\def\row#1/#2!{#1_{\IfStrEq{#2}{}{n}{#2}} & \dynkin{#1}{#2}\\}
\newlist{thmlist}{enumerate}{1}
\setlist[thmlist]{label=(\roman{thmlisti}), ref=\thethm.(\roman{thmlisti}),noitemsep}
\allowdisplaybreaks \numberwithin{equation}{section}
\newtheorem{thm}{Theorem}[section]
\newtheorem*{thm*}{Theorem}
\newtheorem{prop}[thm]{Proposition}
\newtheorem{lem}[thm]{Lemma}
\theoremstyle{definition}
\newtheorem{defn}[thm]{Definition}
\theoremstyle{definition}
\newtheorem{rmk}[thm]{Remark}
\newtheorem{exmp}[thm]{Example}
\newcommand{\q}{\mathfrak{q}}
\newcommand{\m}{\mathfrak{m}}
\newcommand{\C}{\mathbb{C}}
\newcommand{\R}{\mathbb{R}}
\newcommand{\Z}{\mathbb{Z}}
\newcommand{\Ol}{\mathcal{O}}
\newcommand{\Q}{\mathbb{Q}}
\newcommand{\Ext}{\mathcal{E}\textit{xt}}
\newcommand{\Pic}{\textnormal{Pic}}
\newcommand{\ch}{\textnormal{ch}}
\newcommand{\td}{\textnormal{td}}
\newcommand{\NS}{\textnormal{NS}}
\newcommand{\rk}{\textnormal{rk}}
\newcommand{\Deg}{\textnormal{deg}}
\newcommand{\Dim}{\textnormal{dim}}
\newcommand{\Sym}{\textnormal{Sym}}
\newcommand{\id}{\textnormal{id}}
\begin{document}
	
	\title[Hodge classes of type $(2, 2)$ on Hilbert squares of projective K3 surfaces]{Hodge classes of type $(2, 2)$ on Hilbert squares of projective K3 surfaces}
	
	\author{Simone Novario}
	\address{Universit\`a degli Studi di Milano, Dipartimento di Matematica ``F. Enriques'', Via Cesare Saldini 50, 20133 Milano, Italy. \newline \indent Universit\'e de Poitiers, Laboratoire de Math\'ematiques et Applications, T\'el\'eport 2, Boulevard Marie et Pierre Curie, 86962 Futuroscope Chasseneuil, France.}
	
	\email{simone.novario@unimi.it, simone.novario@math.univ-poitiers.fr}
	
	\begin{abstract}
		We give a basis for the vector space generated by rational Hodge classes of type $(2, 2)$ on the Hilbert square of a projective K3 surface general in its rank, which is a subspace of the singular cohomology ring with rational coefficients: we use Nakajima operators and an algebraic model developed by Lehn and Sorger as main tools. We then obtain a basis of the lattice generated by integral Hodge classes of type $(2, 2)$ on the Hilbert square of a projective K3 surface general in its rank: we exploit lattice theory, a theorem by Qin and Wang and a result by Ellingsrud, Göttsche and Lehn.
	\end{abstract}
	
	\maketitle
	
	% only for working version
	%\tableofcontents
	
	%%%%%%%%%%%%%%%%%%%%%%%%%%%%%%%%%%%%%%%%%%
	%%%%%%%%%%%%%%%%%%%%%%%%%%%%%%%%%%%%%%%%%%
	%%%%%%%%%%%%%%%%%%%%%%%%%%%%%%%%%%%%%%%%%%
	
	\section{Introduction}\label{intro} 
	Irreducible holomorphic symplectic (IHS) manifolds are a generalization to higher dimensions of K3 surfaces. The interest in these varieties has been increasing thanks to the Beauville--Bogomolov decomposition theorem: up to a finite étale cover, any compact Kähler manifold with trivial first Chern class is the product of a complex torus, Calabi--Yau manifolds and IHS manifolds, see \cite[Théorème 2]{beauville1983varietes}. In this paper we deal with Hilbert schemes of $2$ points on a K3 surface, also known as \emph{Hilbert squares of K3 surfaces}, the first example of IHS manifold other than K3 surfaces to be found, see \cite{fujiki1987rham}: if $S$ is a K3 surface, we denote by $S^{[2]}$ its Hilbert square. There exists a quadratic form on the second cohomology group $H^2(S^{[2]}, \Z)$, called \emph{Beauville--Bogomolov--Fujiki} (BBF) form and denoted by\,\,$q_{S^{[2]}}$, so that $(H^2(S^{[2]}, \Z), q_{S^{[2]}})$ is a lattice.
	
	If $S$ is a K3 surface, we denote by $T(S)=\NS(S)^{\perp}$ the \emph{transcendental lattice} of $S$, where $\NS(S)$ is the Néron--Severi group of $S$ and the orthogonal is taken with respect to the intersection product. Moreover, $T(S)_{\Q}=T(S)\otimes \Q$ is a rational Hodge structure of weight $2$, and we denote by
	\begin{equation*}
	E_S:=\text{Hom}_0(T(S)_{\Q}, T(S)_{\Q})
	\end{equation*}
	the algebra of endomorphisms on $T(S)_{\Q}$ of weight $0$. See \cite[$\S 3$]{huybrechts2016lectures} for details on Hodge structures. Zahrin showed in \cite{Zarhin1983} that $E_S$ is either $\Q$, or a totally real field or a CM field. We say that $S$ is \emph{general in its rank}, or simply \emph{general}, if $E_S \cong \Q$. As remarked in \cite{van2008real}, see also \cite[$\S 1.1$]{elsenhans2016point}, K3 surfaces which are not general gives a locus of positive codimension in the analytic moduli space of K3 surfaces of fixed Picard rank, unless the Picard rank is $20$, in which case $E_S$ is always a CM field.
	
	In this paper we study the $\Q$-vector space of \emph{rational Hodge classes} of type $(2, 2)$ and the lattice of \emph{integral Hodge classes} of type $(2, 2)$ on the Hilbert square of a general projective K3 surface $S$, defined respectively as
	\begin{equation*}
		H^{2,2}(S^{[2]}, \Q):=H^4(S^{[2]}, \Q) \cap H^{2,2}(S^{[2]}), \qquad H^{2,2}(S^{[2]}, \Z):=H^4(S^{[2]}, \Z) \cap H^{2,2}(S^{[2]}).
	\end{equation*}
	The integral bilinear form considered on $H^{2,2}(S^{[2]}, \Z)$ is the cup product. Hodge classes are usually studied in the context of the so-called \emph{Hodge conjecture}: this states that given a smooth complex projective variety $Y$, the subspace of $H^{2k}(Y, \Q)$ generated by algebraic cycles, i.e., classes which are obtained as fundamental cohomological classes $[Z]$ of subvarieties $Z \subset Y$, coincides with $H^{k, k}(Y, \Q)$, which is by definition the set $H^{2k}(Y, \Q) \cap H^{k, k}(Y)$ of \emph{rational Hodge classes} of type $(k, k)$. Similarly one defines \emph{integral Hodge classes} of type $(k, k)$ as the elements which belong to the set $H^{k,k}(Y, \Z):=H^{2k}(Y, \Z) \cap H^{k,k}(Y)$. We now present the main result of this paper, which gives a basis of the lattice $H^{2,2}(S^{[2]}, \Z)$, where $S$ is a general projective K3 surface whose Picard group is known, cf.\,Theorem \ref{thm H^2,2(X, Z) generico}.
	\begin{thm*}
		Let $S$ be a general projective K3 surface and let $\{b_1, \dots , b_r\}$ be a basis of $\Pic(S)$. Then the lattice $H^{2,2}(S^{[2]}, \Z)$ is odd. Moreover, $\textnormal{rk}(H^{2,2}(S^{[2]}, \Z))=\frac{(r+1)r}{2}+r+2$, and a basis of $H^{2,2}(S^{[2]}, \Z)$ is the following:
		\begin{equation} \label{eq basis introduction}
			\left\{b_ib_j, \frac{b_i^2-b_i\delta}{2}, \frac{1}{8}\left(\delta^2+\frac{1}{3}c_2(S^{[2]})\right), \delta^2\right\}_{1 \le i \le j \le r,}
		\end{equation}
	where $c_2(S^{[2]}) \in H^{2,2}(S^{[2]}, \Z)$ is the second Chern class of $S^{[2]}$.
	\end{thm*}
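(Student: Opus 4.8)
The plan is to carry out every computation inside the description of $H^*(S^{[2]},\Q)$ given by Nakajima creation operators and the Lehn--Sorger algebraic model, and to pass to the geometric generators $\delta$, $c_2(S^{[2]})$ and the pullbacks of $\Pic(S)$ only at the very end. Via the Nakajima basis the weight-$4$ Hodge structure $H^4(S^{[2]},\Q)$ splits as $\Sym^2 H^2(S,\Q)\oplus\bigl(H^0(S,\Q)\otimes H^4(S,\Q)\bigr)\oplus \q_2\bigl(H^2(S,\Q)\bigr)|0\rangle$, and using the orthogonal decomposition $H^2(S,\Q)=\NS(S)_\Q\oplus T(S)_\Q$ one reads off the rational $(2,2)$-classes in each summand. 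Because $S$ is general, $E_S\cong\Q$, so $T(S)_\Q$ is a simple Hodge structure with no nonzero rational $(1,1)$-class and with $\mathrm{Hom}_{\mathrm{HS}}\bigl(T(S)_\Q,T(S)_\Q\bigr)=\Q$; consequently $\Sym^2\NS(S)_\Q$ consists entirely of Hodge classes (a subspace of dimension $\tfrac{(r+1)r}{2}$), the summands $\NS(S)_\Q\otimes T(S)_\Q$ and $\q_2\bigl(T(S)_\Q\bigr)|0\rangle$ contain no rational $(2,2)$-class, $\Sym^2 T(S)_\Q$ contains a one-dimensional space of such classes (the one corresponding to the polarization, represented by $\sum_i\q_1(t_i)\q_1(t^i)|0\rangle$ for dual bases $\{t_i\},\{t^i\}$ of $T(S)$), while the $r$ classes $\q_2\bigl(\NS(S)_\Q\bigr)|0\rangle$ and the single class $\q_1(1)\q_1([x])|0\rangle$ (for $x\in S$ a point) are Hodge. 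Adding up, $\dim_\Q H^{2,2}(S^{[2]},\Q)=\tfrac{(r+1)r}{2}+r+2$, and one obtains an explicit $\Q$-basis written in Nakajima coordinates.

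Next I would pass to the basis \eqref{eq basis introduction}. Writing $b_i$ also for the image of $b_i$ under the natural embedding $\Pic(S)\hookrightarrow\Pic(S^{[2]})$, the ring structure of the Lehn--Sorger model expresses $b_ib_j$, $b_i\delta$ and $\delta^2$ in the Nakajima basis, so that $\{b_ib_j\}_{i\le j}$ recovers $\Sym^2\NS(S)_\Q$ and the classes $\{b_i^2-b_i\delta\}$ together with $\Sym^2\NS(S)_\Q$ recover the $\q_2\bigl(\NS(S)_\Q\bigr)$-part. The Ellingsrud--G\"ottsche--Lehn formula for the Chern classes of $S^{[2]}$ gives $c_2(S^{[2]})$ in Nakajima coordinates, and a short computation then identifies $\tfrac18\bigl(\delta^2+\tfrac13 c_2(S^{[2]})\bigr)$ with $\q_1(1)\q_1([x])|0\rangle$, that is with the fundamental class of the surface $\{\xi\in S^{[2]} : x\in\Supp(\xi)\}$; together with $\delta^2$ this accounts for the remaining $H^0\otimes H^4$ and $\Sym^2 T$ directions. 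A rank count then shows that the four families in \eqref{eq basis introduction} form a $\Q$-basis of $H^{2,2}(S^{[2]},\Q)$.

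It remains to establish the integral assertions. That $b_ib_j$ and $\delta^2$ are integral is clear, since $b_i,\delta\in H^2(S^{[2]},\Z)$; the class $\tfrac18\bigl(\delta^2+\tfrac13 c_2(S^{[2]})\bigr)$ is integral because it is the fundamental class of a subvariety; and for $\tfrac12(b_i^2-b_i\delta)$ one uses that for a very ample $L$ on $S$ the tautological rank-$2$ bundle $L^{[2]}$ on $S^{[2]}$ satisfies $c_1(L^{[2]})=L-\delta$ and $c_2(L^{[2]})=[C^{[2]}]$ (the fundamental class of the Hilbert square of a general $C\in|L|$), so that $\tfrac12(L^2-L\delta)$ differs from the integral class $[C^{[2]}]$ by an integral multiple of $\tfrac18\bigl(\delta^2+\tfrac13 c_2(S^{[2]})\bigr)$ and is therefore integral; one then reduces an arbitrary $b_i\in\Pic(S)$ to this case by writing it as a difference of very ample classes. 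Oddness follows from the Fujiki relation on $S^{[2]}$ together with the Chern numbers $\int_{S^{[2]}}c_2(S^{[2]})^2=828$ and $\int_{S^{[2]}}c_2(S^{[2]})\alpha^2=30\,q_{S^{[2]}}(\alpha)$, which give $\bigl(\tfrac18(\delta^2+\tfrac13 c_2(S^{[2]}))\bigr)^2=1$, so that $H^{2,2}(S^{[2]},\Z)$ contains a class of odd square. Finally, to see that \eqref{eq basis introduction} generates the \emph{whole} lattice $H^{2,2}(S^{[2]},\Z)=H^4(S^{[2]},\Z)\cap H^{2,2}(S^{[2]},\Q)$ rather than a finite-index sublattice $\Lambda$, I would use that $H^4(S^{[2]},\Z)$ is unimodular (Poincar\'e duality plus torsion-freeness of $H^*(S^{[2]},\Z)$): then the discriminant of $H^{2,2}(S^{[2]},\Z)$ equals, up to sign, that of its orthogonal complement in $H^4(S^{[2]},\Z)$, a lattice whose isometry class is controlled — through the Nakajima/Lehn--Sorger model and the theorem of Qin--Wang on the integral cohomology ring — by the transcendental lattice $T(S)$; computing the Gram matrix of $\Lambda$ for the cup product (via Fujiki and the Chern numbers above) and matching its discriminant with the one just described forces $\Lambda=H^{2,2}(S^{[2]},\Z)$.

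The step I expect to be the real obstacle is this last one: a priori $H^{2,2}(S^{[2]},\Z)$ could be strictly larger than the sublattice spanned by \eqref{eq basis introduction}, and excluding that requires pinning down the integral structure exactly. This is precisely where the generality of $S$ is essential, since it forces the transcendental part of $H^4(S^{[2]})$ to be as small as possible so that no further integral $(2,2)$-classes can occur, and where the Qin--Wang integral generators and the Ellingsrud--G\"ottsche--Lehn Chern class formula do the actual work; the translation step of the second paragraph also demands keeping careful track of the exact rational coefficients, which is the reason the basis contains the specific combination $\tfrac18\bigl(\delta^2+\tfrac13 c_2(S^{[2]})\bigr)$ rather than $c_2(S^{[2]})$ itself.
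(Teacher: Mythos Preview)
Your outline through the oddness argument is correct and matches the paper: the Lehn--Sorger Hodge decomposition gives the rank, the EGL formula identifies $\tfrac18\bigl(\delta^2+\tfrac13 c_2(S^{[2]})\bigr)=\q_1(1)\q_1(x)|0\rangle$, and this class has self-intersection $1$. (Your tautological-bundle argument for the integrality of $\tfrac12(b_i^2-b_i\delta)$ is more than needed: in Nakajima coordinates this class equals $\tfrac12\bigl(\int_S b_i^2\bigr)\,\q_1(1)\q_1(x)|0\rangle+\m_{1,1}(b_i)|0\rangle$, and integrality is immediate since $\m_{1,1}(b_i)|0\rangle$ is a Qin--Wang generator and $\int_S b_i^2\in 2\Z$.)

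The gap is the saturation step, and your discriminant-matching plan does not close it. To compute $\mathrm{disc}\bigl((H^{2,2})^\perp\bigr)$ you would first have to identify the lattice $(H^{2,2})^\perp=H^4(S^{[2]},\Z)\cap H^{2,2}(S^{[2]},\Q)^\perp$, which is the same problem you are trying to solve on the other side; ``controlled by $T(S)$'' is not a computation, the Nakajima summands are not cup-orthogonal, $\delta^2$ mixes all of $H^2(S,\Z)$, and even for $r=1$ the discriminant is $84t^3$, a number your outline never produces. The paper's argument is different and avoids discriminants: extend $\{b_1,\dots,b_r\}$ to a $\Z$-basis $\{b_1,\dots,b_{22}\}$ of $H^2(S,\Z)$, write $\delta^2$ in the associated Qin--Wang basis with coefficients $\sigma_{i,j}$ on $\q_1(b_i)\q_1(b_j)|0\rangle$, and use unimodularity of $H^2(S,\Z)$ together with a parity trick (evaluate $\langle\delta^2,xy\rangle$ for $x\in T(S)$, $y\in H^2(S,\Z)$ with $\int_S xy=1$, and note the $\sigma_{i,i}$ are even) to force some $\sigma_{l,k}$ with $k>r$ to be nonzero. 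Replacing $\q_1(b_l)\q_1(b_k)|0\rangle$ by $\delta^2$ in the Qin--Wang basis gives a finite-index sublattice $L$ whose basis already contains your proposed generators, with $H^4(S^{[2]},\Z)/L$ cyclic and generated by $\q_1(b_l)\q_1(b_k)|0\rangle$; a second application of the same trick shows this generator is not a rational $(2,2)$-class, whence $L\cap H^{2,2}(S^{[2]},\Q)=H^{2,2}(S^{[2]},\Z)$.
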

	Clearly the elements in (\ref{eq basis introduction}) give a basis also for the $\Q$-vector space $H^{2,2}(S^{[2]}, \Q)$: a basis in terms of Nakajima operators is described in Theorem \ref{thm dim H2,2(X, Q) per K3 qualsiasi}. Topics discussed in this paper are related to \cite{shen2016hyperkahler}, where Shen studies Hyperkähler manifolds of Jacobian type.
	
	The paper is organised as follows. In Section \ref{section lattices} we recall some notions of lattice theory. In Section \ref{section IHS gen} we give the definition of IHS manifold, together with general properties. In Section \ref{section IHS K32 type} we present some useful results which hold for an IHS manifold $X$ of $K3^{[2]}$-type, in particular the relation between the BBF bilinear form and the cup product between elements in $H^4(X, \Q)$, and the definition of the dual $q_X^{\vee} \in H^{2,2}(X, \Q)$ of the BBF quadratic form. In Section\,\,\ref{section Nakajima} we introduce Nakajima operators, following \cite{nakajima1997heisenberg} and \cite{lehn1999chern}, and the algebraic model developed by Lehn and Sorger in \cite{lehn2003cup}. In Section\,\,\ref{section rational Hodge}, using Nakajima operators and the model of Lehn--Sorger, we prove Theorem\,\,\ref{thm dim H2,2(X, Q) per K3 qualsiasi}, which gives a basis of the $\Q$-vector space of rational Hodge classes of type $(2, 2)$ on the Hilbert square of a general projective K3 surface whose Picard group is known: the basis is presented in terms of Nakajima operators. In Section\,\,\ref{section second Chern}, using a result by Ellingsrud, Göttsche and Lehn in \cite{ellingsrud2001cobordism}, we obtain an explicit description of the second Chern class of the Hilbert square of a projective K3 surface depending on Nakajima operators, cf.\,Proposition\,\,\ref{prop c2(X) con operatori Nakajima}. We use this description in Section \ref{section integral Hodge}, where we pass to study integral Hodge classes of type $(2, 2)$ on the Hilbert square of a general projective K3 surface. The main result of this section is Theorem \ref{thm H^2,2(X, Z) generico}, which describes a basis for $H^{2,2}(S^{[2]}, \Z)$ for a general projective K3 surface\,\,$S$: the strategy of the proof is to combine Proposition \ref{prop c2(X) con operatori Nakajima} with a result by Qin and Wang in \cite{qin2005integral}, together with the property that the lattice $H^2(S, \Z)$ is unimodular.
	
	A \emph{generic K3 surface of degree} $2t$ is a general projective K3 surface $S_{2t}$ whose Picard group is generated by the class of an ample divisor $H$ with $H^2=2t$ with respect to the intersection form. This paper is based on Chapter 3 of the author's PhD thesis, where Theorem \ref{thm H^2,2(X, Z) generico} is used to show that Hilbert squares of general K3 surfaces of degree\,\,$2t$, with $t \neq 2$, admitting an ample divisor $D$ with $q_X(D)=2$ are double EPW sextics, see \cite{o2006irreducible} for the definition, where\,\,$q_X$ represents the BBF quadratic form on $X$. Proofs and details that will be omitted in this paper can be found in the author's PhD thesis, see \cite{novario2021ths}.
	%"But that's another story for another day..."
	
	\textbf{Acknowledgements:} I thank Samuel Boissière for his constant support and for useful clarifications on the theory of Nakajima operators. I thank Bert van Geemen for many useful discussions. I thank Simon Brandhorst for precious clarifications about lattice theory and the software Sage. I deeply thank Olivier Benoist and Daniel Huybrechts for useful comments, summarised in Remark \ref{rmk general K3}, on a first draft of this paper, which improved the statement of the main Theorem. I also thank Pietro Beri and Ángel David Ríos Ortiz for many useful discussions.
	\section{Lattices} \label{section lattices}
	In this section we recall the most important definitions and results of lattice theory that we need. General references are \cite{nikulin1980integral} and \cite{conway2013sphere}.
	\begin{defn} \label{def lattice}
		A \emph{lattice} $L$ is a free $\Z$-module of finite rank together with a symmetric bilinear form	$b: L \times L \rightarrow \Z$. We denote by $q: L \rightarrow \Z$ the quadratic form defined by $q(x):=b(x, x)$ for every $x \in L$.
	\end{defn}
If $L$ is a lattice of rank $n$ and $\mathcal{B}:=\{e_1, \dots , e_n\}$ is a $\Z$-basis of $L$, we call \emph{Gram matrix} of $L$ associated to $\mathcal{B}$ the following symmetric matrix:
	\begin{equation*}
		\begin{pmatrix}
			b(e_1, e_1) & \cdots & b(e_1, e_n) \\
			\vdots & \ddots & \vdots \\
			b(e_n, e_1) & \cdots & b(e_n, e_n)
		\end{pmatrix}.
	\end{equation*}
	A \emph{non-degenerate} lattice is a lattice $L$ of rank $n$ such that for any non-zero $l \in L$ there exists $l^{\prime} \in L$ such that $b(l, l^{\prime}) \neq 0$, equivalently, $\text{det}(G) \neq 0$ if $G$ is a Gram matrix of $L$. A lattice $L$ is \emph{even} if $b(l, l) \in 2\Z$ for every $l \in L$, and \emph{odd} if it is not even. A \emph{sublattice} of a lattice $L$ is a free submodule $L^{\prime} \subseteq L$ with symmetric bilinear form $b^{\prime}:=b|_{L^{\prime} \times L^{\prime}}$. A sublattice $L^{\prime} \subseteq L$ is \emph{primitive} if $L/L^{\prime}$ is a free module. The \emph{direct sum} of two lattices $L_1$ and $L_2$ is by definition the lattice $L_1 \oplus L_2$ whose bilinear form is $b(v_1+v_2, w_1+w_2):=b_1(v_1, w_1)+b_2(v_2, w_2)$ for every $v_1, w_1 \in L_1$ and $v_2, w_2 \in L_2$, where $b_1$ and $b_2$ are the bilinear forms of $L_1$ and $L_2$ respectively.
	
	For a lattice $L$ of rank $n$ we write $L_{\R}:=L \otimes_{\Z} \R$ and we extend $\R$-bilinearly the bilinear form $b$ to $L_{\R}$, similarly we extend $q$ to $L_{\R}$. If the lattice is non-degenerate, the \emph{signature} of $L$ is the signature $(l_{(+)}, l_{(-)})$ of the quadratic form on $L_{\R}$. A non-degenerate lattice is \emph{positive definite} if $l_{(-)}=0$, similarly it is \emph{negative definite} if $l_{(+)}=0$, while it is \emph{indefinite} if $l_{(+)}, l_{(-)} \neq 0$. 
	
	 The \emph{determinant} of a lattice $L$ is the determinant of a Gram matrix $G$ of the lattice, and the \emph{discriminant} of $L$ is $\text{disc}(L):=|\text{det}(G)|$. A \emph{unimodular} lattice is a lattice $L$ such that $\text{disc}(L)=1$: if $L$ is a unimodular lattice, for every $x \in L$ there exists $y \in L$ such that $b(x, y)=1$. Let $k$ be an integer: the easiest example of lattice is $\langle k \rangle$, which is the rank one lattice $L=\Z e$ with bilinear form $b(e, e)=k$. Other two important examples of lattices are the \emph{hyperbolic lattice}, which is denoted by $U$, and $E_8(-1)$: the former is the unique unimodular lattice of rank $2$ and signature $(1, 1)$, the latter is an even unimodular lattice of signature $(0, 8)$. The following two matrices are Gram matrices respectively for $U$ and $E_8(-1)$:
	\begin{equation} \label{eq Gram matrices}
		\begin{pmatrix}
			0 & 1 \\
			1 & 0
		\end{pmatrix}, \qquad
		\begin{pmatrix}
		-2 & 1 &  &  &  &  &  &  \\
		1 & -2 & 1 &  &  &  &  &  \\
		& 1 & -2 & 1 &  &  &  & 1 \\
		&  & 1 & -2 & 1 &  &  &  \\
		&  &  & 1 & -2 & 1 &  &  \\
		&  &  &  & 1 & -2 & 1 &  \\
		&  &  &  &  & 1 & -2 &  \\
		&  & 1 &  &  &  &  & -2 \\
		\end{pmatrix}.
	\end{equation}
	A \emph{morphism of lattices} $\varphi: L \rightarrow L^{\prime}$ between two lattices $L$ and $L^{\prime}$ whose bilinear forms are respectively $b$ and $b^{\prime}$ is a morphism of $\Z$-modules such that for every $l_1, l_2 \in L$ we have $b(l_1, l_2)=b^{\prime}(\varphi(l_1), \varphi(l_2))$. Morphisms between two non-degenerate lattices are necessarily injective. We call \emph{isometry} a bijective morphism of lattices. A lattice $L$ \emph{embeds primitively} in a lattice $L^{\prime}$ if there is a morphism $\varphi: L \rightarrow L^{\prime}$ such that $\varphi(L)$ is a primitive sublattice of $L^{\prime}$. 
	\section{Generalities on IHS manifolds} \label{section IHS gen}
	In this section we recall basics on irreducible holomorphic symplectic manifolds.
	\begin{defn}
		An \emph{irreducible holomorphic symplectic} (IHS) \emph{manifold} is a simply connected compact complex Kähler manifold $X$ such that $H^0(X, \Omega^2_X)$ is generated by a non-degenerate holomorphic $2$-form, called \emph{symplectic form}.
	\end{defn}
	The dimension of an IHS manifold is necessarily even as a consequence of the existence of a symplectic form. The Enriques--Kodaira classification of compact complex surfaces shows that the only IHS manifolds of dimension $2$ are K3 surfaces. If\,\,$X$ is an IHS manifold, then the $\C$-vector space $H^0(X, \Omega^p_X)$ is zero if $p$ is odd, and it is generated by\,\,$\sigma^{\frac{p}{2}}$ if $0 \le p \le \Dim(X)$ is even, where\,\,$\sigma$ is a symplectic form, see \cite[Proposition 3]{beauville1983varietes}. The Picard group $\Pic(X)$ is isomorphic to the Néron--Severi group $\NS(X)=H^{1,1}(X)_{\R} \cap H^2(X, \Z)$, and this embeds in the second cohomology group $H^2(X, \Z)$. By the universal coefficient theorem, the second singular cohomology group $H^2(X, \Z)$ is torsion free. Moreover, by a result due to Beauville, Bogomolov and Fujiki, $H^2(X, \Z)$ can be equipped with a non-degenerate integral quadratic form, denoted by $q_X$ and called \emph{Beauville--Bogomolov--Fujiki} (BBF) form, see \cite{beauville1983varietes} and \cite{fujiki1987rham} for details: in particular $(H^2(X, \Z), q_X)$ has a structure of even lattice of signature $(3, b_2(X)-3)$, where $b_2(X)$ is the second Betti number of\,\,$X$. For K3 surfaces the BBF form coincides with the intersection form. The \emph{transcendental lattice} of an IHS manifold $X$ is defined as $T(X):=(\NS(X))^{\perp}$: the orthogonal is taken with respect to the BBF form.
	
	Let $S$ be a K3 surface: we denote by $S^{[n]}$ the \emph{Hilbert scheme of} $n$ \emph{points on} $S$, which is the scheme which parametrises zero-dimensional closed subschemes of length $n$ on $S$. By \cite[Théorème 3]{beauville1983varietes} the variety $S^{[n]}$ is an IHS manifold of dimension $2n$. If $S^{(n)}$ is the quotient of $S^n=S \times \dots \times S$ by the symmetric group of $n$ elements, the morphism $\rho: S^{[n]} \rightarrow S^{(n)}$ defined by $\rho([\xi])=\sum_x l(\Ol_{\xi, x}) \cdot x$ is called \emph{Hilbert--Chow morphism}, see for instance \cite{iversen2006linear}. In particular $\rho$ is a desingularisation of $S^{(n)}$, whose singular locus is the so-called diagonal, i.e., the set of cycles $p_1+ \dots + p_n$ such that there exist $i$ and $j$ with $i \neq j$ and $p_i=p_j$. The pre-image of the diagonal in $S^{(n)}$ is an irreducible divisor $E$ on $S^{[n]}$, and there exists a primitive class $\delta \in \Pic(S^{[n]})$ such that $2\delta=[E]$. An important fact is the following: there exists a primitive embedding of lattices 
	\begin{equation*}
		i: H^2(S, \Z) \hookrightarrow H^2(S^{[n]}, \Z)
	\end{equation*}
	such that $H^2(S^{[n]}, \Z)=i(H^2(S, \Z)) \oplus \Z\delta$, and $q_{S^{[n]}}(\delta)=-2(n-1)$. For a K3 surface $S$ there exists an isometry of lattices $H^2(S, \Z) \cong U^{\oplus 3} \oplus E_8(-1)^{\oplus 2}$, see for instance \cite[$\S \text{VII.3}$]{barth2015compact}, in particular $H^2(S, \Z)$ is an even unimodular lattice of signature $(3, 19)$. Hence there is an isometry of lattices
	\begin{equation*}
		H^2(S^{[n]}, \Z) \cong U^{\oplus 3} \oplus E_8(-1)^{\oplus 2} \oplus \langle -2(n-1) \rangle,
	\end{equation*}
	similarly $\Pic(S^{[n]}) =i(\Pic(S)) \oplus \Z \delta$: see \cite[$\S 6$]{beauville1983varietes} for details. Moreover, the singular cohomology ring $H^*(S^{[n]}, \Z)$ for a K3 surface $S$ and $n \ge 1$ is torsion free by \cite[Theorem 1]{markman2007integral}. When $n=2$, we call $S^{[2]}$ the \emph{Hilbert square of} $S$. An IHS manifold which is deformation equivalent to the Hilbert square of a K3 surface is said to be of $K3^{[2]}$-\emph{type}.
	
	The other known examples of IHS manifolds up to deformation equivalence are \emph{generalised Kummer varieties}, see \cite[$\S 7$]{beauville1983varietes}, an isolated example of dimension $10$ and second Betti number $b_2=24$, see \cite{o1999desingularized}, and an isolated example of dimension $6$ and second Betti number $b_2=8$, see \cite{o2003new}. We do not discuss details on these examples since in this paper we deal only with Hilbert squares of K3 surfaces.
	\section{Generalities on IHS manifolds of $K3^{[2]}$-type} \label{section IHS K32 type}
	Let $X$ be an IHS manifold of dimension $4$ of $K3^{[2]}$-type. In this section we recall some useful properties from \cite[$\S 2$]{o2008irreducible}, in particular the link between the intersection pairing on $H^4(X, \Q)$ and the BBF form on $H^2(X, \Q)$.
	
	First of all, we state the following corollary of Verbitsky's results in \cite{verbitsky1996cohomology}, obtained by Guan in \cite{guan2001betti}, see also \cite[Corollary 2.5]{o2010higher}. We denote by $b_i(X)$ the $i$-th Betti number of $X$.
	
	\begin{prop} \label{prop Guan}
		Let $X$ be an IHS manifold of dimension $4$. Then $b_2(X) \le 23$. If equality holds then $b_3(X)=0$ and the map
		\begin{equation*}
			\textnormal{Sym}^2H^2(X, \Q) \rightarrow H^4(X, \Q)
		\end{equation*}
	induced by the cup product is an isomorphism. In particular this happens when $X$ is an IHS fourfold of $K3^{[2]}$-type.
	\end{prop}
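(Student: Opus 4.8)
The plan is to deduce everything from Verbitsky's structure theorem on the cohomology of IHS manifolds, as made explicit by Guan in the four-dimensional case. First I would recall the relevant input: by Verbitsky's results in \cite{verbitsky1996cohomology}, the subalgebra of $H^*(X,\Q)$ generated by $H^2(X,\Q)$ is isomorphic, as a graded algebra, to the quotient of $\Sym^\bullet H^2(X,\Q)$ by the ideal generated by the classes $\alpha^{n+1}$ for $\alpha\in H^2(X,\Q)$ with $q_X(\alpha)=0$, where $2n=\Dim(X)$; here $n=2$. In particular, in degree $4$ the natural map $\Sym^2 H^2(X,\Q)\to H^4(X,\Q)$ is \emph{injective}, since the defining relations of the Verbitsky component live in degree $\ge 2n+2 = 6$. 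This already gives the inequality: $\dim\Sym^2 H^2(X,\Q)=\binom{b_2(X)+1}{2}\le b_4(X)$, and combining this with Guan's numerical constraints on the Betti numbers of IHS fourfolds (\cite{guan2001betti}) forces $b_2(X)\le 23$, with the two sides of the Poincaré-polynomial bookkeeping matching only in the extremal cases.

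Next, assuming $b_2(X)=23$, I would invoke Guan's classification of the possible Betti numbers: the only numerical solution with $b_2=23$ has $b_3(X)=0$ and $b_4(X)=\binom{24}{2}=276=\dim\Sym^2 H^2(X,\Q)$. Since the map $\Sym^2 H^2(X,\Q)\to H^4(X,\Q)$ is injective by the previous paragraph and the source and target now have equal (finite) dimension, it is an isomorphism. Finally, for $X$ of $K3^{[2]}$-type one has $b_2(X)=23$ (the lattice $H^2(X,\Z)\cong U^{\oplus 3}\oplus E_8(-1)^{\oplus 2}\oplus\langle -2\rangle$ recalled in Section \ref{section IHS gen} has rank $23$), so the hypothesis of the equality case is met and the conclusion applies; alternatively this last point follows directly from the Hilbert-square case, where $H^*(S^{[2]},\Q)$ is computed from $H^*(S,\Q)$ and one checks the Verbitsky component exhausts $H^4$.

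The main obstacle is not in the formal deduction but in the input from \cite{guan2001betti}: establishing that $b_2(X)=23$ forces $b_3(X)=0$ and pins down $b_4(X)$ exactly requires the full list of admissible Betti numbers of IHS fourfolds, which rests on Verbitsky's theory together with Salamon's relations among the Betti numbers and positivity of the relevant characteristic numbers. I would treat this as a citation rather than reprove it; the only genuinely self-contained step is the injectivity of $\Sym^2 H^2\to H^4$, which is immediate once the degree of the Verbitsky relations is identified, and the rank count $b_2=23$ for $K3^{[2]}$-type, which is read off the lattice isomorphism already stated.
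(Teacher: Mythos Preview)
Your proposal is correct and follows exactly the path the paper indicates: the paper does not give its own proof of Proposition~\ref{prop Guan} but states it as a corollary of Verbitsky's results in \cite{verbitsky1996cohomology} combined with Guan's work \cite{guan2001betti}, referring also to \cite[Corollary~2.5]{o2010higher}. Your sketch---injectivity of $\Sym^2H^2(X,\Q)\to H^4(X,\Q)$ from the degree of the Verbitsky relations, then Guan's Betti-number constraints to force $b_2\le 23$ and, at equality, $b_3=0$ and $b_4=276$, followed by a dimension count---is precisely how one unpacks those citations, so there is nothing to add.
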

We denote by $\langle \, \cdot \, , \, \cdot \, \rangle$ the intersection pairing on the singular cohomology group $H^4(X, \Z)$ induced by the cup product and defined by $\langle \alpha, \beta \rangle :=\int_X \alpha\beta$. We use the same notation $\langle \, \cdot \, , \, \cdot \, \rangle$ for the $\Q$-bilinear extension of the intersection pairing above to $H^4(X, \Q)$, obtaining a $\Q$-valued intersection pairing on $\Sym^2H^2(X, \Q)$. Let $X$ be an IHS manifold of $K3^{[2]}$-type: then the intersection pairing $\langle \, \cdot \, , \, \cdot \, \rangle$ can be described in terms of the $\Q$-extension of the BBF bilinear form on $H^2(X, \Q)$ thanks to the following Proposition, see \cite[Remark 2.1]{o2008irreducible}.
\begin{prop}[O'Grady] \label{prop rmk 2.1 OG}
Let $X$ be an IHS fourfold of $K3^{[2]}$-type. The intersection pairing $\langle \, \cdot \, , \, \cdot \, \rangle$ defined above coincides with the bilinear form on $\textnormal{Sym}^2H^2(X, \Q)$ given by
\begin{equation*}
	\langle \alpha_1\alpha_2, \alpha_3\alpha_4 \rangle =(\alpha_1, \alpha_2)(\alpha_3, \alpha_4)+(\alpha_1, \alpha_3)(\alpha_2, \alpha_4)+(\alpha_1, \alpha_4)(\alpha_2, \alpha_3)
\end{equation*}
for every $\alpha_1, \alpha_2, \alpha_3, \alpha_4 \in H^2(X, \Q)$, where $(\,\cdot \, , \, \cdot \, )$ denotes the BBF bilinear form on $H^2(X, \Q)$.
\end{prop}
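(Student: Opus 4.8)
The plan is to obtain the formula by polarising the Beauville--Fujiki relation. Recall that for an IHS fourfold $X$ of $K3^{[2]}$-type one has $\int_X \alpha^4 = 3\, q_X(\alpha)^2$ for every $\alpha \in H^2(X, \Q)$, the Fujiki constant being equal to $3$ in this case (see \cite{fujiki1987rham} and \cite{beauville1983varietes}); in the notation of the statement this reads $\langle \alpha^2, \alpha^2 \rangle = 3\,(\alpha, \alpha)^2$. By Proposition \ref{prop Guan} the cup product map $\Sym^2 H^2(X, \Q) \to H^4(X, \Q)$ is an isomorphism, so the intersection pairing $\langle \, \cdot \, , \, \cdot \, \rangle$ indeed defines a symmetric $\Q$-bilinear form on $\Sym^2 H^2(X, \Q)$, and by bilinearity it suffices to compare the two sides of the asserted identity on pure products $\alpha_1\alpha_2$ and $\alpha_3\alpha_4$.

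Next I would consider the $\Q$-multilinear map $F \colon H^2(X, \Q)^{\times 4} \to \Q$ given by $F(\alpha_1, \alpha_2, \alpha_3, \alpha_4) := \int_X \alpha_1\alpha_2\alpha_3\alpha_4$ together with the $\Q$-multilinear map $G \colon H^2(X, \Q)^{\times 4} \to \Q$ given by
\[
	G(\alpha_1, \alpha_2, \alpha_3, \alpha_4) := (\alpha_1, \alpha_2)(\alpha_3, \alpha_4) + (\alpha_1, \alpha_3)(\alpha_2, \alpha_4) + (\alpha_1, \alpha_4)(\alpha_2, \alpha_3).
\]
Both $F$ and $G$ are symmetric in their four arguments: $F$ because the cup product on even-degree cohomology is graded-commutative, and $G$ directly from the symmetry of the BBF bilinear form. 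Evaluating on the diagonal $\alpha_1 = \alpha_2 = \alpha_3 = \alpha_4 = \alpha$ gives $F(\alpha, \alpha, \alpha, \alpha) = \int_X \alpha^4$ and $G(\alpha, \alpha, \alpha, \alpha) = 3\,(\alpha, \alpha)^2 = 3\, q_X(\alpha)^2$, so the Beauville--Fujiki relation says exactly that $F$ and $G$ have the same restriction to the diagonal.

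To conclude I would invoke uniqueness of polarisation over a field of characteristic $0$: a symmetric $d$-linear form $T$ is recovered from its diagonal restriction $P(\alpha) := T(\alpha, \dots, \alpha)$ through
\[
	d!\, T(\alpha_1, \dots, \alpha_d) = \sum_{\emptyset \neq I \subseteq \{1, \dots, d\}} (-1)^{d - |I|}\, P\!\left(\textstyle\sum_{i \in I} \alpha_i\right),
\]
so two symmetric multilinear forms with the same diagonal restriction are equal. Applying this with $d = 4$ to $F$ and $G$ gives $F = G$, which unwinds to $\langle \alpha_1\alpha_2, \alpha_3\alpha_4 \rangle = \int_X \alpha_1\alpha_2\alpha_3\alpha_4 = G(\alpha_1, \alpha_2, \alpha_3, \alpha_4)$, as asserted. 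No step is genuinely difficult here; the only point deserving care is the value $3$ of the Fujiki constant for $K3^{[2]}$-type, which one can either cite or check directly on $S^{[2]}$ using the decomposition $H^2(S^{[2]}, \Z) = i(H^2(S, \Z)) \oplus \Z\delta$ recalled in Section \ref{section IHS gen}, the value $q_{S^{[2]}}(\delta) = -2$, and the classical self-intersection numbers of the class $\delta$.
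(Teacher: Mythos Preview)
Your argument is correct. The paper does not actually supply a proof of this proposition: it is stated with attribution to O'Grady and a reference to \cite[Remark 2.1]{o2008irreducible}, so there is nothing to compare against line by line. The polarisation of the Beauville--Fujiki relation $\int_X \alpha^4 = 3\,q_X(\alpha)^2$ that you carry out is precisely the standard route to this identity, and every step (symmetry of $F$ and $G$, equality on the diagonal, uniqueness of the symmetric multilinear extension in characteristic $0$, and the value $3$ of the Fujiki constant for the $K3^{[2]}$ deformation type) is sound.
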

Let $q_X$ be the BBF quadratic form on $X$ and $\{e_1, \dots , e_{23}\}$ be a basis of $H^2(X, \Q)$. Let $\{e_1^{\vee}, \dots , e_{23}^{\vee}\}$ be the dual basis in $H^2(X, \Q)^{\vee}$, i.e., $e_i^{\vee}(e_j)=\delta_{i,j}$. Then we have 
\begin{equation*}
	q_X=\displaystyle\sum_{i,j}g_{i,j}e_i^{\vee}\otimes e_j^{\vee}, \qquad q_X^{\vee}=\displaystyle\sum_{i,j}m_{i,j}e_ie_j,
\end{equation*}
where $g_{i,j}:=(e_i, e_j)$, the matrix $(g_{i,j})$ is symmetric and $(m_{i,j})=(g_{i,j})^{-1}$. As shown in \cite[Proposition 2.2]{o2008irreducible}, the products  $\langle q_X^{\vee}, \alpha \rangle$ for every $\alpha \in H^4(X, \Q)$ can be computed exploiting the BBF bilinear form on $H^2(X, \Q)$.
\begin{prop}[O'Grady] \label{prop O'Grady int q_Xvee}
Let $X$ be an IHS fourfold of $K3^{[2]}$-type. Let $\langle \, \cdot \, , \, \cdot \, \rangle$ be the bilinear form described in Proposition $\ref{prop rmk 2.1 OG}$. Then $\langle \, \cdot \, , \, \cdot \, \rangle$ is non-degenerate and 
\begin{equation*}
	\begin{array}{l}
		\langle q_X^{\vee}, \alpha \beta \rangle = 25(\alpha, \beta) \qquad \text{for all}\,\,\alpha, \beta \in H^2(X, \Q), \\[1ex]
		\langle q_X^{\vee}, q_X^{\vee} \rangle =23 \cdot 25.
	\end{array}
\end{equation*} 
\end{prop}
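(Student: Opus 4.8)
The plan is to reduce everything to linear algebra on $\Sym^2 H^2(X, \Q)$, using Proposition \ref{prop rmk 2.1 OG} to turn every cup-product computation into a polynomial expression in the BBF bilinear form. Fix a basis $\{e_1, \dots, e_{23}\}$ of $H^2(X, \Q)$ with Gram matrix $(g_{i,j}) = ((e_i, e_j))$ and inverse $(m_{i,j})$, so that by definition $q_X^\vee = \sum_{i,j} m_{i,j} e_i e_j$. The first step is to compute $\langle q_X^\vee, \alpha\beta \rangle$ for arbitrary $\alpha, \beta \in H^2(X, \Q)$. Writing $\alpha = \sum_k a_k e_k$ and $\beta = \sum_\ell b_\ell e_\ell$, I would expand
\begin{equation*}
\langle q_X^\vee, \alpha\beta \rangle = \sum_{i,j} m_{i,j} \langle e_i e_j, \alpha\beta \rangle = \sum_{i,j} m_{i,j}\bigl[(e_i, e_j)(\alpha, \beta) + (e_i, \alpha)(e_j, \beta) + (e_i, \beta)(e_j, \alpha)\bigr]
\end{equation*}
using the formula of Proposition \ref{prop rmk 2.1 OG}. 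The three terms are handled separately: the first gives $(\alpha, \beta) \sum_{i,j} m_{i,j} g_{i,j} = (\alpha, \beta) \cdot \operatorname{tr}((m_{i,j})(g_{i,j})) = 23(\alpha,\beta)$ since $(m_{i,j})(g_{i,j})$ is the $23 \times 23$ identity matrix. For the second term, note $\sum_i m_{i,j}(e_i, \alpha) = \sum_{i,k} m_{i,j} g_{i,k} a_k = a_j$, i.e. summing against $(m_{i,j})$ inverts the Gram matrix and recovers the coordinate; hence $\sum_{i,j} m_{i,j}(e_i,\alpha)(e_j,\beta) = \sum_j a_j (e_j, \beta) = (\alpha, \beta)$. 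The third term is identical by symmetry, also $(\alpha, \beta)$. Adding, $\langle q_X^\vee, \alpha\beta \rangle = (23 + 1 + 1)(\alpha, \beta) = 25(\alpha, \beta)$, which is the first claim.

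For the second claim, $\langle q_X^\vee, q_X^\vee \rangle$, the cleanest route is to reuse what was just proved rather than expanding a fourfold sum. Writing $q_X^\vee = \sum_{i,j} m_{i,j} e_i e_j$ in the second slot and applying bilinearity together with the identity $\langle q_X^\vee, e_i e_j \rangle = 25(e_i, e_j) = 25 g_{i,j}$ just established, I get
\begin{equation*}
\langle q_X^\vee, q_X^\vee \rangle = \sum_{i,j} m_{i,j} \langle q_X^\vee, e_i e_j \rangle = 25 \sum_{i,j} m_{i,j} g_{i,j} = 25 \cdot \operatorname{tr}\bigl((m_{i,j})(g_{i,j})\bigr) = 25 \cdot 23,
\end{equation*}
again because $(m_{i,j}) = (g_{i,j})^{-1}$ and the cohomology has rank $b_2(X) = 23$ by Proposition \ref{prop Guan}. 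Non-degeneracy of $\langle\,\cdot\,,\,\cdot\,\rangle$ follows from Proposition \ref{prop Guan}: the cup product induces an isomorphism $\Sym^2 H^2(X, \Q) \xrightarrow{\sim} H^4(X, \Q)$, and Poincaré duality on the compact manifold $X$ makes the intersection pairing on $H^4(X, \Q)$ perfect, hence non-degenerate; transporting along the isomorphism gives non-degeneracy on $\Sym^2 H^2(X, \Q)$.

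I do not expect a serious obstacle here — the statement is essentially a bookkeeping exercise once Propositions \ref{prop Guan} and \ref{prop rmk 2.1 OG} are in hand. The only point requiring a little care is making sure the "collapse" identities $\sum_{i,j} m_{i,j} g_{i,j} = 23$ and $\sum_i m_{i,j}(e_i,\alpha) = a_j$ are applied correctly, i.e. tracking which index is summed against the inverse Gram matrix; both are immediate consequences of $(m_{i,j})(g_{i,j}) = \mathrm{Id}_{23}$. One should also note that all of this is basis-independent: $q_X^\vee$ as defined does not depend on the chosen basis $\{e_i\}$, since it is the image of the BBF form under the canonical isomorphism $H^2(X,\Q) \cong H^2(X,\Q)^\vee$ composed with $H^2(X,\Q)^{\otimes 2} \to \Sym^2 H^2(X,\Q) \hookrightarrow H^4(X,\Q)$, so the numbers $25$ and $23 \cdot 25$ are genuine invariants of $K3^{[2]}$-type fourfolds.
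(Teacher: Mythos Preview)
Your proof is correct. The paper itself does not give a proof of this proposition but simply attributes it to O'Grady, citing \cite[Proposition 2.2]{o2008irreducible}; your direct expansion via Proposition~\ref{prop rmk 2.1 OG} together with the trace identity $\sum_{i,j} m_{i,j} g_{i,j} = \operatorname{tr}(\mathrm{Id}_{23}) = 23$ is exactly the standard computation behind O'Grady's statement, and your appeal to Poincar\'e duality plus Proposition~\ref{prop Guan} for non-degeneracy is the expected argument.
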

O'Grady has shown in \cite[$\S 3$]{o2008irreducible} that $q_X^{\vee}$ is a rational multiple of $c_2(X)$, the second Chern class of the tangent bundle of $X$, in particular it is an element of $H^{2,2}(X, \Q)$, i.e., it is a rational Hodge class of type $(2, 2)$. 
\begin{prop}[O'Grady] \label{prop O'Grady c_2(X)}
Let $X$ be an IHS fourfold of $K3^{[2]}$-type. Then $q_X^{\vee} \in H^{2,2}(X, \Q)$, i.e., $q_X^{\vee}$ is a rational Hodge class of $X$ of type $(2, 2)$, and
\begin{equation*}
	\frac{6}{5}q_X^{\vee}=c_2(X) \in H^{2,2}(X, \Z).
\end{equation*}
Moreover, $\frac{2}{5}q_X^{\vee} \in H^{2,2}(X, \Z)$ is an integral Hodge class of $X$ of type $(2, 2)$.
\end{prop}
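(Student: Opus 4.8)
The plan is to handle the three assertions separately: that $q_X^\vee$ is a Hodge class of type $(2,2)$ is essentially formal, the identity $\frac{6}{5}q_X^\vee=c_2(X)$ follows from a proportionality argument plus one numerical input, and the integrality of $\frac{2}{5}q_X^\vee$ requires an explicit computation on a Hilbert square. For the first assertion I would use that the BBF form is a morphism of weight-two Hodge structures on $H^2(X,\Q)$; equivalently, its $\C$-linear extension pairs $H^{p,q}$ non-trivially with $H^{p',q'}$ only when $(p+p',q+q')=(2,2)$, so $H^{2,0}$ and $H^{0,2}$ are isotropic and orthogonal to $H^{1,1}$. Choosing a $\Q$-basis of $H^2(X,\Q)$ and passing to an adapted complex basis $(\sigma,\bar\sigma,\omega_1,\dots,\omega_{21})$ with $\omega_i\in H^{1,1}$, the Gram matrix $(g_{i,j})$ of the BBF form and hence its inverse $(m_{i,j})$ acquire a block form in which $\sigma,\bar\sigma$ are coupled only to each other and the $\omega_i$ only among themselves; therefore $q_X^\vee=\sum_{i,j}m_{i,j}e_ie_j$, read inside $\textnormal{Sym}^2H^2(X,\C)\cong H^4(X,\C)$ via Proposition \ref{prop Guan}, is a combination of $\sigma\bar\sigma$ and of the products $\omega_i\omega_j$, all of Hodge type $(2,2)$. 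Since $(g_{i,j})$ is rational so is $(m_{i,j})$, whence $q_X^\vee\in H^4(X,\Q)\cap H^{2,2}(X)=H^{2,2}(X,\Q)$; that $c_2(X)\in H^{2,2}(X,\Z)$ is the standard fact about Chern classes of the holomorphic tangent bundle.

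For the identity, I would first show $c_2(X)$ is proportional to $q_X^\vee$. The symmetric bilinear form $(\alpha,\beta)\mapsto\int_Xc_2(X)\,\alpha\beta$ on $H^2(X,\Q)$ is invariant under the monodromy group (parallel transport along loops induces orientation-preserving diffeomorphisms, which preserve both the cup product and $c_2$ of the tangent bundle), and $H^2(X,\Q)$ is an irreducible module over that group by the description of the monodromy of IHS manifolds (see Verbitsky \cite{verbitsky1996cohomology} and Markman); hence $\int_Xc_2(X)\,\alpha\beta=c\,(\alpha,\beta)$ for all $\alpha,\beta\in H^2(X,\Q)$, with $c$ depending only on the deformation type. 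As $\langle\,\cdot\,,\,\cdot\,\rangle$ is non-degenerate on $\textnormal{Sym}^2H^2(X,\Q)$ and $\langle q_X^\vee,\alpha\beta\rangle=25(\alpha,\beta)$ by Proposition \ref{prop O'Grady int q_Xvee}, this forces $c_2(X)=\frac{c}{25}q_X^\vee$. To compute $c$ I would specialise to $X=S^{[2]}$, which is legitimate since $c_2(X)$, $q_X$ and therefore $q_X^\vee$ are flat sections of the local system $R^4\pi_*\Q$ over the connected deformation family of $K3^{[2]}$-type manifolds, so the identity, once verified in one fibre, holds everywhere. From $\langle q_X^\vee,q_X^\vee\rangle=23\cdot 25$ one gets $\int_Xc_2(X)^2=\frac{23c^2}{25}$; the Chern number $\int_{S^{[2]}}c_2(S^{[2]})^2=828$, which follows from Ellingsrud--G\"ottsche--Lehn \cite{ellingsrud2001cobordism}, then gives $c^2=900$, and positivity of $\int_Xc_2(X)\,\alpha^2$ for a K\"ahler class $\alpha$ (valid because $c_1(X)=0$) pins down $c=30$. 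Hence $\frac{6}{5}q_X^\vee=c_2(X)$, and it lies in $H^{2,2}(X,\Z)$ because $c_2(X)$ does.

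For the last statement, note that $\frac{2}{5}q_X^\vee=\frac{1}{3}c_2(X)$ by the previous step, so the claim amounts to $c_2(X)$ being divisible by $3$ in $H^4(X,\Z)$, a group which is torsion-free \cite{markman2007integral} and on which the cup product is unimodular by Poincar\'e duality on the compact oriented $8$-manifold $X$. Reducing once more to $X=S^{[2]}$, I would establish this from an explicit expression for $c_2(S^{[2]})$, obtained from \cite{ellingsrud2001cobordism} in the spirit of Proposition \ref{prop c2(X) con operatori Nakajima}, written against an integral basis of $H^4(S^{[2]},\Z)$ of Qin--Wang type \cite{qin2005integral}: such a formula displays $c_2(S^{[2]})$ as $3$ times an integral class.

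The soft parts --- that $q_X^\vee$ is of type $(2,2)$ and that $c_2(X)$ is a scalar multiple of it --- are short; the main obstacle is the explicit intersection theory on $S^{[2]}$ needed both to determine the scalar $\frac{6}{5}$ (for which the single Chern number $\int_{S^{[2]}}c_2^2=828$ suffices) and, more seriously, to prove the $3$-divisibility of $c_2(X)$ underlying the integral statement, where one must genuinely invoke the cobordism-type computations of Ellingsrud--G\"ottsche--Lehn together with the integral lattice structure of $H^4(S^{[2]},\Z)$.
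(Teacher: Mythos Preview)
The paper does not supply its own proof of this proposition: it is attributed to O'Grady with a reference to \cite[\S 3]{o2008irreducible} and stated without argument, so there is nothing in-paper to compare against directly. Your sketch is essentially the standard route and is correct in outline: the Hodge-type claim is formal from the compatibility of the BBF form with the Hodge decomposition; proportionality of $c_2(X)$ to $q_X^\vee$ follows from monodromy invariance of $c_2$ together with the fact that the monodromy-invariant part of $\textnormal{Sym}^2H^2(X,\Q)\cong H^4(X,\Q)$ is the line spanned by $q_X^\vee$; and the constant is fixed by the Chern number $\int_{S^{[2]}}c_2^2=828$.

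Two remarks. First, the positivity $\int_Xc_2(X)\,\alpha^2>0$ for a K\"ahler class $\alpha$ is not an immediate consequence of $c_1(X)=0$ alone: one needs Yau's theorem to obtain a Ricci-flat metric and then the L\"ubke/Chern--Weil inequality, which gives $\int_Xc_2\wedge\omega^{n-2}\ge 0$ with equality only in the flat case; an IHS fourfold of $K3^{[2]}$-type has holonomy $\mathrm{Sp}(2)$ and is not flat, so strict positivity holds. Second, for the integrality of $\frac{2}{5}q_X^\vee$ you invoke the explicit Nakajima expression of $c_2(S^{[2]})$, i.e.\ Proposition~\ref{prop c2(X) con operatori Nakajima}. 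This is logically sound --- that proposition rests only on the EGL formula and Nakajima calculus, not on the present statement --- but it is a forward reference that inverts the paper's order: the paper instead \emph{uses} Proposition~\ref{prop O'Grady c_2(X)} (taken from O'Grady) to derive the Nakajima expression for $\frac{2}{5}q_X^\vee$ in Theorem~\ref{thm 2/5q con op Nakajima}. So your argument is self-contained only once the later EGL computation is in hand; if you want a proof that stands at this point of the exposition, you would need to import O'Grady's original argument for the $3$-divisibility of $c_2(X)$.
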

	\section{Nakajima operators and the Lehn--Sorger model} \label{section Nakajima}
In this section we introduce Nakajima operators following \cite{nakajima1997heisenberg} and \cite{lehn1999chern}, and the algebraic model developed by Lehn and Sorger in \cite{lehn2003cup}. 

Let $S$ be a smooth complex projective surface and let $S^{[n]}$ be the Hilbert scheme of $n$ points on $S$ for any integer $n>0$. We define
\begin{equation*}
	\mathbb{H}_n^S:=\displaystyle\oplus_{i=0}^{4n}H^i(S^{[n]}, \Q), \qquad \mathbb{H}^S:=\displaystyle\oplus_{n \ge 0} \mathbb{H}^S_n.
\end{equation*}
The unit of the $\Q$-algebra, with cup product, $\mathbb{H}^S_0 \cong \Q$ is called \emph{vacuum vector} and it is denoted by $|0\rangle$. If $1_{S^{[n]}}$ denotes the unit for the cup product in $\mathbb{H}_n^S$ for every $n$, the unit in $\mathbb{H}^S$ for the cup product is given by $|1\rangle :=\sum_{n \ge 0} 1_{S^{[n]}}$. The space $\mathbb{H}^S$ is double graded by $(n, i)$: we say that $n$ is the \emph{conformal weight} and $i$ is the \emph{cohomological degree}, denoted by $|\,\cdot\,|$. Let $\mathfrak{f} \in \text{End}(\mathbb{H}^S)$ be a linear operator. We say that $\mathfrak{f}$ is \emph{homogeneous of bidegree} $(\nu, \iota)$ if for any $n$ and $i$ we have $\mathfrak{f}(H^i(S^{[n]}, \Q)) \subset H^{i+\iota}(S^{[n+\nu]}, \Q)$. The \emph{commutator} of two homogeneous operators $\mathfrak{f}, \mathfrak{g} \in \text{End}(\mathbb{H}^S)$ is defined by
\begin{equation*}
	[\mathfrak{f}, \mathfrak{g}]:=\mathfrak{f} \circ \mathfrak{g} - (-1)^{|\mathfrak{f}|\cdot |\mathfrak{g}|} \mathfrak{g} \circ \mathfrak{f}.
\end{equation*}
We now define an intersection pairing $\langle \, \cdot \, , \, \cdot \, \rangle$ on $\mathbb{H}^S$. First of all, fix an integer $n \ge 0$ and let $\alpha, \beta \in \mathbb{H}^S_n$: we set
\begin{equation} \label{eq int pairing}
	\langle \alpha, \beta \rangle:=\displaystyle\int_{S^{[n]}}\alpha \beta
\end{equation}
Note that $\langle \alpha, \beta \rangle =0$ if $|\alpha|+|\beta| \neq 4n$. Then $\langle \, \cdot \, , \, \cdot \, \rangle$ extends naturally to a non-degenerate graded symmetric bilinear form on $\mathbb{H}^S$, which we denote again by $\langle \, \cdot \, , \, \cdot \, \rangle$. If $\mathfrak{f} \in \text{End}(\mathbb{H}^S)$ is a homogeneous operator, we define the \emph{adjoint operator} $\mathfrak{f}^{\dagger}$ as the homogeneous operator characterised by the relation
\begin{equation*}
	\langle \mathfrak{f}(\alpha), \beta \rangle = (-1)^{|\mathfrak{f}|\cdot |\alpha|}\langle \alpha, \mathfrak{f}^{\dagger}(\beta)\rangle.
\end{equation*}
Following \cite[$\S 1.2$]{lehn1999chern}, we define an irreducible subvariety $S^{[n, n+k]}$ of $S^{[n]}\times S \times S^{[n+k]}$ for any integers $n \ge 0, k > 0$. Let $X^{[n, n+k]} \subset S^{[n]} \times S^{[n+k]}$ be the uniquely determined closed subscheme with the property that any morphism $f=(f_1, f_2): T \rightarrow S^{[n]} \times S^{[n+k]}$ from an arbitrary variety $T$ factors through $X^{[n, n+k]}$ if and only if the following holds:
\begin{equation*}
	(f_1 \times \text{id}_S)^{-1}(\Xi_n^S) \subset (f_2 \times \text{id}_S)^{-1}(\Xi^S_{n+k}),
\end{equation*}
where $\Xi_n^S \subset S^{[n]} \times S$ is the universal family of subschemes parametrised by $S^{[n]}$. Closed points in $X^{[n, n+k]}$ corresponds to pairs $(\xi, \xi^{\prime})$ of subschemes with $\xi \subseteq \xi^{\prime}$. Then one obtains a morphism similar to the Hilbert--Chow morphism:
\begin{equation*}
	\rho: X^{[n, n+k]} \rightarrow \text{Sym}^kS.
\end{equation*}
We set $X_0^{[n, n+k]}:=\rho^{-1}(\Delta)$, where $\Delta \subset \Sym^kS$ is the small diagonal, and we consider on $X_0^{[n, n+k]}$ the reduced induced subscheme structure. Set-theoretically we can identify $X_0^{[n, n+k]}$ with the following subset of $S^{[n]}\times S \times S^{[n+k]}$:
\begin{equation*}
	X_0^{[n, n+k]}:=\{(\xi, x, \xi^{\prime})\,|\, \xi \subseteq \xi^{\prime}\,\,\text{and}\,\,\text{Supp}(\mathcal{I}_{\xi}/\mathcal{I}_{\xi^{\prime}})=x\},
\end{equation*}
where $\mathcal{I}_{\xi}$ is the ideal sheaf of $\xi$. We define $S^{[n, n+k]}$ as
\begin{equation*}
	S^{[n, n+k]}:=\overline{\{(\xi, x, \xi^{\prime}) \in X_0^{[n, n+k]}\, | \, l(\xi_x)=0\}},
\end{equation*}
where the closure is the Zariski closure. Then $S^{[n, n+k]}$ is an irreducible subvariety of $S^{[n]} \times S \times S^{[n+k]}$ of complex dimension $2n+k+1$ by \cite[Lemma 1.1]{lehn1999chern}. Consider the following diagram
\begin{equation} \label{diagram Nakajima projections}
	\begin{tikzcd}
		& S^{[n]} \times S \times S^{[n+k]} \arrow{dl}[swap]{\varphi}  \arrow[d, "\rho"] \arrow[dr, "\psi"] & \\ 
		S^{[n]} & S & S^{[n+k]},
	\end{tikzcd}
\end{equation}
where $\varphi$, $\rho$ and $\psi$ are the projections respectively on $S^{[n]}$, $S$ and $S^{[n+k]}$.
\begin{defn} \label{defn Nakajima operators}
	Let $S$ be a smooth complex projective surface. The \emph{Nakajima creation operators}, known also as \emph{Heisenberg operators}, are defined as follows: for $\alpha \in H^*(S, \Q)$ and $x \in H^*(S^{[n]}, \Q)$ we set for $k > 0$
	\begin{equation} \label{eq defn Nakajima op}
		\q_k: H^*(S, \Q) \rightarrow \text{End}(\mathbb{H}^S), \qquad \q_k(\alpha)(x):=\psi_*\left( PD^{-1} \left[ S^{[n, n+k]} \right] \cdot \varphi^*(x) \cdot \rho^*(\alpha) \right),
	\end{equation}
where $[S^{[n, n+k]}] \in H_{4n+2k+2}(S^{[n]} \times S \times S^{[n+k]}, \Q)$ is the fundamental class of $S^{[n, n+k]}$, we denote by $PD$ the Poincaré duality, the dot is the cup product, and $\psi_*$ is the Gysin homomorphism, i.e., $\psi_*=PD^{-1}\psi_*PD$, where $\psi_*$ in the right-hand side is the pushforward in homology. By convention $\q_0=0$, and the \emph{Nakajima annihilation operators} are defined as
\begin{equation*}
	\q_k(\alpha):=(-1)^{-k}\q_{-k}(\alpha)^{\dagger} \qquad \text{for all}\,\,k<0.
\end{equation*}
\end{defn}
Note that the unit $|1\rangle \in \mathbb{H}^S$ can be represented as $|1\rangle =\sum_{n \ge 0} 1_{S^{[n]}}=\text{exp}(\q_1(1_S))|0\rangle$, which gives
\begin{equation} \label{formula 1sn Nakajima}
	1_{S^{[n]}}=\frac{1}{n!}\q_1(1)^n|0\rangle.
\end{equation}
The following commutation formula was obtained by Nakajima in \cite{nakajima1997heisenberg}.
\begin{thm}[Nakajima] \label{thm Nakajima}
	The operators $\q_i$ satisfy the following commutation formula:
	\begin{equation*}
		[\q_i(\alpha), \q_j(\beta)] =i \cdot \delta_{i+j, 0} \cdot \displaystyle\int_S \alpha\beta \cdot \textnormal{id}_{\mathbb{H}^S},
	\end{equation*}
where $\delta$ is the Kronecker delta.
\end{thm}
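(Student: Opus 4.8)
The plan is to prove the relation directly from the definition of the $\q_k$ as correspondences, composing them and keeping careful track of the underlying geometry and of the signs; this is essentially the argument of \cite{nakajima1997heisenberg}. Since $\q_0 = 0$ by convention, the graded commutator is graded-antisymmetric, and the annihilation operators $\q_{-k}(\alpha)$, $k>0$, are defined up to a sign depending on $k$ as the adjoints of the creation operators $\q_k(\alpha)$, taking adjoints of $[\q_i(\alpha),\q_j(\beta)]$ exchanges the roles of $(i,j)$ with $(-j,-i)$; combining this with graded-antisymmetry, it suffices to treat the two cases $i,j>0$ (where I expect $[\q_i(\alpha),\q_j(\beta)]=0$) and $i>0>j$ (where I expect a nonzero answer only when $j=-i$).

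For $i,j>0$, both composites $\q_i(\alpha)\circ\q_j(\beta)$ and $\q_j(\beta)\circ\q_i(\alpha)$ act on $H^*(S^{[n]},\Q)$ through correspondences in $S^{[n]}\times S^{[n+i+j]}$ supported, over the locus of nested pairs $\xi\subseteq\xi''$, on those for which $\xi''/\xi$ splits into a colength-$i$ punctual piece at a point $x$ and a colength-$j$ punctual piece at a point $y$. I would argue on this incidence variety that adding the colength-$i$ chunk at $x$ and then the colength-$j$ chunk at $y$ produces exactly the same cycle as performing these operations in the opposite order, and that the sign $(-1)^{|\q_i(\alpha)|\,|\q_j(\beta)|}$ appearing in the commutator is precisely the one making the two contributions cancel. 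This gives $[\q_i(\alpha),\q_j(\beta)]=0$, in agreement with $\delta_{i+j,0}=0$; the case $i,j<0$ then follows by adjunction.

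For $i>0>j$, write $j=-l$ with $l>0$. Both composites $\q_i(\alpha)\circ\q_{-l}(\beta)$ and $\q_{-l}(\beta)\circ\q_i(\alpha)$ map $H^*(S^{[n]},\Q)$ to $H^*(S^{[n+i-l]},\Q)$ and are given by correspondences in $S^{[n]}\times S^{[n+i-l]}$ obtained by composing the incidence cycles and integrating out the intermediate Hilbert scheme and the two copies of $S$ carrying $\alpha$ and $\beta$. Over the open locus where the point $x$ at which the colength-$i$ chunk is created is distinct from the point $y$ at which the colength-$l$ chunk is removed, the two orders of operations again yield the same cycle, so they cancel in the commutator. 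Hence the only possible surviving contribution comes from the diagonal $x=y$: remove a colength-$l$ punctual subscheme at $x$, then create a colength-$i$ punctual subscheme at the same point $x$.

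It remains to evaluate this diagonal contribution, and this is the hard part of the proof. Using that the punctual Hilbert scheme of length-$m$ subschemes of $\C^2$ supported at the origin is irreducible of dimension $m-1$, I would perform a dimension count on the diagonal locus: when $i\neq l$ it is too small to carry a class of the cohomological degree forced by the bidegrees of $\q_i$ and $\q_{-l}$, so the two composites coincide and the commutator vanishes, matching $\delta_{i+j,0}=0$; when $i=l$ the locus has exactly the expected dimension, and its pushforward is a multiple of the class of the diagonal $\Delta_{S^{[n]}}\subset S^{[n]}\times S^{[n]}$, i.e. a multiple of $\textnormal{id}_{H^*(S^{[n]})}$. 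The proportionality factor splits as $\int_S\alpha\beta$ — coming from integrating $\alpha\cdot\beta$ over the diagonal copy of $S$, since both punctual chunks are supported at the single point $x=y$ — times a universal integer depending only on $i$. Computing this integer to be exactly $i$, via the local model on $\C^2$ and an induction on $n$ as in \cite{nakajima1997heisenberg}, together with the final check that the normalization $\q_k(\alpha)=(-1)^{-k}\q_{-k}(\alpha)^{\dagger}$ for $k<0$ produces the correct overall sign, is where the real work lies and is the main obstacle.
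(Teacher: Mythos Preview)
The paper does not give its own proof of this theorem: it is stated as a result of Nakajima, with the citation \cite{nakajima1997heisenberg}, and no argument follows. So there is nothing in the paper to compare your proposal against.

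Your outline is a faithful sketch of the standard correspondence argument from \cite{nakajima1997heisenberg}: reduce via adjunction and graded antisymmetry to the cases $i,j>0$ and $i>0>j$; in the first case the two composed correspondences are supported on the same incidence cycle and cancel; in the second case the difference localizes on the diagonal $x=y$ in $S\times S$, a dimension count kills the contribution unless $i+j=0$, and in the remaining case the pushforward is a multiple of the diagonal class, producing $\int_S\alpha\beta\cdot\mathrm{id}$ times a universal constant. You are right that the determination of this constant is the genuine content; historically Nakajima's original argument showed only that it is nonzero, and the exact value was pinned down separately (e.g.\ via Ellingsrud--Str\o mme's computation or, in the formulation used here, as in \cite{lehn1999chern}). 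If you want a self-contained proof you should either carry out that local computation explicitly or cite where it is done; otherwise your sketch is accurate and matches the approach of the reference the paper invokes.
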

We now introduce the \emph{boundary operator}. Fix a positive integer $n > 0$ and let $\lambda =\{\lambda_1 \ge \lambda_2 \ge \dots \ge \lambda_s > 0\}$ be a partition of $n$, i.e., an $s$-uple of ordered positive integers such that $\lambda_1+\lambda_2+ \dots + \lambda_s=n$. Consider the Hilbert--Chow morphism $\rho: S^{[n]} \rightarrow \text{Sym}^nS$. We set
\begin{equation*}
	\text{Sym}^n_{\lambda}S:=\{\alpha \in \text{Sym}^nS\,|\, \alpha = \displaystyle\sum_{1 \le i \le s} \lambda_ix_i, \, \, x_i \in S \, \, \text{pairwise distinct}\}
\end{equation*}
for a fixed partition $\lambda$. By a theorem of Briançon $S^{[n]}_{\lambda}:=\rho^{-1}(\text{Sym}^n_{\lambda}S)$ is irreducible of dimension $n+s$, see \cite{brianccon1977description}. For example, $S^{[n]}_{(1, 1, \dots 1)}$ is the open subset of $S^{[n]}$ which corresponds to the configuration space of unordered $n$-tuples of pairwise distinct points: it is the only stratum which is open. The only stratum of codimension $1$ is $S^{[n]}_{(2, 1, \dots , 1)}$.
\begin{defn}
	The \emph{boundary} of $S^{[n]}$ for $n \ge 2$ is the irreducible divisor
	\begin{equation*}
		\partial S^{[n]}:=\displaystyle \bigcup_{\lambda \neq (1, 1, \dots , 1)} S^{[n]}_{\lambda}= \overline{S^{[n]}_{(2, 1, \dots , 1)}}.
	\end{equation*}
\end{defn}
\begin{rmk} \label{rmk boundary empty}
	For $n=1$ the boundary $\partial S^{[n]}$ is the empty set.
\end{rmk}
There is a description of the divisor $\partial S^{[n]}$ in sheaf theoretic terms. Consider the universal family of subschemes $\Xi_n \subset S^{[n]} \times S$. Let $p: \Xi_n \rightarrow S^{[n]}$ be the projection on the first factor. Since $p$ is a flat morphism of finite degree $n$, we have that $\Ol_S^{[n]}:=p_*(\Ol_{\Xi_n})$ is a locally free sheaf of rank $n$. Then $[\partial S^{[n]}]=-2c_1(\Ol_S^{[n]})$ by \cite[Lemma 3.7]{lehn1999chern}.

We can now define the boundary operator and the \emph{derivative} of a linear operator $\mathfrak{f} \in \text{End}(\mathbb{H}^S)$.
\begin{defn} \label{Defn boundary op}
	The \emph{boundary operator} $\mathfrak{d}: \mathbb{H^S} \rightarrow \mathbb{H^S}$ is the homogeneous linear map of bidegree $(0, 2)$ given by
	\begin{equation*}
		\mathfrak{d}(x):=c_1(\Ol_S^{[n]}) \cdot x=-\frac{1}{2}[\partial S^{[n]}]\cdot x \qquad \text{for all}\,\, x \in H^*(S^{[n]}, \Q).
	\end{equation*}
For any endomorphism $\mathfrak{f} \in \text{End}(\mathbb{H}^S)$ we define the \emph{derivative} of $\mathfrak{f}$ as
\begin{equation*}
	\mathfrak{f}^{\prime}:=[\mathfrak{d}, \mathfrak{f}]=\mathfrak{d} \circ \mathfrak{f}-\mathfrak{f}\circ \mathfrak{d}.
\end{equation*}
We denote by $\mathfrak{f}^{(n)}$ the higher derivatives.
\end{defn}
The following relation was obtained by Lehn, see \cite[Theorem 3.10, Proposition 3.12]{lehn1999chern}.
\begin{thm}[Lehn] \label{thm main Lehn}
	Let $S$ be a smooth complex projective surface. Then for any integers $n, m$ such that $n+m \neq 0$ and cohomology classes $\alpha, \beta \in H^*(S, \Q)$ we have
	\begin{equation*}
		[\q_n^{\prime}(\alpha), \q_m(\beta)]=-nm\cdot \q_{n+m}(\alpha\beta).
	\end{equation*}
\end{thm}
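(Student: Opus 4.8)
The plan is to deduce the identity from one genuinely geometric computation, exploiting that $\mathfrak{f} \mapsto \mathfrak{f}' = [\mathfrak{d}, \mathfrak{f}]$ is a derivation of $\text{End}(\mathbb{H}^S)$ which interacts transparently with the Heisenberg relations of Theorem \ref{thm Nakajima}. The crucial observation is that the special case $n = 1$, $\alpha = 1$ of the statement reads $[\q_1'(1), \q_m(\gamma)] = -m\,\q_{m+1}(\gamma)$ for $m \neq -1$; solving for $\q_{m+1}$ exhibits every creation operator as an iterated commutator, $\q_n(\gamma) = -\tfrac{1}{n-1}[\q_1'(1), \q_{n-1}(\gamma)]$ for $n \geq 2$, and likewise every annihilation operator is built from $\q_1'(1)$ and a single $\q_{-1}$. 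Hence, once the case $n = \pm 1$ is settled, the general case should follow by induction on $|n|$: differentiate this expression, expand via the Leibniz rule $[\mathfrak{d}, \mathfrak{f}\mathfrak{g}] = \mathfrak{f}'\mathfrak{g} + \mathfrak{f}\mathfrak{g}'$ and the graded Jacobi identity, and substitute the induction hypothesis together with Theorem \ref{thm Nakajima} — the latter being what makes the many cross terms collapse, since $[\q_i(\alpha), \q_j(\beta)]$ is a scalar, and is in fact $0$ whenever $i + j \neq 0$.

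The first step is to prove the base case $[\q_1'(\alpha), \q_m(\beta)] = -m\,\q_{m+1}(\alpha\beta)$ for $m \geq 1$ directly, using the nested Hilbert schemes. One writes $\q_1'(\alpha) = \mathfrak{d}\circ\q_1(\alpha) - \q_1(\alpha)\circ\mathfrak{d}$, inserts the definition (\ref{eq defn Nakajima op}) of the Nakajima operators as pushforwards along the incidence correspondences $S^{[k, k+1]}$, and uses that $\mathfrak{d}$ is cup product with $c_1(\Ol_S^{[k]}) = -\tfrac12[\partial S^{[k]}]$ — which in particular vanishes on $\mathbb{H}^S_0$ and $\mathbb{H}^S_1$, as $\Ol_S^{[0]} = 0$ and $\Ol_S^{[1]} = \Ol_S$. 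The commutator then becomes an intersection of the boundary class with the fundamental classes of the incidence loci, and the natural short exact sequence relating $\Ol_S^{[k]}$, $\Ol_S^{[k+1]}$ and the structure sheaf of the adjoined point along the nested locus identifies the outcome with $-m\,\q_{m+1}(\alpha\beta)$. The cases $m < 0$ with $m \neq -1$, together with the companion base case $n = -1$, then follow by taking adjoints with respect to the pairing (\ref{eq int pairing}): $\mathfrak{d}$ is self-adjoint (being cup product with an even class), and the annihilation operators are, up to sign, the adjoints of the creation operators.

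With the base cases in hand the induction on $|n|$ runs as indicated: for $n \geq 2$ one differentiates $\q_n(\alpha) = -\tfrac{1}{n-1}[\q_1'(1), \q_{n-1}(\alpha)]$ to express $\q_n'(\alpha)$ through $[\q_1''(1), \q_{n-1}(\alpha)]$ and $[\q_1'(1), \q_{n-1}'(\alpha)]$, commutes with $\q_m(\beta)$, expands all brackets by Jacobi, and substitutes the $n = 1$ base case, the induction hypothesis for $[\q_{n-1}'(\alpha), \q_m(\beta)]$, and Theorem \ref{thm Nakajima}; the scalar coefficients so produced must collapse to exactly $-nm$, and the range $n \leq -2$ is symmetric. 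A few degenerate index combinations — in particular those where the recursion would invoke the excluded value $n + m = 0$, leaving a $\q_1$ or $\q_{-1}$ on the right-hand side — require a separate direct argument of the same geometric flavour as the base case. I expect the \textbf{main obstacle} to be precisely that base geometric computation: controlling how the boundary divisor restricts to the incidence correspondence and matching the resulting (excess-)intersection contributions against the intersection-theoretic definition of the operators $\q_k$, which is the heart of Lehn's analysis of $c_1(\Ol_S^{[k]})$. By contrast the inductive part is bookkeeping, although the signs arising from the graded structure (when $\alpha$ or $\beta$ has odd cohomological degree) and from the definition of the annihilation operators must be tracked with care.
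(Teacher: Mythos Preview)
The paper does not prove this theorem: it is quoted from Lehn's paper \cite[Theorem~3.10, Proposition~3.12]{lehn1999chern} and used as a black box, so there is no in-paper proof to compare your proposal against.

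That said, your outline is essentially the architecture of Lehn's original argument. The scheme --- one genuine geometric computation showing how the boundary class interacts with the simplest incidence correspondence, followed by an inductive amplification through the Jacobi identity and the Heisenberg relations of Theorem~\ref{thm Nakajima} --- is exactly how Lehn proceeds. Your identification of the recursion $\q_n(\gamma) = -\tfrac{1}{n-1}[\q_1'(1),\q_{n-1}(\gamma)]$ as the engine of the induction, and of the excess-intersection analysis of $[\partial S^{[k]}]$ restricted to the nested locus as the hard geometric core, is accurate. One organisational remark: when you differentiate the recursion you produce a term involving $\q_1''(1)$, and to close the induction cleanly you must also know how $\q_1''(1)$ commutes with the $\q_m$'s; Lehn sidesteps this by packaging the derivatives into Virasoro-type operators $L_n = \tfrac12\sum_k \q_k\q_{n-k}(\tau_{2*}(-))$ and proving $[\mathfrak d, \q_n] = n L_n + (\text{curvature term})$ once, after which the Jacobi step becomes a one-line Virasoro computation. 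Your bare-hands version works too, but expect the bookkeeping (and the degenerate cases you flag) to be noticeably heavier than the Virasoro route.
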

We now state the \emph{Ellingsrud--Göttsche--Lehn} formula. Let $\Xi_n \subset S^{[n]} \times S$ be the universal family of subschemes parametrised by $S^{[n]}$. By \cite[Theorem 1.9]{lehn1999chern} we have that $S^{[n, n+1]}$ is a smooth irreducible variety isomorphic to the blow-up of $S^{[n]} \times S$ along the universal family, i.e., $S^{[n, n+1]} \cong \text{Bl}_{\Xi_n^S}(S^{[n]} \times S)$. We get the following diagram
\begin{equation} \label{diagram EGL}
	\begin{tikzcd}
		& S^{[n, n+1]} \arrow[bend right=10,swap]{ddl}{\varphi}   \arrow[bend left=10]{ddr}{\rho} \arrow[d, "\sigma"]  \arrow[r, "\psi"] & S^{[n+1]} \\ 
		& S^{[n]} \times S \arrow[dl, "p"] \arrow{dr}[swap]{q} & \\
		S^{[n]} & \Xi_n \arrow[u, hook, "\iota"] & S,
	\end{tikzcd}	
\end{equation} 
where $\sigma$ is the blow-up of $S^{[n]} \times S$ in $\Xi_n^S$, the morphism $\iota$ is the inclusion and the other maps are the projections. If $N$ is the exceptional divisor of $\sigma$, let $\mathcal{L}:=\Ol_{S^{[n, n+1]}}(-N)$. Let $\mathcal{T}_n$ be the tangent bundle $\mathcal{T}S^{[n]}$ and $\omega_S$ be the canonical bundle of $S$. Given a smooth projective variety $X$ and $F \in \text{Coh}(X)$, following \cite[$\S 1.1$]{huybrechts2010geometry} we define the \emph{dual} $F^{\vee}$ as
\begin{equation} \label{eq defn dual}
	F^{\vee}:=\displaystyle\sum_i (-1)^i \Ext^{\,i}(F, \Ol_X).
\end{equation}
Let $K(X)$ be the Grothendieck group of vector bundles on $X$. We denote by $f^!$ the pullback between the Grothendieck groups of a morphism $f$ of smooth projective varieties, see \cite[$\S 15.1$]{fulton2013intersection} for more details. We can now state the \emph{Ellingsrud--Göttsche--Lehn formula}, see \cite[Proposition 2.3]{ellingsrud2001cobordism}.
\begin{prop}[EGL formula] \label{prop EGL}
	Keep notation as above. The following relation holds in $K(S^{[n, n+1]})$:
	\begin{equation*}
		\begin{array}{lll}
			\psi^{!} \mathcal{T}_{n+1} & = & \varphi^{!}\mathcal{T}_n + \mathcal{L} - \mathcal{L}\cdot \sigma^{!}(\Ol^{\vee}_{\Xi_n})+\mathcal{L}^{\vee}\cdot \rho^{!}\omega_S^{\vee} \\[1.2ex]
			& & -\mathcal{L}^{\vee}\cdot \sigma^{!}(\Ol_{\Xi_n})\cdot \rho^{!}\omega^{\vee}_S-\rho^{!}(\Ol_S-\mathcal{T}_S+\omega_S^{\vee}).
		\end{array}
	\end{equation*}
\end{prop}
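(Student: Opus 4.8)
The plan is to reconstruct the argument of Ellingsrud, G\"{o}ttsche and Lehn, whose heart is a single $K$-theoretic formula for the tangent bundle of a Hilbert scheme of points. First I would record that identity: for every $m$, denoting by $\pi_m\colon S^{[m]}\times S\to S^{[m]}$ the first projection and by $\Xi_m\subset S^{[m]}\times S$ the universal subscheme, one has in $K(S^{[m]})$
\[
	[\mathcal{T}_m]=\pi_{m!}\Big([\mathcal{O}_{\Xi_m}]+[\mathcal{O}_{\Xi_m}^{\vee}]-[\mathcal{O}_{\Xi_m}^{\vee}]\cdot[\mathcal{O}_{\Xi_m}]\Big),
\]
where $\mathcal{O}_{\Xi_m}^{\vee}=[R\mathcal{H}om(\mathcal{O}_{\Xi_m},\mathcal{O}_{S^{[m]}\times S})]$ is the derived dual. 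This follows from the deformation theory of ideal sheaves, namely $T_{[\xi]}S^{[m]}=\textnormal{Hom}_{\mathcal{O}_S}(\mathcal{I}_\xi,\mathcal{O}_\xi)$: one writes $[\mathcal{I}_m]=[\mathcal{O}]-[\mathcal{O}_{\Xi_m}]$ in $K(S^{[m]}\times S)$, expands $\pi_{m!}R\mathcal{H}om(\mathcal{I}_m,\mathcal{I}_m)$, and uses relative Serre duality along $\pi_m$, which cancels the constant terms against $\pi_{m!}\mathcal{O}_{S^{[m]}\times S}$. Crucially, the derived dual equals $\mathcal{O}_{\Xi_m}^{\vee}=\mathcal{O}_{\Xi_m}\otimes\omega_S^{\vee}[-2]$ (with $\omega_S$ pulled back from $S$), because the normal bundle of $\Xi_m$ in $S^{[m]}\times S$ has determinant $\omega_S^{\vee}$; this is the source of all the $\omega_S$- and $\mathcal{T}_S$-terms that will appear.

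Next I would transport everything to $S^{[n,n+1]}\cong\textnormal{Bl}_{\Xi_n^S}(S^{[n]}\times S)$, keeping the notation of diagram~\eqref{diagram EGL}: $\sigma$ the blow-up, $N=\sigma^{-1}(\Xi_n)$ the exceptional divisor, $\mathcal{L}=\mathcal{O}(-N)$, $\varphi=p\circ\sigma$, $\rho=q\circ\sigma$. Let $Z_n,Z_{n+1}\subset S^{[n,n+1]}\times S$ be the universal subschemes of lengths $n$ and $n+1$ attached to the flag (so $Z_n$ is the pullback of $\Xi_n$ along $\varphi\times\textnormal{id}_S$, and $Z_{n+1}$ the pullback of $\Xi_{n+1}$ along $\psi\times\textnormal{id}_S$; note $\varphi$ is flat by miracle flatness). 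The tautological exact sequence $0\to\mathcal{I}_{Z_{n+1}}\to\mathcal{I}_{Z_n}\to Q\to 0$ has quotient $Q$ a line bundle supported on the graph $\Gamma_\rho\cong S^{[n,n+1]}$ of $\rho$; a local analysis of the blow-up identifies $Q$ with $\iota_{*}\mathcal{L}$, where $\iota\colon S^{[n,n+1]}\hookrightarrow S^{[n,n+1]}\times S$ is the graph of $\rho$. Equivalently, in $K(S^{[n,n+1]}\times S)$,
\[
	[\mathcal{O}_{Z_{n+1}}]=[\mathcal{O}_{Z_n}]+[\iota_{*}\mathcal{L}].
\]
Two facts I would then use are that $(\varphi\times\textnormal{id}_S)\circ\iota=\sigma$, whence $\iota^{*}\mathcal{O}_{Z_n}=\sigma^{!}\mathcal{O}_{\Xi_n}$, and that the normal bundle of $\iota$ is $\rho^{*}\mathcal{T}_S$, whence $\det N_\iota\cong\rho^{*}\omega_S^{\vee}$.

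The computation is then essentially forced. Apply $\psi^{!}$ to the tangent-bundle identity with $m=n+1$; since $\pi_{n+1}$ is flat, base change gives $\psi^{!}\mathcal{T}_{n+1}=\pi_{!}\big([\mathcal{O}_{Z_{n+1}}]+[\mathcal{O}_{Z_{n+1}}^{\vee}]-[\mathcal{O}_{Z_{n+1}}^{\vee}]\cdot[\mathcal{O}_{Z_{n+1}}]\big)$ with $\pi\colon S^{[n,n+1]}\times S\to S^{[n,n+1]}$. Substituting $[\mathcal{O}_{Z_{n+1}}]=A+B$ with $A=[\mathcal{O}_{Z_n}]$, $B=[\iota_{*}\mathcal{L}]$, and expanding bilinearly, the block $\pi_{!}(A+A^{\vee}-A^{\vee}A)$ is, by base change along $\varphi$, precisely $\varphi^{!}\mathcal{T}_n$. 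Each of the remaining terms $\pi_{!}(B)$, $\pi_{!}(B^{\vee})$, $-\pi_{!}(A^{\vee}B)$, $-\pi_{!}(B^{\vee}A)$, $-\pi_{!}(B^{\vee}B)$ is evaluated using $\pi\circ\iota=\textnormal{id}$, the projection formula, $(\iota_{*}\mathcal{L})^{\vee}=\iota_{*}(\mathcal{L}^{\vee}\otimes\rho^{*}\omega_S^{\vee})$, the self-intersection formula $\iota^{*}\iota_{*}(-)=(-)\otimes\lambda_{-1}(\rho^{*}\Omega_S)$ together with the rank-two identity $\Omega_S\otimes\omega_S^{\vee}\cong\mathcal{T}_S$, and $\iota^{*}\mathcal{O}_{Z_n}=\sigma^{!}\mathcal{O}_{\Xi_n}$; they give respectively $\mathcal{L}$, $\mathcal{L}^{\vee}\cdot\rho^{!}\omega_S^{\vee}$, $-\mathcal{L}\cdot\sigma^{!}(\mathcal{O}_{\Xi_n}^{\vee})$, $-\mathcal{L}^{\vee}\cdot\sigma^{!}(\mathcal{O}_{\Xi_n})\cdot\rho^{!}\omega_S^{\vee}$ and $-\rho^{!}(\mathcal{O}_S-\mathcal{T}_S+\omega_S^{\vee})$, which assemble exactly into the claimed formula.

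\textbf{Main obstacle.} The load-bearing steps are the two geometric identifications, not the formal bookkeeping: that $Q\cong\iota_{*}\mathcal{L}$ with $\mathcal{L}$ exactly $\mathcal{O}(-N)$ and no stray $\rho^{*}$-twist (which requires a genuine understanding of the universal quotient through the tautological sheaves of the blow-up), and the precise normalization of the tangent-bundle identity of the first step --- in particular which cohomological shift and which power of the relative dualizing sheaf of $\Xi_m$ enter $\mathcal{O}_{\Xi_m}^{\vee}$. Once these are fixed, everything else is a careful but routine manipulation in the Grothendieck group via the projection and excess-intersection formulas; the only real danger is that a single misplaced dual, shift, or twist would corrupt several of the six terms simultaneously.
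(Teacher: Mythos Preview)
The paper does not prove this proposition; it simply cites \cite[Proposition 2.3]{ellingsrud2001cobordism} and states the formula. Your proposal is therefore not to be compared against any argument in the paper itself, but against the original source.

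That said, your outline faithfully reconstructs the Ellingsrud--G\"ottsche--Lehn proof. The three ingredients you single out are exactly the ones they use: the $K$-theoretic tangent formula $[\mathcal{T}_m]=\pi_{m!}\big([\mathcal{O}_{\Xi_m}]+[\mathcal{O}_{\Xi_m}^{\vee}]-[\mathcal{O}_{\Xi_m}^{\vee}]\cdot[\mathcal{O}_{\Xi_m}]\big)$, the identification of the tautological quotient on the nested scheme with $\iota_*\mathcal{L}$, and the bilinear expansion followed by projection/excess-intersection formulas. Your term-by-term attribution of $\mathcal{L}$, $\mathcal{L}^{\vee}\cdot\rho^{!}\omega_S^{\vee}$, $-\mathcal{L}\cdot\sigma^{!}(\mathcal{O}_{\Xi_n}^{\vee})$, $-\mathcal{L}^{\vee}\cdot\sigma^{!}(\mathcal{O}_{\Xi_n})\cdot\rho^{!}\omega_S^{\vee}$, and $-\rho^{!}(\mathcal{O}_S-\mathcal{T}_S+\omega_S^{\vee})$ to the five cross-terms is correct, the last via $\lambda_{-1}(\rho^*\Omega_S)\cdot\rho^*\omega_S^{\vee}=\rho^*(\mathcal{O}_S-\mathcal{T}_S+\omega_S^{\vee})$. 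Your ``main obstacle'' paragraph is also accurate: the delicate points are precisely the identification $Q\cong\iota_*\mathcal{L}$ (this is essentially \cite[Lemma 2.1]{ellingsrud2001cobordism}) and the sign/shift bookkeeping in the derived dual $\mathcal{O}_{\Xi_m}^{\vee}$. There is no gap.
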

Let now $S$ be a projective K3 surface. The following theorem by Qin and Wang gives integral basis of $H^2(S^{[2]}, \Z)$ and $H^4(S^{[2]}, \Z)$ in terms of Nakajima operators, see \cite[p.17]{boissiere2013smith}. See \cite[Theorem 5.4, Remark 5.6]{qin2005integral} for a more general statement.
\begin{thm}[Qin--Wang] \label{thm Qin--Wang}
	Let $S$ be a projective K3 surface and $X=S^{[2]}$ be its Hilbert square. Let $\{\alpha_i\}_{i=1, \dots , 22}$ be an integral basis of $H^2(S, \Z)$. Denote by $1 \in H^0(S, \Z)$ the unit and by $x \in H^4(S, \Z)$ the class of a point.
	\begin{enumerate}[label=(\roman*)]
		\item An integral basis of $H^2(X, \Z)$ in terms of Nakajima operators is given by
		\begin{equation*}
			\left\{\frac{1}{2}\q_2(1)|0 \rangle, \q_1(1)\q_1(\alpha_i)| 0 \rangle\right\}_{i=1, \dots , 22.}
		\end{equation*}
		\item An integral basis of $H^4(X, \Z)$ in terms of Nakajima operators is given by the following elements:
		\begin{equation*}
			\begin{array}{c}
				\q_1(1)\q_1(x)|0\rangle, \qquad \q_2(\alpha_i)| 0 \rangle, \qquad \q_1(\alpha_i)\q_1(\alpha_j)| 0 \rangle \, \, \text{with}\,\,i<j, \\[1ex] \m_{1,1}(\alpha_i)| 0 \rangle :=\frac{1}{2}\left( \q_1(\alpha_i)^2-\q_2(\alpha_i)\right) | 0 \rangle,
			\end{array}
		\end{equation*}
		with $i, j=1, \dots , 22$.
	\end{enumerate}
\end{thm}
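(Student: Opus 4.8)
The plan is to derive both statements from three ingredients: the torsion-freeness of $H^*(X,\Z)$, $X=S^{[2]}$ (so that, by Poincaré duality on the closed oriented $8$-manifold $X$, $(H^4(X,\Z),\text{cup product})$ is a \emph{unimodular} lattice); Beauville's decomposition $H^2(X,\Z)=i(H^2(S,\Z))\oplus\Z\delta$; and the Nakajima/Lehn/Lehn--Sorger calculus recalled above. For (i), the incidence description of $\q_1$ identifies $i(\alpha)=\q_1(1)\q_1(\alpha)|0\rangle$ for $\alpha\in H^2(S,\Z)$ (standard; see \cite{boissiere2013smith}), so only $\delta$ must be located. From $1_X=\tfrac12\q_1(1)^2|0\rangle$ (see \eqref{formula 1sn Nakajima}), the vanishing of $\mathfrak{d}$ on conformal weights $0$ and $1$ (so $\mathfrak{d}|0\rangle=0$ and $\q_1'(1)|0\rangle=\mathfrak{d}(1_{S^{[1]}})=0$, using Remark \ref{rmk boundary empty}), and Lehn's relation $[\q_1'(1),\q_1(1)]=-\q_2(1)$ (Theorem \ref{thm main Lehn}), I obtain $\mathfrak{d}(1_X)=\tfrac12\,\q_1'(1)\q_1(1)|0\rangle=-\tfrac12\,\q_2(1)|0\rangle$; since $\mathfrak{d}(1_X)=c_1(\Ol_S^{[2]})=-\delta$ by definition of the boundary operator, $\delta=\tfrac12\q_2(1)|0\rangle$. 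Hence the set in (i) equals $\{\delta,i(\alpha_1),\dots,i(\alpha_{22})\}$, an integral basis of $H^2(X,\Z)$ by Beauville.

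For (ii) I would first settle the rational picture. By Proposition \ref{prop Guan} the cup product gives $\Sym^2H^2(X,\Q)\xrightarrow{\sim}H^4(X,\Q)$, whence $b_4(X)=\binom{24}{2}=276$; and by Nakajima's description of $\mathbb{H}^S$ as a Fock space \cite{nakajima1997heisenberg}, the Nakajima monomials of conformal weight $2$ and cohomological degree $4$ built from the basis $1,\alpha_1,\dots,\alpha_{22},x$ of $H^*(S,\Q)$ — that is, $\q_1(1)\q_1(x)|0\rangle$, the $22$ classes $\q_2(\alpha_i)|0\rangle$, and the $\binom{23}{2}$ classes $\q_1(\alpha_i)\q_1(\alpha_j)|0\rangle$ with $i\le j$ — form a $\Q$-basis of $H^4(X,\Q)$; passing to the list of (ii) amounts to replacing each $\q_1(\alpha_i)^2|0\rangle$ by $\m_{1,1}(\alpha_i)|0\rangle$, a change of $\Q$-basis. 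Next I would check that all $276$ classes in (ii) lie in $H^4(X,\Z)$: $\q_1(1)\q_1(x)|0\rangle$ is the class of the surface $\{\xi\in X:p_0\in\xi\}$; $\q_2(\alpha_i)|0\rangle=j_*\pi^*\alpha_i$ is pushed forward along $j:E=\partial X\hookrightarrow X$ from the $\Pn^1$-bundle $\pi:E\to S$; $\m_{1,1}([C])|0\rangle=[C^{[2]}]$ for $C\subset S$ a smooth curve; and $\q_1(\alpha_i)\q_1(\alpha_j)|0\rangle=i(\alpha_i)\cdot i(\alpha_j)\mp(\alpha_i,\alpha_j)\,\q_1(1)\q_1(x)|0\rangle$ by the Lehn--Sorger model for the cup product on $H^*(X,\Q)$, which one double-checks against Proposition \ref{prop rmk 2.1 OG}. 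The one non-elementary point is the integrality of $\m_{1,1}(\alpha_i)|0\rangle$ when $\alpha_i$ is transcendental, hence not a sum of effective curve classes: the additivity relations $\m_{1,1}(\alpha+\beta)|0\rangle=\m_{1,1}(\alpha)|0\rangle+\m_{1,1}(\beta)|0\rangle+\q_1(\alpha)\q_1(\beta)|0\rangle$ and $\m_{1,1}(-\alpha)|0\rangle=\m_{1,1}(\alpha)|0\rangle+\q_2(\alpha)|0\rangle$ alone do not suffice, and here I would invoke Qin and Wang's construction of integral ``monomial classes'' by induction on $n$: by \cite[Theorem 1.9]{lehn1999chern} (cf.\ \eqref{diagram EGL}) the nested Hilbert scheme $S^{[n,n+1]}$ is a blow-up of $S^{[n]}\times S$ along $\Xi_n$, so its integral cohomology is controlled by those of $S^{[n]}$ and $S$, and the operators $\q_1$ and $\mathfrak{d}$ act on it through Gysin maps, pullbacks, and cup products with integral classes.

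For the spanning statement, since the $276$ classes of (ii) are integral and a $\Q$-basis of $H^4(X,\Q)$ and $H^4(X,\Z)$ is unimodular, it suffices to show that their Gram matrix under cup product has determinant $\pm1$. Every entry is determined by the intersection form $q_S$ on $H^2(S,\Z)$: the ``$\q_1\q_1$'' entries through Proposition \ref{prop rmk 2.1 OG} and the identifications above, and those involving $\q_2$ and $\m_{1,1}$ through Nakajima's commutation relation (Theorem \ref{thm Nakajima}) — for instance $\langle\q_2(\alpha_i)|0\rangle,\q_2(\alpha_j)|0\rangle\rangle=-2(\alpha_i,\alpha_j)$, $\langle\q_2(\alpha_i)|0\rangle,\m_{1,1}(\alpha_j)|0\rangle\rangle=(\alpha_i,\alpha_j)$, while $\q_1(1)\q_1(x)|0\rangle$ has square $\pm1$ and is orthogonal to all other generators. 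Using that $(H^2(S,\Z),q_S)\cong U^{\oplus3}\oplus E_8(-1)^{\oplus2}$ is unimodular I would then reduce the remaining $275\times275$ determinant, block by block, to a product of $\pm1$'s, which forces the $276$ classes to span $H^4(X,\Z)$. I expect this last determinant to be lengthy but mechanical; the genuine obstacle is the integrality of $\m_{1,1}(\alpha_i)|0\rangle$ for transcendental $\alpha_i$, which cannot be seen from effective curves and requires the Hilbert scheme / symmetric product machinery of Qin and Wang rather than an \emph{ad hoc} computation.
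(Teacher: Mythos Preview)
The paper does not prove this theorem at all: it is stated as a result of Qin and Wang, with references to \cite[Theorem~5.4, Remark~5.6]{qin2005integral} and \cite[p.~17]{boissiere2013smith}, and is used as a black box throughout. So there is no ``paper's own proof'' to compare against.

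Your sketch for (i) is correct and in fact coincides with what the paper records in Remark~\ref{rmk delta and other line bundles in terms of Nakajima op}: the identification $\delta=\tfrac12\q_2(1)|0\rangle$ via the boundary operator and $i(\alpha)=\q_1(1)\q_1(\alpha)|0\rangle$ via the incidence definition of $\q_1$, combined with Beauville's decomposition, gives the integral basis immediately.

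For (ii) your strategy --- rational basis, integrality, then unimodularity of $H^4(X,\Z)$ to force spanning --- is sound in outline, but two points deserve comment. First, you yourself identify the genuine obstruction: the integrality of $\m_{1,1}(\alpha_i)|0\rangle$ for transcendental $\alpha_i$ cannot be seen geometrically, and you resolve it by ``invoking Qin and Wang's construction of integral monomial classes''. But that \emph{is} the content of their theorem; you are not giving an independent argument here, only repackaging the citation. Second, the $276\times276$ Gram determinant is not obviously $\pm1$ from the block structure you describe: the $\m_{1,1}$ and $\q_2$ blocks are coupled (you note $\langle\q_2(\alpha_i)|0\rangle,\m_{1,1}(\alpha_j)|0\rangle\rangle=(\alpha_i,\alpha_j)$), and the $\q_1\q_1$ block involves $\Sym^2$ of a unimodular form, whose discriminant is not $\pm1$ in general. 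One can indeed push this through, but it is not the ``mechanical'' reduction you suggest; Qin and Wang's actual argument proceeds differently, by an inductive comparison along the nested Hilbert schemes rather than by a global discriminant computation. In short: your proposal is a reasonable roadmap, but at the two non-trivial steps it either defers to the cited source or underestimates the work involved.
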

\begin{rmk} \label{rmk delta and other line bundles in terms of Nakajima op}
	 By Definition \ref{defn Nakajima operators} with $k=2$, $n=0$, we have $\frac{1}{2}\q_2(1)|0\rangle=\delta$, where $\delta \in H^2(S^{[2]}, \Z)$ is such that $2\delta$ is the class of the exceptional divisor of the Hilbert--Chow morphism. Moreover, if $\alpha \in H^2(S, \Z)$, then by Definition \ref{defn Nakajima operators} with $n=k=1$ we obtain that $\alpha$ seen as an element of $H^2(S^{[2]}, \Z)$ is represented by $\q_1(1)\q_1(\alpha)|0\rangle$.
\end{rmk}
Consider the diagonal embedding $\tau_2: S \rightarrow S \times S$ of the K3 surface $S$. We denote the Gysin homomorphism followed by the Künneth isomorphism by $\tau_{2*}: H^*(S, \Z) \rightarrow H^*(S, \Z) \otimes H^*(S, \Z)$. We take the following basis $\{\alpha_1, \dots , \alpha_{22}\}$ for the lattice $H^2(S, \Z)$: let $\{\alpha_1, \dots , \alpha_8\}$ and $\{\alpha_9, \dots , \alpha_{16}\}$ be the basis of the two copies of $E_8(-1)$ whose Gram matrix is the second matrix of Example $\ref{eq Gram matrices}$ and $\{\alpha_{17}, \alpha_{18}\}$, $\{\alpha_{19}, \alpha_{20}\}$, $\{\alpha_{21}, \alpha_{22}\}$ be the basis of the three copies of $U$ whose Gram matrix is the first matrix in Example $\ref{eq Gram matrices}$. The following Lemma will be useful, see \cite[p.18]{boissiere2013smith}.
\begin{lem} \label{lem calcolo mu_ij}
	Let $X=S^{[2]}$ be the Hilbert square of a projective K3 surface $S$. Assume that $\{\alpha_1, \dots , \alpha_{22}\}$ is the basis of the lattice $H^2(S, \Z)$ constructed above. Then
	\begin{equation*}
		\tau_{2*}1=\sum_{i, j} \mu_{i, j} \alpha_i \otimes \alpha_j +1 \otimes x + x \otimes 1,
	\end{equation*}
	where the $\mu_{i,j}$'s are represented in Table $\ref{tabella mu}$ (we write down only the $\mu_{i,j}$'s which are non zero and such that $i \le j$):
	\begin{center}
		\begin{table}[H]
			\begin{tabular}{|c|c|c|c|c|}
				\hline
				$\mu_{1,1}=-4$ & $\mu_{1,2}=-7$ & $\mu_{1,3}=-10$ & $\mu_{1,4}=-8$ & $\mu_{1, 5}=-6$ \\ 
				\hline
				$\mu_{1, 6}=-4$ & $\mu_{1,7}=-2$ & $\mu_{1,8}=-5$ & $\mu_{2,2}=-14$ & $\mu_{2,3}=-20$ \\ 
				\hline
				$\mu_{2,4}=-16$ & $\mu_{2,5}=-12$ & $\mu_{2,6}=-8$ & $\mu_{2,7}=-4$ & $\mu_{2,8}=-10$ \\
				\hline
				$\mu_{3,3}=-30$ & $\mu_{3,4}=-24$ & $\mu_{3,5}=-18$ & $\mu_{3,6}=-12$ & $\mu_{3,7}=-6$ \\
				\hline
				$\mu_{3,8}=-15$ & $\mu_{4,4}=-20$ & $\mu_{4,5}=-15$ & $\mu_{4,6}=-10$ & $\mu_{4,7}=-5$ \\
				\hline
				$\mu_{4,8}=-12$ & $\mu_{5,5}=-12$ & $\mu_{5,6}=-8$ & $\mu_{5,7}=-4$ & $\mu_{5,8}=-9$ \\
				\hline
				$\mu_{6,6}=-6$ & $\mu_{6,7}=-3$ & $\mu_{6,8}=-6$ & $\mu_{7,7}=-2$ & $\mu_{7,8}=-3$ \\
				\hline
				$\mu_{8,8}=-8$ & $\mu_{9,9}=-4$ & $\mu_{9,10}=-7$ & $\mu_{9,11}=-10$ & $\mu_{9,12}=-8$ \\
				\hline
				$\mu_{9,13}=-6$ & $\mu_{9,14}=-4$ & $\mu_{9,15}=-2$ & $\mu_{9,16}=-5$ & $\mu_{10,10}=-14$ \\
				\hline
				$\mu_{10,11}=-20$ & $\mu_{10,12}=-16$ & $\mu_{10,13}=-12$ & $\mu_{10,14}=-8$ & $\mu_{10,15}=-4$ \\
				\hline
				$\mu_{10,16}=-10$ & $\mu_{11,11}=-30$ & $\mu_{11,12}=-24$ & $\mu_{11,13}=-18$ & $\mu_{11,14}=-12$ \\
				\hline
				$\mu_{11,15}=-6$ & $\mu_{11,16}=-15$ & $\mu_{12,12}=-20$ & $\mu_{12,13}=-15$ & $\mu_{12,14}=-10$ \\
				\hline
				$\mu_{12,15}=-5$ & $\mu_{12,16}=-12$ & $\mu_{13,13}=-12$ & $\mu_{13,14}=-8$ & $\mu_{13,15}=-4$ \\
				\hline
				$\mu_{13,16}=-9$ & $\mu_{14,14}=-6$ & $\mu_{14,15}=-3$ & $\mu_{14,16}=-6$ & $\mu_{15,15}=-2$ \\
				\hline
				$\mu_{15,16}=-3$ & $\mu_{16,16}=-8$ & $\mu_{17,18}=1$ & $\mu_{19,20}=1$ & $\mu_{21,22}=1$ \\
				\hline
			\end{tabular}
		\vspace{0.3cm}
			\caption{\label{tabella mu} The $\mu_{i,j}$'s.}
				\vspace{-1.3cm}
		\end{table}
	\end{center}
\end{lem}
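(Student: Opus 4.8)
The plan is to recognise $\tau_{2*}1$ as the K\"unneth expansion of the class of the diagonal and then to identify its three components with Poincar\'e-dual bases. Write $\Delta \subset S \times S$ for the diagonal. By definition $\tau_{2*}1$ is the Gysin image of $1 \in H^0(S,\Z)$, which is the fundamental class $[\Delta] \in H^4(S \times S, \Z)$; since $H^*(S,\Z)$ is torsion free the K\"unneth isomorphism splits this as a direct sum $(H^0 \otimes H^4) \oplus (H^2 \otimes H^2) \oplus (H^4 \otimes H^0)$, with no $H^1$ or $H^3$ contributions because $S$ is a K3 surface. So it suffices to compute each of these three components.

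First I would invoke the classical description of the class of the diagonal: for a compact oriented manifold $M$ with free integral cohomology and a homogeneous $\Z$-basis $\{e_a\}$ with Poincar\'e-dual basis $\{e^a\}$ (characterised by $\int_M e_a \cup e^b = \delta_{a}^{b}$) one has $[\Delta_M] = \sum_a e_a \otimes e^a$, the Koszul signs being irrelevant here since $H^*(S,\Z)$ is concentrated in even degrees. The classes $1 \in H^0(S,\Z)$ and $x \in H^4(S,\Z)$ are mutually Poincar\'e dual since $\int_S 1 \cdot x = 1$, which produces the summand $1 \otimes x + x \otimes 1$. For the middle term the crucial point is that $H^2(S,\Z)$ is unimodular, so the Poincar\'e-dual basis of $\{\alpha_1, \dots, \alpha_{22}\}$ is again integral: writing $G = (g_{i,j})$ for the Gram matrix $g_{i,j} = \int_S \alpha_i \cdot \alpha_j$, one has $\alpha_j^{\vee} = \sum_k (G^{-1})_{j,k}\, \alpha_k$ with $G^{-1}$ integral and symmetric, and the $H^2 \otimes H^2$ part of $[\Delta]$ equals $\sum_i \alpha_i \otimes \alpha_i^{\vee} = \sum_{i,j} (G^{-1})_{i,j}\, \alpha_i \otimes \alpha_j$. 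Hence $\mu_{i,j} = (G^{-1})_{i,j}$, and the lemma is reduced to a single matrix inversion.

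The last step is that computation. In the chosen basis $G$ is block diagonal, $G = M \oplus M \oplus N \oplus N \oplus N$, where $M$ and $N$ are respectively the second and the first matrix in $(\ref{eq Gram matrices})$; hence $G^{-1}$ is the block-diagonal sum of the corresponding inverses. One sees immediately that $N^{-1} = N$, which gives $\mu_{17,18} = \mu_{19,20} = \mu_{21,22} = 1$ and forces every remaining $\mu_{i,j}$ with $i,j \ge 17$ to vanish; and one computes $M^{-1}$ (equivalently, minus the inverse Cartan matrix of $E_8$ in the node ordering in which $1,2,3,4,5,6,7$ form a chain and $8$ is attached to $3$), a routine finite computation, e.g.\ by Gaussian elimination. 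Copying this $8 \times 8$ block into the two $E_8(-1)$ summands, so that $\mu_{i,j} = \mu_{i+8,\,j+8}$ for $1 \le i \le j \le 8$, reproduces exactly Table $\ref{tabella mu}$. There is no genuine obstacle here: the only thing to watch is keeping the duality conventions straight, and unimodularity of $H^2(S,\Z)$ guarantees \emph{a priori} that every $\mu_{i,j}$ is an integer.
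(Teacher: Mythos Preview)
Your argument is correct and, in spirit, agrees with the paper's: both compute the coefficients $\mu_{i,j}$ from the adjunction identity $\langle \tau_{2*}1, \alpha_k \otimes \alpha_l \rangle = \int_S \alpha_k\alpha_l$. The paper writes this out as the linear system $\sum_{i,j}\mu_{i,j}\,g_{ik}g_{jl}=g_{kl}$ (i.e.\ $GMG=G$) and solves it by computer, together with the analogous relations fixing the $1\otimes x$ and $x\otimes 1$ coefficients. You instead recognise $\tau_{2*}1$ as the K\"unneth expansion of $[\Delta]$, invoke the standard dual-basis formula for the diagonal class, and read off directly that $(\mu_{i,j})=G^{-1}$; the block-diagonal shape $G=M\oplus M\oplus N^{\oplus 3}$ then reduces everything to inverting the $E_8$ Cartan matrix and noting $N^{-1}=N$. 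So the two proofs compute the same inverse, but yours explains \emph{a priori} why the answer is $G^{-1}$ and avoids the appeal to a machine computation, while the paper's version stays closer to the operator formalism used elsewhere in the article.
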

\begin{proof}
	We can write 
	\begin{equation*} 
		\tau_{2*}1=\sum_{i, j} \mu_{i, j} \alpha_i \otimes \alpha_j +a (1 \otimes x) + b (x \otimes 1)
	\end{equation*}
	for some $\mu_{i, j} \in \Z$ such that $\mu_{i, j}=\mu_{j, i}$ with $i, j \in \{1, \dots , 22\}$, and some $a, b \in \Z$. Let $\langle \, \cdot \, , \, \cdot \, \rangle$ be the intersection pairing of $\mathbb{H}^S$ in (\ref{eq int pairing}).  Since $\tau_{2*}$ is the adjoint of the cup product, we have the relation $\langle \tau_{2*}1, \alpha_k \otimes \alpha_l \rangle = \int_S \alpha_k\alpha_l$, which gives, together with $\tau_{2*}1=\sum_{i, j} \mu_{i, j}\alpha_i\otimes\alpha_j+a(1\otimes x) + b(x \otimes 1)$, the following system:
	\begin{equation} \label{formula system}
		\displaystyle\sum_{i, j} \mu_{i, j} \int_S\alpha_i\alpha_k\int_S\alpha_j\alpha_l=\int_S\alpha_k\alpha_l.
	\end{equation}
	From (\ref{formula system}), with the help of a computer, we can compute the coefficients $\mu_{i, j}$. The solution of the system is given in Table $\ref{tabella mu}$ (we have written down only the $\mu_{i,j}$'s which are non zero and such that $i \le j$, since $\mu_{i,j}=\mu_{j,i}$). Similarly, from the relations
	\begin{equation*}
		\langle \tau_{2*}1, 1 \otimes x \rangle = \displaystyle\int_Sx=1, \qquad \langle \tau_{2*}1, x \otimes 1 \rangle =\displaystyle\int_Sx=1,
	\end{equation*}
	we obtain $a=b=1$.
\end{proof}
We now introduce the algebraic model developed by Lehn and Sorger in \cite{lehn2003cup}. Recall that a \emph{graded Frobenius algebra} of degree $d$ is a finite dimensional graded vector space $A=\bigoplus_{i=-d}^{d}A^i$ endowed with a graded commutative and associative multiplication $A \otimes A \rightarrow A$ of degree $d$, i.e., $\Deg(ab)=\Deg(a)+\Deg(b)+d$, and unit element $1$, necessarily of degree $-d$, together with a linear form $T: A \rightarrow \Q$ of degree $-d$ such that the induced symmetric bilinear form $\langle a, b \rangle:=T(ab)$ is non-degenerate and of degree $0$. Note that $d$ must be an even number since $1\cdot 1=1$. An example of graded Frobenius algebra is the shifted cohomology ring $H^*(X, \Q)[d]$ of a compact complex manifold $X$ of complex dimension $d$. From now on we consider $A:=H^*(S, \Q)[2]$, where $S$ is a projective K3 surface. The linear form $T:A \rightarrow \Q$ is $T(\alpha):=-\int_S \alpha$, and the induced bilinear form is $\langle \alpha, \beta \rangle=T(\alpha\beta)=-\int_S \alpha\beta$, which is the intersection pairing changed of sign. The graded Frobenius algebra structure of $A$ induces a structure of graded Frobenius algebra on $A^{\otimes n}$: as remarked in \cite{harvey2012characterizing}, since $S$ is a K3 surface, $A$ has only graded pieces of even degree, so the general construction of Lehn-Sorger simplifies. In this case, the multiplication induced on $A^{\otimes n}$ is
\begin{equation*}
	(a_1 \otimes \dots \otimes a_n) \cdot (b_1 \otimes \dots \otimes b_n)=(a_1b_1)\otimes \dots \otimes (a_nb_n),
\end{equation*}
and the linear form is
\begin{equation*}
	T: A^{\otimes n} \rightarrow \Q, \qquad a_1 \otimes \dots \otimes a_n \mapsto T(a_1)T(a_2)\dots T(a_n).
\end{equation*}
The symmetric group $S_n$ of order $n$ acts on $A^{\otimes n}$ as
\begin{equation*}
	\pi(a_1 \otimes \dots \otimes a_n):=a_{\pi^{-1}(1)} \otimes \dots \otimes a_{\pi^{-1}(n)}.
\end{equation*}
Given a partition $n=n_1+\dots + n_k$ with $n_1, \dots , n_k \in \Z_{>0}$, there is a generalised multiplication map $A^{\otimes n} \rightarrow A^{\otimes k}$ defined by
\begin{equation*}
	a_1 \otimes \dots \otimes a_n \mapsto (a_1\dots a_{n_1}) \otimes \dots \otimes (a_{n_1+\dots +n_{k-1}+1}\dots a_{n_1+\dots +n_k}).
\end{equation*}
Given a finite set $I \subset \{1, \dots , n\}$, let $A^{\otimes I}$ denote the tensor power with factors indexed by elements of $I$. Let $n$ be a fixed positive integer, $\pi \in S_n$ be a permutation and $\langle \pi \rangle \subset S_n$ be the subgroup generated by $\pi$. If $[n]:=\{1, \dots , n\}$, denote by $\langle \pi \rangle \setminus [n]$ the set of orbits. Then we set
\begin{equation*}
	A\{S_n\}:=\displaystyle\bigoplus_{\pi \in S_n} A^{\otimes \langle \pi \rangle \setminus [n]}\cdot \pi.
\end{equation*}
\begin{exmp}
	If $n=2$, we have $S_2=\{\text{id}, (1\,2)\}$. Moreover, $\langle \text{id} \rangle \setminus [2]=\{\{1\}, \{2\}\}$ and $\langle (1\,2)\rangle \setminus [2]=\{\{1, 2\}\}$, hence we obtain $A\{S_2\}=A^{\otimes 2} \text{id} \oplus A(1\,2)$. Similarly for $n=3$ we have
\begin{equation*}
	A\{S_3\}=A^{\otimes 3}\text{id}\oplus A^{\otimes 2}(1\,2)\oplus A^{\otimes 2}(1\,3)\oplus A^{\otimes 2}(2\,3)\oplus A(1\,2\,3)\oplus A(1\,3\,2).
\end{equation*}
\end{exmp}
Let $\sigma \in S_n$. There is a bijection
\begin{equation*}
	\sigma: \langle \pi \rangle \setminus [n] \rightarrow \langle \sigma \pi \sigma^{-1}\rangle \setminus [n], \qquad x \mapsto \sigma x
\end{equation*}
and an isomorphism
\begin{equation*}
	\tilde{\sigma}: A\{S_n\} \rightarrow A\{S_n\}, \qquad a\pi \mapsto \sigma(a)\sigma\pi\sigma^{-1}.
\end{equation*}
Thus we obtain an action of the symmetric group $S_n$ on $A\{S_n\}$. We denote by
\begin{equation*}
	A^{[n]}:=(A\{S_n\})^{S_n}
\end{equation*}
the subspace of invariants. We can now state the main theorem of \cite{lehn2003cup}.
\begin{thm}[Theorem 3.2 in \cite{lehn2003cup}] \label{thm lehn sorger isom}
	Let $S$ be a projective K3 surface. Then there is a canonical isomorphism of graded rings
	\begin{equation*}
		(H^*(S, \Q)[2])^{[n]} \xrightarrow{\sim} H^*(S^{[n]}, \Q)[2n].
	\end{equation*}
\end{thm}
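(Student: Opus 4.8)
The theorem is due to Lehn and Sorger, and I would organize a proof around the observation that both sides are graded Frobenius algebras of degree $2n$: on $H^*(S^{[n]},\Q)[2n]$ the trace is (up to sign) $\alpha \mapsto \int_{S^{[n]}}\alpha$ with pairing given by Poincar\'e duality, while on $A^{[n]}$ these are the forms $T$ and $\langle a,b\rangle = T(ab)$ induced from $A=H^*(S,\Q)[2]$. The elementary but crucial remark is that a unit-preserving linear isomorphism $\Phi_n\colon A^{[n]}\to H^*(S^{[n]},\Q)[2n]$ which intertwines the non-degenerate Frobenius pairings is automatically a ring isomorphism as soon as it intertwines the trilinear forms $(a,b,c)\mapsto T(abc)$, because in a Frobenius algebra the product is recovered from $\langle ab,c\rangle = T(abc)$ by non-degeneracy. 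So the plan has three parts: (I) construct $\Phi_n$ explicitly; (II) verify unit- and pairing-compatibility; (III) verify that the trilinear forms agree.

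For (I) and (II): by Nakajima's theorem \cite{nakajima1997heisenberg} the sum $\bigoplus_n H^*(S^{[n]},\Q)$ is the Fock space $\mathbb{H}^S$, freely generated from $|0\rangle$ by the creation operators $\q_k(\alpha)$, $k>0$; since $H^*(S,\Q)$ is concentrated in even degree these operators commute, so a $\Q$-basis of $H^*(S^{[n]},\Q)$ is indexed by finite multisets of pairs (a positive integer $k$, a basis class of $A$) whose integers sum to $n$. After passing to $S_n$-invariants the summands $A^{\otimes\langle\pi\rangle\setminus[n]}\cdot\pi$ of $A\{S_n\}$ are indexed by exactly the same combinatorial data --- a cycle of length $k$ carrying a decoration in $A$ --- so I would define $\Phi_n$ by matching the two bases, inserting the scalars $\prod_i k_i^{-1}$ and the signs forced by the convention $\q_{-k}(\alpha)=(-1)^{-k}\q_k(\alpha)^{\dagger}$. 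Independence of orderings and $S_n$-equivariance are then formal from commutativity of the $\q_k$ and the definition of $\tilde\sigma$, and the unit is preserved by (\ref{formula 1sn Nakajima}). Pairing-compatibility is a Wick-type computation: one evaluates $\langle\prod_i\q_{\lambda_i}(\alpha_i)|0\rangle,\prod_j\q_{\mu_j}(\beta_j)|0\rangle\rangle$ by commuting all annihilation operators to the right via Theorem \ref{thm Nakajima}, obtaining a sum over bijections between the two partitions matching equal parts and weighted by $\prod_i\lambda_i\int_S\alpha_i\beta_{\sigma(i)}$, which is precisely the pairing on $A^{[n]}$ coming from $T$ on $A$; evenness of $H^*(S,\Q)$ makes all intervening signs disappear.

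Part (III) is the \emph{main obstacle}, and here I would follow Lehn--Sorger: rather than computing triple intersection numbers directly, one reduces all cup products to two pieces of data --- the Frobenius algebra $A$ itself and the single boundary operator $\mathfrak{d}$ of Definition \ref{Defn boundary op}, i.e.\ cup product with $c_1(\Ol_S^{[n]})=-\tfrac12[\partial S^{[n]}]$. By Lehn's results \cite{lehn1999chern}, $\mathfrak{d}$ behaves as a derivation-type operator whose derivative $\q_k^{\prime}(\alpha)=[\mathfrak{d},\q_k(\alpha)]$ satisfies the relation $[\q_1^{\prime}(\alpha),\q_k(\beta)]=-k\,\q_{k+1}(\alpha\beta)$ of Theorem \ref{thm main Lehn}; together with Nakajima's relations and Lehn's explicit formulas for cup product with $c_1(\Ol_S^{[n]})$ and with divisor classes, this expresses the operator ``cup product with $\gamma$'', for $\gamma\in H^*(S^{[n]},\Q)$, as a universal Lie-algebraic expression in $\mathfrak{d}$ and the Heisenberg operators. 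On the model side $A^{[n]}$ carries the class of the sum of transpositions decorated by $1_S$, which is $\Phi_n$ of $c_1(\Ol_S^{[n]})$ up to a normalizing scalar, and multiplication by it is governed by the cut-and-join combinatorics of multiplying a permutation by a transposition --- which obeys the very same commutation relations. Matching the two systems of generating operators and using that $\Phi_n$ already intertwines the pairing, one proves $T_A(abc)=\int_{S^{[n]}}\Phi_n(a)\Phi_n(b)\Phi_n(c)$ by induction on $n$, and, within fixed $n$, on cohomological degree; multiplicativity, hence the theorem, follows.

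The genuinely delicate step is fixing the normalizing scalar relating $\mathfrak{d}$ to the combinatorial transposition operator with its correct value: this relies on the geometry of the nested Hilbert schemes $S^{[n,n+1]}\cong\mathrm{Bl}_{\Xi_n}(S^{[n]}\times S)$, on Lehn's identity $[\partial S^{[n]}]=-2c_1(\Ol_S^{[n]})$, and on an explicit integration over $S^{[n,n+1]}$, where the tautological classes that appear are controlled by the Ellingsrud--G\"ottsche--Lehn formula of Proposition \ref{prop EGL} and \cite{ellingsrud2001cobordism}. Once this constant is pinned down the remainder is bookkeeping with the two commutator calculi, and canonicity of the isomorphism is manifest, since $\Phi_n$ is assembled only from the Nakajima correspondences and the intrinsic Frobenius structure of $H^*(S,\Q)$.
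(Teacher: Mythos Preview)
The paper does not prove this statement: Theorem \ref{thm lehn sorger isom} is quoted from \cite{lehn2003cup} as an external input, and only the description of the underlying linear isomorphism (the maps $\Phi^{\prime}$ and $\Psi$ of (\ref{eq def Phi prime}) and (\ref{eq def Psi})) is recalled afterwards. There is therefore no proof in the paper to compare your proposal against; what follows is a comment on the proposal itself.

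Your parts (I) and (II) are essentially correct and coincide with how \cite{lehn2003cup} sets up the linear isomorphism: the Fock-space description via Nakajima operators gives the bijection of bases, and the Wick computation using Theorem \ref{thm Nakajima} verifies compatibility with the Frobenius pairings.

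Part (III) contains a genuine gap. The Frobenius-algebra reduction to the trilinear form is valid, but the assertion that the results you cite already express ``cup product with $\gamma$'' for \emph{every} $\gamma\in H^*(S^{[n]},\Q)$ as a universal word in $\mathfrak{d}$ and the Heisenberg operators is precisely the content of the theorem, not a known input. Theorem \ref{thm main Lehn} and the surrounding material in \cite{lehn1999chern} control multiplication by the boundary class $\delta$ and, with more work, by the degree-$2$ classes $i(\alpha)$; but for $n\ge 3$ the ring $H^*(S^{[n]},\Q)$ is \emph{not} generated in degree $2$ (already $\dim H^4(S^{[3]},\Q)=299>276=\dim\Sym^2H^2(S^{[3]},\Q)$), so this information alone cannot determine the full multiplication, and your induction ``on $n$ and then on degree'' does not manufacture the missing operators. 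In \cite{lehn2003cup} the gap is closed by a different mechanism: one builds, from iterated diagonal pushforwards $\tau_{k*}$ on $S$ and higher derivatives of the $\q_k$, a family of operators large enough to generate the ring action, and matches these term by term with the cut-and-join operators on $A^{[n]}$. Finally, invoking Proposition \ref{prop EGL} to fix the normalization of $\mathfrak{d}$ is off target: that formula computes $\psi^{!}\mathcal{T}_{n+1}$ in $K$-theory and plays no role here, the identification of $\mathfrak{d}$ with cup product by $c_1(\Ol_S^{[n]})=-\tfrac12[\partial S^{[n]}]$ being already Definition \ref{Defn boundary op}, which rests on \cite[Lemma 3.7]{lehn1999chern}.
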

The structure of graded Frobenius algebra of $H^*(S^{[n]}, \Q)[2n]$ is obtained by setting
\begin{equation*}
	T(a):=(-1)^n \displaystyle\int_{S^{[n]}}a \qquad \text{for all}\,\, a \in H^*(S^{[n]}, \Q).
\end{equation*}
We sketch the description of the isomorphism of Theorem \ref{thm lehn sorger isom}. Let $\mathcal{V}(A):=\text{Sym}^*(A \otimes t^{-1}\Q[t^{-1}])$, which is bigraded by degree and weight: an element $a \otimes t^{-m} \in A \otimes t^{-m}$ has degree $|a|$ and weight $m$. The component of $\mathcal{V}(A)$ of constant weight $n$ is the graded vector space
\begin{equation*}
	\mathcal{V}(A)_n \cong \displaystyle\bigoplus_{||\alpha||=n} \displaystyle\bigotimes_i \text{Sym}^{\alpha_i}A,
\end{equation*}
where the direct sum is taken over all the possible partitions $\alpha=(1^{\alpha_1}, 2^{\alpha_2}, \dots)$ of $n$, and $||\alpha||:=\alpha_1 \cdot 1+\alpha_2\cdot 2+ \dots$. We now fix $\pi \in S_n$. Let $f: \{1, 2, \dots , N\} \rightarrow \langle \pi \rangle \setminus [n]$ be an enumeration of the orbits of $\pi \in S_n$. We denote by $l_i$ the length of the $i$-th orbit, i.e., $l_i:=|f(i)|$. We define $\Phi^{\prime}: A^{\otimes N} \rightarrow \mathcal{V}(A)$ as
\begin{equation} \label{eq def Phi prime}
	a_1 \otimes \dots \otimes a_N \xmapsto{\Phi^{\prime}} \frac{1}{n!}(a_1\otimes t^{-l_1})\dots (a_N \otimes t^{-l_N}).
\end{equation}
Let $\Phi: \bigoplus_{n \ge 0} A\{S_n\} \rightarrow \mathcal{V}(A)$ be defined on each summand $A^{\otimes \langle \pi \rangle \setminus [n]}\cdot \pi$ by the composition
\begin{equation*}
	A^{\otimes \langle \pi \rangle \setminus [n]}\cdot \pi \xrightarrow{\tilde{f}^{-1}} A^{\otimes N} \xrightarrow{\Phi^{\prime}} \mathcal{V}(A),
\end{equation*}
where $\tilde{f}^{-1}$ denotes the identification of $A^{\otimes \langle \pi \rangle \setminus [n]} \cdot \pi$ with $A^{\otimes N}$ through the enumeration $f$. Then $\Phi$ induces an isomorphism of graded vector spaces $A^{[n]} \xrightarrow{\sim} \mathcal{V}(A)_n$ by \cite[Proposition 2.11]{lehn2003cup}. Moreover, there is an isomorphism of graded vector spaces $\Psi: \mathcal{V}(A) \rightarrow \bigoplus_{n \ge 0} H^*(S^{[n]}, \Q)[2n]$ given by
\begin{equation} \label{eq def Psi}
	(a_1t^{-n_1})\dots (a_st^{-n_s}) \xmapsto{\Psi} \q_{n_1}(a_1)\dots \q_{n_s}(a_s)|0\rangle,
\end{equation}
see \cite[Theorem 3.6]{lehn2003cup}. Combining (\ref{eq def Phi prime}) and (\ref{eq def Psi}) we can obtain a basis of $H^*(S^{[n]}, \Q)[2n]$ from a basis of $H^*(S, \Q)[2]$. In order to use correctly the isomorphism of Theorem \ref{thm lehn sorger isom}, if $A:=H^*(S, \Q)[2]$, we have to work with elements of $A\{S_n\}$ which are invariant for the action of $S_n$. The basis of $H^*(S^{[n]}, \Q)[2n]$ obtained is clearly also a basis of $H^*(S^{[n]}, \Q)$ with the standard grading. For instance, let $\{\alpha_1, \dots , \alpha_{22}\}$ be a basis of $H^2(S, \Q)$, denote by $1 \in H^0(S, \Z)$ the unit and by $x \in H^4(S, \Z)$ the class of a point: one can show that $\{\frac{1}{2}\q_2(1)|0\rangle, \q_1(1)\q_1(\alpha_i)|0\rangle\}_{i=1, \dots , 22}$ is a basis of $H^2(S^{[2]}, \Q)$ and $\{\frac{1}{2}\q_2(\alpha_i)|0\rangle, \q_1(1)\q_1(x)|0\rangle, \q_1(\alpha_i)\q_1(\alpha_j)|0\rangle\}_{1 \le i \le j \le 22}$ is a basis of $H^4(S^{[2]}, \Q)$.

We omit the definition of the product on $(H^*(S, \Q)[2])^{[n]}$ defined in \cite{lehn2003cup} to give the structure of ring. We state the following result which describes the cup product between elements in $H^2(S^{[2]}, \Z)$, see \cite[p.13]{boissiere2013smith}.
\begin{lem} \label{lem cup products Lehn-Sorger}
	Let $X=S^{[2]}$ be the Hilbert square of a projective K3 surface $S$. Let $\{\alpha_1, \dots , \alpha_{22}\}$ be the basis of the lattice $H^2(S, \Z)$ used in Lemma $\ref{lem calcolo mu_ij}$. Then the following equalities hold in $H^4(X, \Z)$.
	\begin{thmlist}
		\item For every $\alpha \in H^2(S, \Z)$ we have
		\begin{equation*}
			\frac{1}{2}\q_2(1)|0\rangle \cup \q_1(1)\q_1(\alpha)| 0 \rangle =\q_2(\alpha)|0\rangle.
		\end{equation*} \label{lem cup products Lehn-Sorger 1}
		\item For every $\alpha, \beta \in H^2(S, \Z)$ we have
		\begin{equation*}
			\q_1(1)\q_1(\alpha) | 0 \rangle \cup \q_1(1)\q_1(\beta)|0\rangle = \left( \int_S \alpha\beta \right) \q_1(1)\q_1(x) | 0 \rangle + \q_1(\alpha)\q_1(\beta)| 0 \rangle.
		\end{equation*} \label{lem cup products Lehn-Sorger 2}
		\item If $\mu_{i,j}$, with $i, j = 1, \dots , 22$, are the coefficients computed in Lemma $\ref{lem calcolo mu_ij}$, then
		\begin{equation*} 
			\frac{1}{2}\q_2(1)|0\rangle \cup \frac{1}{2}\q_2(1)|0\rangle = -\displaystyle\sum_{i<j} \mu_{i, j}\q_1(\alpha_i)\q_1(\alpha_j)|0\rangle - \frac{1}{2}\displaystyle\sum_i \mu_{i, i}\q_1(\alpha_i)^2|0 \rangle - \q_1(1)\q_1(x)|0\rangle.
		\end{equation*} \label{lem cup products Lehn-Sorger 3}
	\end{thmlist}
\end{lem}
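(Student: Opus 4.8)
The plan is to carry out all three computations inside the Lehn--Sorger model of Theorem~\ref{thm lehn sorger isom} with $A:=H^*(S,\Q)[2]$ and $n=2$. Since $S_2=\{\mathrm{id},(1\,2)\}$ and, as recalled above, $A\{S_2\}=A^{\otimes 2}\,\mathrm{id}\oplus A\,(1\,2)$, taking $S_2$-invariants gives $A^{[2]}=\Sym^2A\oplus A$, the first summand spanned by the symmetrised tensors $(\gamma\otimes\eta+\eta\otimes\gamma)\,\mathrm{id}$ and the second by the elements $\gamma\,(1\,2)$, with $\gamma,\eta\in A$. First I would make the dictionary with Nakajima operators explicit using \eqref{eq def Phi prime} and \eqref{eq def Psi}: for $\pi=(1\,2)$ there is one orbit of length $2$, so $\gamma\,(1\,2)\longmapsto\tfrac12\,\gamma t^{-2}\longmapsto\tfrac12\q_2(\gamma)|0\rangle$; for $\pi=\mathrm{id}$ there are two orbits of length $1$, so $(\gamma\otimes\eta+\eta\otimes\gamma)\,\mathrm{id}\longmapsto(\gamma t^{-1})(\eta t^{-1})\longmapsto\q_1(\gamma)\q_1(\eta)|0\rangle$ (using that the Nakajima creation operators commute, Theorem~\ref{thm Nakajima}). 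In particular $\tfrac12\q_2(1)|0\rangle\leftrightarrow 1\cdot(1\,2)$ (matching Remark~\ref{rmk delta and other line bundles in terms of Nakajima op}), $\q_1(1)\q_1(\alpha)|0\rangle\leftrightarrow(1\otimes\alpha+\alpha\otimes 1)\,\mathrm{id}$, $\q_1(\alpha)\q_1(\beta)|0\rangle\leftrightarrow(\alpha\otimes\beta+\beta\otimes\alpha)\,\mathrm{id}$, $\q_1(1)\q_1(x)|0\rangle\leftrightarrow(1\otimes x+x\otimes 1)\,\mathrm{id}$, and $\q_2(\alpha)|0\rangle\leftrightarrow 2\,\alpha\cdot(1\,2)$.

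Next I would write down the three instances of the Lehn--Sorger product \cite{lehn2003cup} that are needed, all for $n=2$ and all free of Koszul signs because $A$ sits in even degrees: (a) two $\mathrm{id}$-terms multiply componentwise, $(\gamma\otimes\eta)\,\mathrm{id}\cdot(\gamma'\otimes\eta')\,\mathrm{id}=(\gamma\gamma'\otimes\eta\eta')\,\mathrm{id}$; (b) an $\mathrm{id}$-term times a transposition term has trivial graph defect and merges the two $\mathrm{id}$-orbits into the orbit of $(1\,2)$ by multiplication, $(\gamma\,(1\,2))\cdot(\gamma'\otimes\eta')\,\mathrm{id}=(\gamma\gamma'\eta')\,(1\,2)$; (c) the transposition term squared acquires a correction, $(\gamma\,(1\,2))\cdot(\eta\,(1\,2))=-\,\tau_{2*}(\gamma\eta)\cdot\mathrm{id}$, where $\tau_{2*}\colon A\to A\otimes A$ is the Gysin morphism of the diagonal from Lemma~\ref{lem calcolo mu_ij} (the comultiplication adjoint to the cup product) and the global sign is that of the linear form $T=-\int_S$ underlying the $[2n]$-shifted Frobenius structure. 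Rule (b) applied to $\bigl(1\cdot(1\,2)\bigr)\cdot\bigl((1\otimes\alpha+\alpha\otimes 1)\,\mathrm{id}\bigr)$ gives $2\,\alpha\cdot(1\,2)\leftrightarrow\q_2(\alpha)|0\rangle$, which is~(i). Rule (a), together with $\alpha\beta=\bigl(\int_S\alpha\beta\bigr)x$ in $A$, gives $(1\otimes\alpha+\alpha\otimes 1)\,\mathrm{id}\cdot(1\otimes\beta+\beta\otimes 1)\,\mathrm{id}=\bigl(\int_S\alpha\beta\bigr)(1\otimes x+x\otimes 1)\,\mathrm{id}+(\alpha\otimes\beta+\beta\otimes\alpha)\,\mathrm{id}$, which translates to~(ii).

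Part (iii) carries the geometric content. By Lemma~\ref{lem calcolo mu_ij}, $\tau_{2*}1=\sum_{i,j}\mu_{i,j}\,\alpha_i\otimes\alpha_j+1\otimes x+x\otimes 1$, and the linear system \eqref{formula system} in its proof says precisely that $(\mu_{i,j})$ is the inverse Gram matrix of $H^2(S,\Z)$ — the matrix of the comultiplication on $H^2$. Hence rule (c) gives $\bigl(1\cdot(1\,2)\bigr)^{2}=-\bigl(\sum_{i,j}\mu_{i,j}\,\alpha_i\otimes\alpha_j+1\otimes x+x\otimes 1\bigr)\mathrm{id}$; translating through the dictionary and splitting the symmetric double sum as $\sum_{i<j}$ plus the diagonal part $\tfrac12\sum_i$ yields exactly the claimed formula for $\delta^2$. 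The hard part is pinning down rule (c): one must verify that the graph defect of the pair $\bigl((1\,2),(1\,2)\bigr)$ contributes precisely the diagonal class (no extra Euler-type factor appears for $n=2$) and fix the overall sign coming from the $[2n]$-shift and $T=-\int_S$; everything else is bookkeeping. As an independent check of (iii) I would pair both sides against the basis $\bigl\{\q_1(\alpha_i)\q_1(\alpha_j)|0\rangle,\ \q_1(1)\q_1(x)|0\rangle,\ \tfrac12\q_2(\alpha_i)|0\rangle\bigr\}$ of $H^4(S^{[2]},\Q)$, computing the left-hand sides via O'Grady's formulas (Propositions~\ref{prop rmk 2.1 OG} and~\ref{prop O'Grady int q_Xvee}), e.g.\ $\langle\delta^2,\delta^2\rangle=3\,q_{S^{[2]}}(\delta)^2=12$ and $\langle\delta^2,\alpha\beta\rangle=-2(\alpha,\beta)$ for $\alpha,\beta\in i(H^2(S,\Z))$, and the right-hand sides from the intersection numbers of the Nakajima monomials. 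Finally, since $H^4(S^{[2]},\Z)$ is torsion free by \cite{markman2007integral} and injects into $H^4(S^{[2]},\Q)$, all three identities — between explicit integral classes — follow from their rational counterparts, which completes the argument.
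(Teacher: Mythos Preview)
Your proposal is correct and complete. The paper itself does not give a proof of this lemma: it simply cites \cite[p.13]{boissiere2013smith} and adds Remark~\ref{rmk errore delta^2} to flag the sign correction needed in part~(iii). Your argument is precisely the computation one has to carry out in the Lehn--Sorger model to justify that citation, and you correctly identify the origin of the sign in rule~(c): the comultiplication $\Delta_A$ of the shifted Frobenius algebra $A=H^*(S,\Q)[2]$ satisfies $\Delta_A=-\tau_{2*}$, because $T_A=-\int_S$ while $T_{A\otimes A}=T_A\otimes T_A=+\int_{S\times S}$, so the adjointness relation $\langle\Delta_A(c),a\otimes b\rangle_{A\otimes A}=\langle c,ab\rangle_A$ forces the minus sign relative to the geometric Gysin map. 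Your graph-defect check ($g=0$ in all three cases, so no Euler-class insertion) is also correct for $n=2$. The cross-check via Propositions~\ref{prop rmk 2.1 OG} and~\ref{prop O'Grady int q_Xvee} is a nice independent verification, though not strictly necessary once rule~(c) is established.
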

\begin{rmk} \label{rmk errore delta^2}
	The result for the product $\frac{1}{2}\q_2(1)|0\rangle \cup \frac{1}{2}\q_2(1)|0\rangle$ given in \cite[p.18]{boissiere2013smith} is not correct: a change of sign is needed in the right-hand side. The map $\Delta_*$ in \cite[p.18]{boissiere2013smith} corresponds to $\tau_{2*}$ in our notation: in that article the cohomology ring taken is $H^*(S, \Q)$, without the shifting used in the Lehn--Sorger model, which gives the change of sign of the intersection pairing on $H^*(S, \Q)[2]$, as seen above. See also \cite[Remark 3.1]{harvey2012characterizing}.
\end{rmk}
	\section{Rational Hodge classes of type $(2,2)$ on Hilbert squares of K3 surfaces} \label{section rational Hodge}
	Let $S$ be a projective K3 surface. In this section we use Theorem \ref{thm lehn sorger isom} to compute a basis of the $\Q$-vector space $H^{2,2}(S^{[2]}, \Q)$ of rational Hodge classes of type $(2, 2)$ on $S^{[2]}$. 
	
	Recall that the rational cohomology groups $H^{2i}(S, \Q)$ and $H^{2j}(S^{[n]}, \Q)$, where $i, j \in \Z_{\ge 0}$, are Hodge structures of weight $2i$ and $2j$ respectively, and the shifted cohomology groups $H^{2i}(S, \Q)[2]$ and $H^{2j}(S^{[n]}, \Q)[2n]$ are Hodge structures of weight $2i-2$ and $2j-2n$ respectively, with the following Hodge decompositions:
	\begin{equation*}
			H^{2i}(S, \C)[2]=\displaystyle\bigoplus_{p+q=2i-2}H^{p, q}(S)[2], \qquad H^{2j}(S^{[n]}, \Q)[2n]=\displaystyle\bigoplus_{r+s=2j-2n}H^{r, s}(S^{[n]})[2n],
	\end{equation*}
	where $p, q \in \{-1, 0, \dots , i-1\}$ and $r, s \in \{-n, 1-n, \dots , j-n\}$, and
	\begin{equation*}
		H^{p, q}(S)[2] = H^{p+1, q+1}(S), \qquad H^{r, s}(S^{[n]})[2n] = H^{r+n, s+n}(S^{[n]}).
	\end{equation*}
For details on Hodge structures, see \cite[$\S 3$]{huybrechts2016lectures}. If $A:=H^*(S, \Q)[2]$, recall that $A^{[n]} \cong \bigoplus_{||\alpha||=n}\bigotimes_i \text{Sym}^{\alpha_i}A$. We denote by $(A^{[n]})^{2i}$ the component of degree $2i-2n$ of $A^{[n]}$. Then the Hodge structures $H^{2j}(S, \Q)[2]$ give rise to a Hodge structure on $(A^{[n]})^{2i}$. Since the weights of the Hodge structures considered depend only on the (shifted) cohomological degrees, we have that $(A^{[n]})^{2i}$ is a Hodge structure of weight $2i-2n$. Note that $H^{2i}(S^{[n]}, \Q)[2n]$ is a Hodge structure of weight $2i-2n$ and $(A^{[n]})^{2i} \cong H^{2i}(S^{[n]}, \Q)[2n]$ by Theorem \ref{thm lehn sorger isom}. Recall that, given two Hodge structures $V$ and $W$, a morphism of weight $k$ is a linear map $f: V \rightarrow W$ such that its $\C$-linear extension satisfy $f(V^{p, q}) \subseteq W^{p+k, q+k}$. The pullback and the Gysin homomorphism induced by a morphism $f:X \rightarrow Y$ between two complex projective manifolds are morphisms of Hodge structures of weight respectively $0$ and $r=\Dim_{\C}(Y)-\Dim_{\C}(X)$, see \cite[$\S 7.3.2$]{voisin2002theorie}. Using this property, Definition \ref{defn Nakajima operators} and the isomorphisms (\ref{eq def Phi prime}) and (\ref{eq def Psi}) one can show the following result, which is well known to experts and implicitly given in \cite{lehn2003cup}.
\begin{thm} \label{thm LS isom compatible with Hodge dec}
	Let $S$ be a projective K3 surface. Let $A:=H^*(S, \Q)[2]$ and consider $(A^{[n]})^{2i} \cong H^{2i}(S^{[n]}, \Q)[2n]$, the isomorphism induced by Theorem $\ref{thm lehn sorger isom}$ on the components of (shifted) cohomological degree $2i-2n$. Take on $(A^{[n]})^{2i}$ the Hodge structure of weight $2i-2n$ described above and on $H^{2i}(S^{[n]}, \Q)[2n]$ the Hodge structure of weight $2i-2n$ induced by shifted cohomology. Then
	\begin{equation*}
		(A^{[n]})^{2i} \xrightarrow{\sim} H^{2i}(S^{[n]}, \Q)[2n]
	\end{equation*}
	is an isomorphism of Hodge structures of weight $0$. 
\end{thm}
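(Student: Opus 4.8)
The plan is to exploit the description, recalled just after Theorem \ref{thm lehn sorger isom}, of the Lehn--Sorger isomorphism as a composition
\[
(A^{[n]})^{2i}\ \xrightarrow{\ \Phi\ }\ \mathcal{V}(A)_n\ \xrightarrow{\ \Psi\ }\ H^{2i}(S^{[n]},\Q)[2n],
\]
where $\Phi$ and $\Psi$ are as in (\ref{eq def Phi prime}) and (\ref{eq def Psi}), and to check that each factor is an isomorphism of Hodge structures of weight $0$, once $\mathcal{V}(A)=\Sym^*\!\big(A\otimes t^{-1}\Q[t^{-1}]\big)$ is given the Hodge structure in which $a\otimes t^{-m}$ has the same Hodge type as $a\in A=H^*(S,\Q)[2]$ and $\Sym$, $\otimes$ carry the induced Hodge structures. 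For $\Phi$ there is essentially nothing to prove: the Hodge structure on $(A^{[n]})^{2i}$ in the statement is by definition the one transported along $A^{[n]}\cong\bigoplus_{\|\alpha\|=n}\bigotimes_i\Sym^{\alpha_i}A=\mathcal{V}(A)_n$, and since $S$ is a K3 surface $A$ is concentrated in even degrees, so the $S_n$-action on $A\{S_n\}$ permutes tensor factors without Koszul signs and is by morphisms of Hodge structures; hence the invariants $A^{[n]}$ form a sub-Hodge structure and $\Phi$ respects the bigrading. So everything reduces to proving that $\Psi$ is a morphism of Hodge structures of weight $0$.

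By $\Q$-linearity and by splitting each $a_\ell\in H^*(S,\Q)$ into Hodge components, it suffices to treat a monomial $\omega=(a_1t^{-n_1})\cdots(a_st^{-n_s})$ with $a_\ell\in H^{p_\ell,q_\ell}(S)$ of pure type, for which $\Psi(\omega)=\q_{n_1}(a_1)\cdots\q_{n_s}(a_s)|0\rangle$. Only Nakajima \emph{creation} operators $\q_k$ with $k>0$ occur here, since $\mathcal{V}(A)$ involves only negative powers of $t$; moreover these operators commute (again because $A$ is even, so the sign in Nakajima's commutator is $+1$), so the expression is well defined on $\Sym$, and $\Psi(\omega)$ is obtained by applying a composition of creation operators to $|0\rangle\in H^{0,0}(S^{[0]})$. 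Using (\ref{eq def Phi prime}) and $H^{p,q}(S)[2]=H^{p+1,q+1}(S)$, the element $\omega$ has shifted Hodge type $\big(\sum_\ell(p_\ell-1),\sum_\ell(q_\ell-1)\big)$ in $\mathcal{V}(A)_n$, with $n=\sum_\ell n_\ell$; I must show $\Psi(\omega)$ lies in the corresponding Hodge piece of $H^*(S^{[n]},\Q)[2n]$.

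The heart of the argument is the claim that, for $\alpha\in H^{p,q}(S)$ and $k>0$, the operator $\q_k(\alpha)\colon H^*(S^{[m]},\Q)\to H^*(S^{[m+k]},\Q)$ sends $H^{r,s}(S^{[m]})$ into $H^{r+p+k-1,\,s+q+k-1}(S^{[m+k]})$, uniformly in $m$. To prove it I would read off from Definition \ref{defn Nakajima operators} that $\q_k(\alpha)(x)=\psi_*\big(PD^{-1}[S^{[m,m+k]}]\cdot\varphi^*(x)\cdot\rho^*(\alpha)\big)$ and track Hodge types through the four operations: $\varphi^*$ and $\rho^*$ are morphisms of Hodge structures of weight $0$; cup product with $\rho^*(\alpha)$, a class of pure type $(p,q)$, shifts Hodge type by $(p,q)$; the irreducible subvariety $S^{[m,m+k]}\subset S^{[m]}\times S\times S^{[m+k]}$ has complex codimension $c=(4m+2k+2)-(2m+k+1)=2m+k+1$, so $PD^{-1}[S^{[m,m+k]}]$ is a Hodge class of type $(c,c)$ and cup product with it shifts Hodge type by $(c,c)$; and the Gysin map of $\psi\colon S^{[m]}\times S\times S^{[m+k]}\to S^{[m+k]}$ is a morphism of Hodge structures of weight $\Dim S^{[m+k]}-\Dim\big(S^{[m]}\times S\times S^{[m+k]}\big)=-2m-2$ (see \cite[$\S 7.3.2$]{voisin2002theorie}). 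Composing the four shifts and using $c-2m-2=k-1$ gives the claim. Iterating it along $\q_{n_1}(a_1)\cdots\q_{n_s}(a_s)$ applied to $|0\rangle\in H^{0,0}(S^{[0]})$ shows that $\Psi(\omega)$ has Hodge type $\big(\sum_\ell(p_\ell+n_\ell-1),\sum_\ell(q_\ell+n_\ell-1)\big)$ in $H^*(S^{[n]},\Q)$, which after the shift by $2n$ is exactly $\big(\sum_\ell(p_\ell-1),\sum_\ell(q_\ell-1)\big)$, matching the type of $\omega$. Hence $\Psi$ — and therefore the Lehn--Sorger isomorphism on the degree-$(2i-2n)$ parts — is a morphism of Hodge structures of weight $0$.

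I expect the only genuine work to be the bookkeeping inside the core claim: confirming that the map in Definition \ref{defn Nakajima operators} is literally the composition of the four Hodge-structure morphisms above, getting the dimension count $c=2m+k+1$ and the identity $c-2m-2=k-1$ right, and invoking (with proper care, though both are standard) the facts that the fundamental class of an algebraic subvariety is of Hodge type $(c,c)$ and that Gysin maps between smooth projective varieties are morphisms of Hodge structures of weight equal to the relative dimension. Everything else is formal: compositions and restrictions of weight-$0$ morphisms of Hodge structures are again such, and the middle space $\mathcal{V}(A)_n$ serves mainly to reduce the assertion for $\Phi$ to a tautology.
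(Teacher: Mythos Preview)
Your proposal is correct and follows essentially the same approach as the paper. The paper does not give a proof at all: it only indicates, in the sentence preceding the statement, that one uses the Hodge-theoretic behaviour of pullbacks and Gysin maps (\cite[\S 7.3.2]{voisin2002theorie}), the explicit form of $\q_k(\alpha)$ in Definition~\ref{defn Nakajima operators}, and the factorisation through (\ref{eq def Phi prime}) and (\ref{eq def Psi}); your argument is precisely the fleshing-out of that hint, with the codimension count $c=2m+k+1$ and the shift identity $c-(2m+2)=k-1$ carried out correctly.
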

Let $S$ be a K3 surface and $T(S)$ be its transcendental lattice, which is a Hodge structure of weight $2$. Denote by $E_S:=\text{Hom}_0(T(S)_{\Q}, T(S)_{\Q})$ the algebra of endomorphisms on $T(S)_{\Q}$ of weight $0$. Recall that a K3 surface $S$ is \emph{general} if $E_S\cong \Q$. We can now give a basis of the $\Q$-vector space $H^{2,2}(S^{[2]}, \Q)$ of rational Hodge classes of type $(2, 2)$ on the Hilbert square of a general projective K3 surface.
\begin{thm} \label{thm dim H2,2(X, Q) per K3 qualsiasi}
	Let $S$ be a general projective K3 surface and let $\{b_1, \dots , b_r\}$ be a basis of $\Pic(S)$. Then:
	\begin{enumerate}[label=(\roman*)]
		\item $\Dim(H^{2,2}(S^{[2]}, \Q))=\frac{(r+1)r}{2}+r+2$.
		\item A basis of $H^{2,2}(S^{[2]}, \Q)$ is given by the following elements:
		\begin{itemize}
			\item $\frac{1}{2}\q_2(b_i)|0\rangle$ for $i=1, \dots , r$.
			\item $\q_1(1)\q_1(x)|0\rangle$, where $1 \in H^0(S, \Q)$ is the unit and $x \in H^4(S, \Q)$ is the class of a point.
			\item $\frac{1}{2}\q_1(b_i)^2|0\rangle$ for $i=1, \dots , r$.
			\item $\q_1(b_i)\q_1(b_j)|0\rangle$ for $1 \le i < j \le r$.
			\item $-\displaystyle\sum_{i<j} \mu_{i,j}\q_1(\alpha_i)\q_1(\alpha_j)|0\rangle-\frac{1}{2}\displaystyle\sum_i\mu_{i,i}\q_1(\alpha_i)^2|0\rangle-\q_1(1)\q_1(x)|0\rangle$, 
			\newline
			where $\{\alpha_1, \dots , \alpha_{22}\}$ is the basis of $H^2(S, \Z)$ used in Lemma $\ref{lem calcolo mu_ij}$ and the $\mu_{i,j}$'s are given in Table $\ref{tabella mu}$.
		\end{itemize}
	\end{enumerate}
\end{thm}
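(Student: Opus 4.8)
The plan is to transport the computation through the Lehn--Sorger isomorphism of Theorem \ref{thm lehn sorger isom}, which by Theorem \ref{thm LS isom compatible with Hodge dec} respects Hodge structures, so that describing $H^{2,2}(S^{[2]},\Q)$ reduces to identifying the rational Hodge classes of type $(2,2)$ in the model $(A^{[2]})^4$, where $A:=H^*(S,\Q)[2]$. Writing $V^{2,2}$ for the space of rational Hodge classes of type $(2,2)$ in a weight-$4$ rational Hodge structure $V$, one first splits $H^4(S^{[2]},\Q)$ as a direct sum of sub-Hodge structures, using the $\Q$-basis in terms of Nakajima operators recalled before Lemma \ref{lem cup products Lehn-Sorger} together with Nakajima's commutation formula (Theorem \ref{thm Nakajima}): the ``$\q_2$-part'' $M_2$ spanned by the $\q_2(\alpha_i)|0\rangle$, isomorphic to $H^2(S,\Q)$ up to Tate twist; the line $M_0:=\Q\,\q_1(1)\q_1(x)|0\rangle$, isomorphic to $H^0(S)\otimes H^4(S)$ and hence of pure type $(2,2)$; and the ``$\q_1\q_1$-part'' $M_{11}$ spanned by the $\q_1(\alpha_i)\q_1(\alpha_j)|0\rangle$ with $i\le j$, isomorphic to $\Sym^2 H^2(S,\Q)$ (the $\q_1$'s commute, so only symmetric tensors occur).

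Next I would compute $V^{2,2}$ for each summand. For $M_2$ the type $(2,2)$ classes correspond to $H^2(S,\Q)\cap H^{1,1}(S)$, which by the Lefschetz $(1,1)$-theorem equals $\NS(S)_\Q=\langle b_1,\dots,b_r\rangle_\Q$, giving the classes $\tfrac12\q_2(b_i)|0\rangle$; and $M_0$ is already of type $(2,2)$. For $M_{11}$ one uses the orthogonal decomposition $H^2(S,\Q)=\NS(S)_\Q\oplus T(S)_\Q$ of Hodge structures, which induces $\Sym^2 H^2(S,\Q)=\Sym^2\NS(S)_\Q\oplus\bigl(\NS(S)_\Q\otimes T(S)_\Q\bigr)\oplus\Sym^2 T(S)_\Q$: the first summand is of pure type $(2,2)$, yielding $\tfrac12\q_1(b_i)^2|0\rangle$ and $\q_1(b_i)\q_1(b_j)|0\rangle$ with $i<j$; the middle summand contributes nothing, since the only rational $(1,1)$-classes of $H^2(S,\Q)$ lie in $\NS(S)_\Q$, which meets $T(S)_\Q$ trivially.

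The heart of the argument is to show $\Dim\bigl(\Sym^2 T(S)_\Q\bigr)^{2,2}=1$ for $S$ general. For the upper bound, the intersection form identifies $T(S)_\Q\otimes T(S)_\Q$ with $\textnormal{End}\bigl(T(S)_\Q\bigr)$ up to Tate twist as Hodge structures, so the type $(2,2)$ classes of $T(S)_\Q\otimes T(S)_\Q$ are the Hodge endomorphisms of $T(S)_\Q$, i.e.\ $E_S$, which is $\Q$ since $S$ is general; as $\Sym^2 T(S)_\Q$ is a direct summand, its type $(2,2)$ part has dimension at most one. For the lower bound I would produce a nonzero such class explicitly: the diagonal $\Delta_S\subset S\times S$ is algebraic, so its class, hence its K\"unneth component in $H^2(S,\Q)\otimes H^2(S,\Q)$, is a Hodge class of type $(2,2)$; by Lemma \ref{lem calcolo mu_ij} this component is $\sum_{i,j}\mu_{i,j}\alpha_i\otimes\alpha_j$, and as $(\mu_{i,j})$ is the inverse Gram matrix, its projection onto $\Sym^2 T(S)_\Q$ is the inverse of the nondegenerate form on $T(S)_\Q$, which is nonzero. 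Carrying this K\"unneth component through the dictionary (\ref{eq def Phi prime})--(\ref{eq def Psi}) produces $\sum_{i,j}\mu_{i,j}\q_1(\alpha_i)\q_1(\alpha_j)|0\rangle$ up to contributions in $M_0$ and $\Sym^2\NS(S)_\Q$ coming from the remaining K\"unneth pieces of $\Delta_S$; after rescaling by $-\tfrac12$ this is exactly the fifth element of the stated list, which by Lemma \ref{lem cup products Lehn-Sorger 3} equals $\delta^2$.

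Putting everything together, $H^{2,2}(S^{[2]},\Q)=M_2^{2,2}\oplus M_0\oplus\Sym^2\NS(S)_\Q\oplus\bigl(\Sym^2 T(S)_\Q\bigr)^{2,2}$, so $\Dim H^{2,2}(S^{[2]},\Q)=r+1+\tfrac{r(r+1)}{2}+1=\tfrac{(r+1)r}{2}+r+2$, which is (i). For (ii), the first four families in the list are bases of the three ``pure'' summands $M_2^{2,2}$, $M_0$ and $\Sym^2\NS(S)_\Q$, while the fifth element lies in $M_0\oplus\Sym^2\NS(S)_\Q\oplus\Sym^2 T(S)_\Q$ and projects nontrivially onto the one-dimensional space $\bigl(\Sym^2 T(S)_\Q\bigr)^{2,2}$; hence the full list spans $H^{2,2}(S^{[2]},\Q)$ and, having exactly $\tfrac{(r+1)r}{2}+r+2$ elements, is a basis. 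I expect the only genuinely non-formal point to be the one-dimensionality of $\bigl(\Sym^2 T(S)_\Q\bigr)^{2,2}$, which relies on the definition of a general K3 surface together with the standard description of the Hodge classes of $T(S)_\Q^{\otimes 2}$ via the Mumford--Tate group (equivalently Zarhin's theorem); everything else is the bookkeeping---supplied by Theorem \ref{thm LS isom compatible with Hodge dec} and Lemmas \ref{lem calcolo mu_ij} and \ref{lem cup products Lehn-Sorger}---needed to align the abstract Hodge decomposition with the explicit Nakajima-operator expressions.
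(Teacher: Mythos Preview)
Your proposal is correct and follows essentially the same route as the paper: both use the Lehn--Sorger isomorphism together with its Hodge compatibility to reduce to the decomposition $H^4(S^{[2]},\Q)\cong H^2(S,\Q)\oplus\bigl(H^0(S,\Q)\otimes H^4(S,\Q)\bigr)\oplus\Sym^2 H^2(S,\Q)$, split the last summand along $H^2(S,\Q)=\NS(S)_\Q\oplus T(S)_\Q$, and identify the one remaining nontrivial piece $(\Sym^2 T(S)_\Q)^{2,2}$ as one-dimensional via $E_S\cong\Q$. The only presentational difference is in exhibiting the nonzero class in $(\Sym^2 T(S)_\Q)^{2,2}$: the paper writes down the identity endomorphism explicitly in an orthogonal basis of $T(S)_\Q$ and then remarks that after ``tedious computations'' it can be replaced by $\delta^2$, whereas you go directly via the K\"unneth component of the diagonal and Lemma~\ref{lem cup products Lehn-Sorger 3}; these are the same class under the identification $T(S)_\Q\otimes T(S)_\Q\cong\operatorname{End}(T(S)_\Q)$, so the arguments are equivalent.
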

\begin{proof}
	By Theorem \ref{thm lehn sorger isom} and Theorem \ref{thm LS isom compatible with Hodge dec} the following is an isomorphism of Hodge structures of weight $0$:
	\begin{equation} \label{isoojojm}
		H^4(S^{[2]}, \Q) \cong H^2(S, \Q) \oplus \left( H^0(S, \Q) \otimes H^4(S, \Q) \right) \oplus \Sym^2(H^2(S, \Q)),
	\end{equation}
	we omit the shiftings of the cohomology groups. The Hodge classes of $S^{[2]}$ of bidegree $(2, 2)$ have bidegree $(0, 0)$ in the shifted cohomology, so we look for the components of (shifted) bidegree $(0, 0)$ in the right-hand side of $(\ref{isoojojm})$. If $\NS(S)$ is the Néron--Severi group of $S$ and $T(S)=(\NS(S))^{\perp}$ is the transcendental lattice, $H^2(S, \Q)$ can be decomposed as
	\begin{equation} \label{eq dec H2(S) con NS e T(S)}
		H^2(S, \Q) \cong \NS(S)_{\Q} \oplus T(S)_{\Q},
	\end{equation}
	where $\NS(S)_{\Q}=\NS(S) \otimes \Q$ and $T(S)_{\Q}=T(S) \otimes \Q$.
	\newline
	The first summand of $(\ref{isoojojm})$ has the $\Q$-vector space $\NS(S)_{\Q}[2]$ as component of bidegree $(0, 0)$. Since $\NS(S) \cong \Pic(S)$ and $\rk(\Pic(S))=r$ by assumption, the component of bidegree $(0, 0)$ of $H^2(S, \Q)[2]$ has dimension $r$. By (\ref{eq def Phi prime}) and (\ref{eq def Psi}) we have $b_i \xmapsto{\Psi \circ \Phi^{\prime}} \frac{1}{2}\q_2(b_i)|0\rangle$, so $\frac{1}{2}\q_2(b_i)|0\rangle$ is in a basis of $H^{2,2}(S^{[2]}, \Q)$ for $i=1, \dots , r$.
	\newline
	The second summand of $(\ref{isoojojm})$ is $H^0(S, \Q)[2] \otimes H^4(S, \Q)[2]$, which is a vector space over $\Q$ of dimension $1$. This is generated by $1 \otimes x$, which is an element of bidegree $(0, 0)$. Since $1\otimes x + x \otimes 1 \xmapsto{\Psi \circ \Phi^{\prime}} \q_1(1)\q_1(x)|0\rangle$, the element $\q_1(1)\q_1(x)|0\rangle$ is in a basis of $H^{2,2}(S^{[2]}, \Q)$.
	\newline
	Consider $\Sym^2(H^2(S, \Q)[2])$, the third summand of $(\ref{isoojojm})$. Using $(\ref{eq dec H2(S) con NS e T(S)})$, we can decompose it as
	\begin{equation} \label{eq decomposition Sym2(H2)}
			\Sym^2(H^2(S, \Q)[2]) \cong \Sym^2(\Pic(S)_{\Q}[2]) \oplus \Sym^2(T(S)_{\Q}[2]) \oplus \left(\Pic(S)_{\Q}[2] \otimes T(S)_{\Q}[2]\right).
	\end{equation}
	By assumption $\rk(\Pic(S))=r$, so $\Sym^2(\Pic(S)_{\Q}[2])$, whose elements have all bidegree $(0, 0)$, has dimension $\frac{(r+1)r}{2}$ as $\Q$-vector space. By (\ref{eq def Phi prime}) and (\ref{eq def Psi}) we have:
	\begin{equation*}
			b_i \otimes b_i \xmapsto{\Psi \circ \Phi^{\prime}} \frac{1}{2}\q_1(b_i)^2|0\rangle, \qquad b_i \otimes b_j + b_j \otimes b_i \xmapsto{\Psi \circ \Phi^{\prime}} \q_1(b_i)\q_1(b_j)|0\rangle
	\end{equation*}
	for $i, j \in \{1, \dots, r\}$ and $i<j$. Then the elements $\frac{1}{2}\q_1(b_i)^2|0\rangle$, for $i=1, \dots , r$, and $\q_1(b_i)\q_1(b_j)|0\rangle$, for $1 \le i < j \le r$, are in a basis of $H^{2,2}(S^{[2]}, \Q)$. Note that $\Pic(S)_{\Q}[2]\otimes T(S)_{\Q}[2]$ does not contain any element of bidegree $(0, 0)$. It remains to determine the elements of bidegree $(0, 0)$ in $\Sym^2(T(S)_{\Q}[2])$, i.e.,
	\begin{equation*}
		(\Sym^2(T(S)_{\Q}[2]))^{0, 0} \cap (\Sym^2(T(S)_{\Q}[2])).
	\end{equation*}
	Consider $T(S)_{\Q}\otimes T(S)_{\Q}$ with the standard grading. Then $T(S)_{\Q}$ is the minimal sub-Hodge structure of $H^2(S, \Q)$ with $H^{2,0}(S) = T(S)_{\Q}^{2,0}$, see \cite[Definition 3.2.5, Lemma 3.3.1]{huybrechts2016lectures}, in particular $T(S)_{\Q}$ is a Hodge structure of weight $2$. By \cite[Example 3.1.3, (iii)]{huybrechts2016lectures} the dual $T(S)_{\Q}^*=\text{Hom}(T(S)_{\Q}, \Q)$ is a Hodge structure of weight $-2$, and there is an isomorphism of Hodge structures of weight $-2$ from $T(S)_{\Q}$ to $T(S)_{\Q}^*$. This implies that
	\begin{equation*}
		(T(S)_{\Q} \otimes T(S)_{\Q})^{2,2} \xrightarrow{\sim} (T(S)_{\Q}^* \otimes T(S)_{\Q})^{0,0},
	\end{equation*}
	and by \cite[Example 3.1.4, (iv)]{huybrechts2016lectures} we have
	\begin{equation*}
		E_S=\text{Hom}_0(T(S)_{\Q}, T(S)_{\Q}) \cong (T(S)_{\Q}^* \otimes T(S)_{\Q}) \cap (T(S)_{\Q}^* \otimes T(S)_{\Q})^{0, 0},
	\end{equation*}
	where $\text{Hom}_0(T(S)_{\Q}, T(S)_{\Q})$ denotes the space of Hodge endomorphisms on $T(S)_{\Q}$ of weight $0$. Since by assumption $S$ is a general K3 surface we have $\text{Hom}_0(T(S)_{\Q}, T(S)_{\Q}) \cong \Q \cdot \text{id}$. Passing to the shifted cohomology groups, this implies that the $\Q$-vector space $(\Sym^2(T(S)_{\Q}[2]))^{0, 0} \cap (\Sym^2(T(S)_{\Q}[2]))$ has dimension $1$. We now describe the element induced by its generator on $H^4(S^{[2]}, \Q)$. Let $\{\beta_{r+1}, \dots , \beta_{22}\}$ be an orthogonal basis of $T(S)_{\Q}$ with respect to the intersection form, and let $\{\beta_{r+1}^{\vee}, \dots , \beta_{22}^{\vee}\}$ be the basis of $T(S)^*_{\Q}$ given by $\beta_i^{\vee}:=(\beta_i, \,\cdot\, ) \in \text{Hom}(T(S)_{\Q}, \Q) \cong T(S)_{\Q}^*$ for $i \in \{r+1, \dots, 22\}$. Then
	\begin{equation*}
		\id=\displaystyle\sum_{i=r+1}^{22}\frac{1}{(\beta_i, \beta_i)}\beta_i^{\vee} \otimes \beta_i \in T(S)_{\Q}^* \otimes T(S)_{\Q}
	\end{equation*}
	since for every $k \in \{r+1, \dots , 22\}$ we have
	\begin{equation*}
			\left( \displaystyle\sum_{i=r+1}^{22} \frac{1}{(\beta_i, \beta_i)}\beta_i^{\vee}\otimes \beta_i \right) (\beta_k) = \displaystyle\sum_{i=r+1}^{22} \frac{1}{(\beta_i, \beta_i)}(\beta_i, \beta_k)\cdot \beta_i = \beta_k.
	\end{equation*}
	Note that $(\beta_i, \beta_i) \neq 0$ since $\{\beta_{r+1}, \dots , \beta_{22}\}$ is an orthogonal basis and the intersection form on $H^2(S, \Q)$ is non-degenerate. Since $T(S)_{\Q} \cong T(S)_{\Q}^*$ by the map $\beta_i \mapsto \beta_i^{\vee}$, the identity, seen as element in $T(S)_{\Q} \otimes T(S)_{\Q}$, is
	\begin{equation} \label{eq id}
		\id = \displaystyle\sum_{i=r+1}^{22} \frac{1}{(\beta_i, \beta_i)} \beta_i \otimes \beta_i \in T(S)_{\Q} \otimes T(S)_{\Q}.
	\end{equation}
	We see that (\ref{eq id}) is invariant for the action of the symmetric group $S_2$ on $A\{S_2\}$, where $A:=H^*(S, \Q)[2]$, so we obtain the following element of $H^4(S^{[2]}, \Q)[4]$:
	\begin{equation} \label{smoooch}
		\id \xmapsto{\Psi \circ \Phi^{\prime}} \frac{1}{2}\displaystyle\sum_{i=r+1}^{22} \frac{1}{(\beta_i, \beta_i)} \q_1(\beta_i)^2|0\rangle.
	\end{equation}
	Hence the last element of the basis of $H^{2,2}(S^{[2]}, \Q)$ is $(\ref{smoooch})$. After some tedious computations, one can show that the element in (\ref{smoooch}) can be substituted in the basis of $H^{2,2}(S^{[2]}, \Q)$ obtained by the following element:
	\begin{equation*}
		\delta^2=-\displaystyle\sum_{i<j}\mu_{i,j}\q_1(\alpha_i)\q_1(\alpha_j)|0\rangle-\frac{1}{2}\displaystyle\sum_i\mu_{i,i}\q_1(\alpha_i)^2|0\rangle-\q_1(1)\q_1(x)|0\rangle.
	\end{equation*}
	We omit this part of the proof. We conclude that $\Dim(H^{2,2}(S^{[2]}, \Q))=\frac{(r+1)r}{2}+r+2$ and a basis of the $\Q$-vector space $H^{2,2}(S^{[2]}, \Q)$ is the one presented in the statement of the theorem.
\end{proof}
\begin{rmk} \label{rmk general K3}
	Note that the assumption that $S$ is a \emph{general} K3 surface, i.e., $E_S \cong \Q$, is necessary in the statement of Theorem \ref{thm dim H2,2(X, Q) per K3 qualsiasi}. If $S$ is general, as seen in the proof, the component $\text{Sym}^2(T(S)_{\Q}[2])$ in $(\ref{eq decomposition Sym2(H2)})$ gives only the element (\ref{eq id}) in a basis of $H^{2,2}(S^{[2]}, \Q)$ (we have substituted it with $\delta^2$ in the end of the proof). If $S$ is not general, $E_S$ is bigger than $\Q$, hence the component $\text{Sym}^2(T(S)_{\Q}[2])$ in $(\ref{eq decomposition Sym2(H2)})$ gives not only (\ref{eq id}), but also other elements in a basis of $H^{2,2}(S^{[2]}, \Q)$.
\end{rmk}
One can combine Theorem \ref{thm dim H2,2(X, Q) per K3 qualsiasi} with Theorem \ref{thm Qin--Wang} to find a basis, in terms of Nakajima operators, of the lattice $H^{2,2}(S^{[2]}, \Z)$ of integral Hodge classes of type $(2, 2)$ of a \emph{generic} K3 surface $S$, which is by definition a general projective K3 surface with Picard group of rank $1$. The proof of this result is very similar to the one of Theorem \ref{thm H^2,2(X, Z) generico}, which generalises it to the Hilbert square of \emph{any} general projective K3 surface. For this reason, we only sketch the proof: details can be found in the author's PhD thesis, see \cite[Theorem 3.3.17]{novario2021ths}. While Theorem \ref{thm H^2,2(X, Z) generico}, as we will see, needs Theorem \ref{thm 2/5q con op Nakajima} to be proven, Theorem\,\,\ref{thm H^22(X, Z) prima versione} can be shown without using it.
\begin{thm} \label{thm H^22(X, Z) prima versione}
	Let $S$ be a generic K3 surface and $h \in \Pic(S)$ be the ample generator of $\Pic(S)$. Then a basis of the lattice $H^{2,2}(S^{[2]}, \Z)$ of integral Hodge classes of type $(2, 2)$ is given by the following elements:
	\begin{equation} \label{eq basis H^2,2Z very general S}
		\begin{array}{c}
			\q_2(h)|0\rangle, \qquad \q_1(1)\q_1(x)|0\rangle, \qquad \frac{1}{2}\left( \q_1(h)^2-\q_2(h)\right) | 0 \rangle, \\[1ex]
			-\displaystyle\sum_{i<j}\mu_{i,j}\q_1(\alpha_i)\q_1(\alpha_j)|0\rangle-\frac{1}{2}\displaystyle\sum_i \mu_{i,i}\q_1(\alpha_i)^2|0\rangle-\q_1(1)\q_1(x)|0\rangle,
		\end{array}
	\end{equation}
	where $x \in H^4(S, \Z)$ is the class of a point, $1 \in H^0(S, \Z)$ is the unit, $\{\alpha_1, \dots, \alpha_{22}\}$ is the basis of $H^2(S, \Z)$ used in Lemma $\ref{lem calcolo mu_ij}$ and the $\mu_{i,j}'s$ are the integers given by Table $\ref{tabella mu}$.
\end{thm}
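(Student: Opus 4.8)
The plan is to show that the sublattice $M \subseteq H^4(S^{[2]}, \Z)$ generated by the four classes in $(\ref{eq basis H^2,2Z very general S})$ — whose last member equals $\delta^2$ by Lemma \ref{lem cup products Lehn-Sorger 3} — is exactly $H^{2,2}(S^{[2]}, \Z)$. Since $H^{2,2}(S^{[2]}, \Z) = H^{2,2}(S^{[2]}, \Q) \cap H^4(S^{[2]}, \Z)$ is by construction a saturated (primitive) sublattice of $H^4(S^{[2]}, \Z)$ with $\Q$-span $H^{2,2}(S^{[2]}, \Q)$, it is enough to establish: (i) $M$ is contained in $H^4(S^{[2]}, \Z)$ and consists of Hodge classes of type $(2,2)$; (ii) $M_{\Q} = H^{2,2}(S^{[2]}, \Q)$; and (iii) $M$ is itself primitive in $H^4(S^{[2]}, \Z)$. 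Granting these, the saturation of $M$ in $H^4(S^{[2]}, \Z)$ is $M_{\Q} \cap H^4(S^{[2]}, \Z) = H^{2,2}(S^{[2]}, \Z)$, while primitivity forces $M$ to equal its own saturation.

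For (i) and (ii) I would note that each generator lies in the $\Q$-span of the basis of $H^{2,2}(S^{[2]}, \Q)$ furnished by Theorem \ref{thm dim H2,2(X, Q) per K3 qualsiasi} with $r = 1$ and $b_1 = h$: the classes $\q_1(1)\q_1(x)|0\rangle$ and $\delta^2$ are two of those basis vectors, while $\q_2(h)|0\rangle = 2 \cdot \frac{1}{2}\q_2(h)|0\rangle$ and $\frac{1}{2}(\q_1(h)^2 - \q_2(h))|0\rangle = \frac{1}{2}\q_1(h)^2|0\rangle - \frac{1}{2}\q_2(h)|0\rangle$, so $M \subseteq H^{2,2}(S^{[2]}, \Q)$; the change-of-basis matrix between the two systems is triangular with diagonal entries $2,1,1,1$, hence nonsingular, so $M_{\Q}$ is all of $H^{2,2}(S^{[2]}, \Q)$, which has dimension $4$ by that theorem. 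Integrality of three of the generators is immediate from Theorem \ref{thm Qin--Wang}: $\q_1(1)\q_1(x)|0\rangle$ is a member of the integral basis of $H^4(S^{[2]}, \Z)$, and $\q_2(h)|0\rangle = \sum_i e_i \q_2(\alpha_i)|0\rangle$ (writing $h = \sum_i e_i\alpha_i$) and $\delta^2$ are $\Z$-combinations of that basis, using for $\delta^2$ the expression of Lemma \ref{lem cup products Lehn-Sorger 3} together with the evenness of every $\mu_{i,i}$ in Table \ref{tabella mu}; integrality of $\frac{1}{2}(\q_1(h)^2 - \q_2(h))|0\rangle$ will drop out of the next step.

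For (iii) — the \emph{crux} — I would exploit that $H^2(S, \Z)$ is an even unimodular lattice. As $h$ is primitive, it extends to a $\Z$-basis $\{h = \gamma_1, \gamma_2, \dots, \gamma_{22}\}$ of $H^2(S, \Z)$, and Theorem \ref{thm Qin--Wang}, being valid for an arbitrary integral basis, produces an integral basis of $H^4(S^{[2]}, \Z)$ consisting of $\q_1(1)\q_1(x)|0\rangle$, the $\q_2(\gamma_i)|0\rangle$, the $\q_1(\gamma_i)\q_1(\gamma_j)|0\rangle$ with $i<j$, and the $\m_{1,1}(\gamma_i)|0\rangle = \frac{1}{2}(\q_1(\gamma_i)^2 - \q_2(\gamma_i))|0\rangle$. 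Relative to this basis, three of our generators are themselves basis vectors, namely $\q_2(h)|0\rangle = \q_2(\gamma_1)|0\rangle$, $\q_1(1)\q_1(x)|0\rangle$, and $\frac{1}{2}(\q_1(h)^2 - \q_2(h))|0\rangle = \m_{1,1}(\gamma_1)|0\rangle$ (which settles the integrality left open above). The fourth generator expands, by the computations of Lemma \ref{lem calcolo mu_ij} and Lemma \ref{lem cup products Lehn-Sorger 3} applied verbatim to the basis $\{\gamma_i\}$, as $\delta^2 = -\q_1(1)\q_1(x)|0\rangle - \sum_i \nu_{i,i}\m_{1,1}(\gamma_i)|0\rangle - \frac{1}{2}\sum_i \nu_{i,i}\q_2(\gamma_i)|0\rangle - \sum_{i<j}\nu_{i,j}\q_1(\gamma_i)\q_1(\gamma_j)|0\rangle$, where $(\nu_{i,j})$ is the inverse of the Gram matrix of $\{\gamma_i\}$: an integer matrix of determinant $\pm 1$, and moreover the Gram matrix of the dual $\Z$-basis of $H^2(S, \Z)$, which is even, so its diagonal is even. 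Subtracting the three basis-vector generators from $\delta^2$ leaves a vector whose coordinates are precisely all the $\nu_{i,j}$ with $i<j$ together with all the $\nu_{i,i}$ with $i \ge 2$ — that is, every entry of $(\nu_{i,j})$ except $\nu_{1,1}$; were a prime $p$ to divide them all, $(\nu_{i,j})$ would reduce mod $p$ to a matrix of rank $\le 1$, contradicting $\det(\nu_{i,j}) = \pm 1$. Hence this residual vector is primitive, so $M$, being spanned by three members of a $\Z$-basis of $H^4(S^{[2]}, \Z)$ and a vector primitive modulo them, is primitive; combined with (i) and (ii) this yields $M = H^{2,2}(S^{[2]}, \Z)$. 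I anticipate the real obstacle to be exactly this last step: choosing an $h$-adapted integral basis of $H^2(S, \Z)$ so that Qin--Wang turns three of the generators into coordinate vectors, and then distilling primitivity of $\delta^2$ relative to them purely from the unimodularity — equivalently, the $\Z$-invertibility of the Gram matrix — of $H^2(S, \Z)$.
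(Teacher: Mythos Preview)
Your argument is correct and follows essentially the same strategy as the paper: extend $h$ to an integral basis of $H^2(S,\Z)$, invoke Qin--Wang so that three of the four generators become coordinate vectors, and then check that $\delta^2$ is primitive modulo them. The only substantive difference is in that last step: the paper fixes the concrete embedding $h=\alpha_{17}+t\alpha_{18}$ and exploits the explicit coefficient $\mu_{21,22}=1$ from Table~\ref{tabella mu} to swap $\q_1(\alpha_{21})\q_1(\alpha_{22})|0\rangle$ out for $\delta^2$ in the Qin--Wang basis, whereas you work with an arbitrary extension $\{\gamma_i\}$ and extract primitivity from the fact that the inverse Gram matrix $(\nu_{i,j})$ has determinant $\pm1$, so its off-$(1,1)$ entries cannot share a common prime factor. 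Your route is slightly more conceptual and avoids the uniqueness-of-embedding citation; the paper's is more explicit and feeds directly into the basis~(\ref{eq basis proof H4}) displayed there.
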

\begin{proof}[Sketch of the proof]
	After a slight modification of the basis given in Theorem \ref{thm dim H2,2(X, Q) per K3 qualsiasi}, we see that (\ref{eq basis H^2,2Z very general S}) is a basis of $H^{2,2}(S^{[2]}, \Q)$. In order to prove the theorem, we give a basis of the lattice $H^4(S^{[2]}, \Z)$ which contains the elements in (\ref{eq basis H^2,2Z very general S}). The Picard group $\Pic(S) \cong \Z h$ can be primitively embedded in a unique way up to isometries in $H^2(S, \Z)$, see \cite[Theorem 14.1.12]{huybrechts2016lectures}: we can identify $h$ with $\alpha_{17}+t\alpha_{18}$. After some computations, using Theorem \ref{thm Qin--Wang} and the fact that $\mu_{21, 22}=1$ by Table \ref{tabella mu} (compare with the proof of Theorem \ref{thm H^2,2(X, Z) generico}), it is possible to show that the following is a basis of $H^4(S^{[2]}, \Z)$:
	\begin{equation} \label{eq basis proof H4}
		\begin{array}{l}
			\q_1(1)\q_1(x)|0\rangle, \\[1ex]
			\q_2(\beta_i)|0\rangle \,\,\text{for}\,\,i=1, \dots , 22, \\[1ex]
			\q_1(\beta_i)\q_1(\beta_j)|0\rangle \,\,\text{for}\,\,1\le i<j\le 22 \,\,\,\text{and} \,\, (i,j) \neq (21, 22),\\[1ex]
			\frac{1}{2}\left( \q_1(\beta_i)^2-\q_2(\beta_i)\right)|0\rangle \,\,\text{for}\,\,i=1, \dots , 22, \\[1ex]
			-\displaystyle\sum_{i<j}\mu_{i,j}\q_1(\alpha_i)\q_1(\alpha_j)|0\rangle -\frac{1}{2}\displaystyle\sum_i \mu_{i,i}\q_1(\alpha_i)^2|0\rangle -\q_1(1)\q_1(x)|0\rangle,
		\end{array}
	\end{equation}
where $\beta_i:=\alpha_i$ for $i \neq 17$ and $\beta_{17}:=\alpha_{17}+t\alpha_{18}$. Since the elements in (\ref{eq basis H^2,2Z very general S}) are contained in (\ref{eq basis proof H4}) and $\beta_{17}=h$, we conclude that (\ref{eq basis H^2,2Z very general S}) is a basis of $H^{2,2}(S^{[2]}, \Z)$.
\end{proof}
	\section{Second Chern class of the Hilbert square of a K3 surface} \label{section second Chern}
	Let $S$ be a projective K3 surface and $X:=S^{[2]}$ be its Hilbert square. In this section we look for a representation of the second Chern class $c_2(X)$ of $X$ in terms of Nakajima operators: this will be fundamental to find a basis of the lattice $H^{2,2}(X, \Z)$ of integral Hodge classes of type $(2, 2)$ on the Hilbert square of a general projective K3 surface. Let $q_X^{\vee} \in H^{2,2}(X, \Q)$ be the dual of the BBF-form. As always we denote by $\delta \in \Pic(X)$ the class such that $2\delta$ is the class of the exceptional divisor of the Hilbert--Chow morphism. First of all, the following proposition gives a basis of $H^{2,2}(X, \Q)$ which does not depend on Nakajima operators, where $X$ is the Hilbert square of a generic K3 surface: we omit the proof.
	\begin{prop} \label{prop h^2 delta^2 hdelta q lin ind}
		Let $X=S^{[2]}$ be the Hilbert square of a generic K3 surface and let $h \in \Pic(X)$ be the class induced by the ample generator of $\Pic(S)$. Then $\{h^2, h\delta, \delta^2, \frac{2}{5}q_X^{\vee} \}$ is a basis of the $\Q$-vector space $H^{2,2}(X, \Q)$.
	\end{prop}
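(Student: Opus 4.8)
The plan is to reduce the statement to a single determinant computation. By Theorem~\ref{thm dim H2,2(X, Q) per K3 qualsiasi} applied with $r=1$ (a generic K3 surface has $\Pic(S)$ of rank $1$), the $\Q$-vector space $H^{2,2}(S^{[2]},\Q)$ has dimension $\frac{(1+1)\cdot 1}{2}+1+2=4$, so it is enough to prove that the four classes $h^2$, $h\delta$, $\delta^2$, $\frac{2}{5}q_X^{\vee}$ are linearly independent. They all belong to $H^{2,2}(X,\Q)$: the first three because $h,\delta\in\Pic(X)\subset H^{1,1}(X)$, and $\frac{2}{5}q_X^{\vee}$ by Proposition~\ref{prop O'Grady c_2(X)}.

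To test independence I would compute the Gram matrix of the ordered tuple $(h^2,h\delta,\delta^2,\frac{2}{5}q_X^{\vee})$ with respect to the intersection pairing $\langle\,\cdot\,,\,\cdot\,\rangle$ on $H^4(X,\Q)$ and show it is invertible. The inputs I need are the BBF values $q_X(\delta)=-2$ (the case $n=2$ of the relation $q_{S^{[n]}}(\delta)=-2(n-1)$ recalled in Section~\ref{section IHS gen}), $(h,\delta)=0$ (because $H^2(S^{[2]},\Z)=i(H^2(S,\Z))\oplus\Z\delta$ is an orthogonal decomposition for the BBF form and $h$ lies in the first summand), and $q_X(h)=h^2=:d>0$, where $d=2t$ if $S$ has degree $2t$. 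Feeding these into the formula of Proposition~\ref{prop rmk 2.1 OG} for the pairings among $h^2,h\delta,\delta^2$, and into Proposition~\ref{prop O'Grady int q_Xvee} (which also records $\langle q_X^{\vee},q_X^{\vee}\rangle=23\cdot 25$) for the pairings involving $q_X^{\vee}$, one obtains the Gram matrix
\[
\begin{pmatrix}
3d^2 & 0 & -2d & 10d\\
0 & -2d & 0 & 0\\
-2d & 0 & 12 & -20\\
10d & 0 & -20 & 92
\end{pmatrix}.
\]

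Expanding the determinant along the second row leaves $-2d$ times the $3\times 3$ minor obtained by deleting the second row and column, and that minor evaluates to $1344\,d^2$; hence the determinant equals $-2688\,d^3$, which is nonzero because $h$ is ample, so $d=h^2>0$. Therefore the Gram matrix is non-degenerate, the four classes are linearly independent, and, being four linearly independent vectors in a $4$-dimensional space, they form a basis of $H^{2,2}(X,\Q)$.

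I do not expect a genuine obstacle here: the mathematical content is already packaged in Theorem~\ref{thm dim H2,2(X, Q) per K3 qualsiasi} and in O'Grady's intersection formulas from Section~\ref{section IHS K32 type}, so the argument is essentially the bookkeeping of one $4\times 4$ determinant. The only point requiring a little care is to read off the BBF pairings $q_X(\delta)=-2$ and $(h,\delta)=0$ correctly before invoking Propositions~\ref{prop rmk 2.1 OG} and~\ref{prop O'Grady int q_Xvee}; an alternative route would be to expand each of the four classes in the Nakajima basis of Theorem~\ref{thm dim H2,2(X, Q) per K3 qualsiasi} and check that the change-of-basis matrix is invertible, but the Gram matrix computation is cleaner and self-contained.
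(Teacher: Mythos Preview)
Your argument is correct. The paper omits the proof of this proposition, but your approach is exactly the natural one given the tools assembled in Section~\ref{section IHS K32 type}: invoke Theorem~\ref{thm dim H2,2(X, Q) per K3 qualsiasi} for the dimension, then verify linear independence via the Gram matrix using Propositions~\ref{prop rmk 2.1 OG} and~\ref{prop O'Grady int q_Xvee}. Your Gram matrix and determinant are correct (and indeed, with $d=2t$, the value $-2688d^3$ is consistent, after the change of basis to \eqref{eq H^2,2 generic K3}, with the discriminant $84t^3$ recorded in Theorem~\ref{thm H^2,2(X, Z) generico}).
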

Theorem \ref{thm H^22(X, Z) prima versione} gives us a basis of the lattice $H^{2,2}(S^{[2]}, \Z)$, where $S$ is a generic K3 surface, in terms of Nakajima operators. It is natural to wonder how to describe this basis without Nakajima operators, in particular we want to obtain a basis of $H^{2,2}(S^{[2]}, \Z)$ in terms of some rational linear combination of $h^2, h\delta, \delta^2, \frac{2}{5}q_X^{\vee} \in H^{2,2}(S^{[2]}, \Q)$ of Proposition\,\,\ref{prop h^2 delta^2 hdelta q lin ind}. Note that using Lemma \ref{lem cup products Lehn-Sorger} we can represent $h^2$, $h\delta$ and $\delta^2$ in terms of Nakajima operators, while we do not know how to write $\q_1(1)\q_1(x)|0\rangle$ of Theorem \ref{thm H^22(X, Z) prima versione} in terms of $h^2, h\delta, \delta^2, \frac{2}{5}q_X^{\vee}$. Hence we need to express $\frac{2}{5}q_X^{\vee} \in H^{2,2}(X, \Z)$ in terms of Nakajima operators. Recall that $\frac{6}{5}q_X^{\vee}=c_2(X)$ by Proposition \ref{prop O'Grady c_2(X)}, so we look for a representation of $c_2(X) \in H^{2, 2}(X, \Z)$ in terms of Nakajima operators. The tool that we use is the EGL formula of Proposition \ref{prop EGL}

Let $S$ be a projective K3 surface. We denote by $\mathcal{T}_2:=\mathcal{T}_{S^{[2]}}$ the tangent bundle of $S^{[2]}$. From now on, we will denote by $\,\cdot\,$ the cup product. We define the following operator on the cohomology ring $H^*(S^{[2]}, \Q)$:  
\begin{equation*}
	\ch(\mathcal{T}_2): H^*(S^{[2]}, \Q) \rightarrow H^*(S^{[2]}, \Q), \qquad x \mapsto \ch(\mathcal{T}_2) \cdot x.
\end{equation*}
By the general construction of $S^{[n, n+k]}$ seen in Section $\ref{section Nakajima}$, if $\Delta \subset S \times S$ is the diagonal, we have $S^{[1, 2]} \cong \text{Bl}_{\Delta}(S^2)$. Diagram $(\ref{diagram EGL})$ for $n=1$ gives
\begin{equation} \label{diagram EGL n=1}
	\begin{tikzcd}
		& S^{[1, 2]} \arrow[bend right=10,swap]{ddl}{\varphi}   \arrow[bend left=10]{ddr}{\rho} \arrow[d, "\sigma"]  \arrow[r, "\psi"] & S^{[2]} \\ 
		& S \times S \arrow[dl, "p"] \arrow{dr}[swap]{q} & \\
		S & \Delta \arrow[u, hook, "\iota"] & S.
	\end{tikzcd}	
\end{equation}
Note that the morphisms $\varphi, \rho$ and $\psi$ appearing in diagram $(\ref{diagram EGL n=1})$ correspond to the morphisms $\varphi, \rho$ and $\psi$ of diagram $(\ref{diagram Nakajima projections})$, with $n=k=1$, precomposed with the inclusion of $S^{[1, 2]}$ in $S \times S \times S^{[2]}$. With this notation, the definition of the Nakajima operator $\q_1$ is the same of $(\ref{eq defn Nakajima op})$ without the component $PD^{-1}[S^{[1, 2]}]$, i.e., for $\alpha, x \in H^*(S)$ we have
\begin{equation*}
	\q_1(\alpha)(x)= \psi_* \left( \varphi^*(x) \cdot \rho^*(\alpha) \right).
\end{equation*}
By properties of cup and cap product, the latter denoted by $\cap$, we have $\varphi^*(x) \cdot \rho^*(\alpha)=PD^{-1}\left( [S^{[1, 2]}] \cap \varphi^*(x) \cdot \rho^*(\alpha)\right)$ in $H^*(S^{[1, 2]})$ for every $\alpha, x \in H^*(S)$. Then we get
\begin{equation*} 
	\begin{array}{lll}
		\ch(\mathcal{T}_2)\cdot \q_1(\alpha)(x) & = & \ch(\mathcal{T}_2) \cdot \psi_* \left( \varphi^*(x) \cdot \rho^*(\alpha) \right) \\[1ex]
		& = & \ch(\mathcal{T}_2) \cdot PD^{-1} \psi_*\left( [S^{[1, 2]}] \cap \varphi^*(x) \cdot \rho^*(\alpha) \right) \\[1ex]
		& = & PD^{-1}\psi_*\left( [S^{[1, 2]}] \cap \psi^*(\ch(\mathcal{T}_2))\cdot \varphi^*(x) \cdot \rho^*(\alpha)\right) \\[1ex]
		& = & PD^{-1}\psi_*\left( [S^{[1, 2]}] \cap \ch(\psi^!\mathcal{T}_2)\cdot \varphi^*(x) \cdot \rho^*(\alpha)\right),
	\end{array}
\end{equation*}
where $\psi_*$ in the first equality is the Gysin homomorphism, while in the other equalities is the pushforward in homology, and the third equality comes from the projection formula. Applying Proposition $\ref{prop EGL}$ with $n=1$ and $\omega_S$ trivial we get:
\begin{equation} \label{eq Chern character T2 Nakajima R1 R6}
	\begin{array}{lll}
		\ch(\mathcal{T}_2) \cdot \q_1(\alpha)(x) & = & PD^{-1}\psi_*\left( [S^{[1, 2]}] \cap \varphi^*(\ch(\mathcal{T}_S)\cdot x)\cdot \rho^*(\alpha) \right) \\[1ex]
		& & + PD^{-1}\psi_*\left( [S^{[1, 2]}] \cap \ch(\mathcal{L})\cdot \varphi^*(x) \cdot \rho^*(\alpha)\right) \\[1ex]
		& & -PD^{-1}\psi_* \left( [S^{[1, 2]}] \cap \ch(\mathcal{L})\cdot \sigma^*(\ch(\Ol_{\Delta}^{\vee})) \cdot \varphi^*(x) \cdot \rho^*(\alpha) \right) \\[1ex]
		& & +PD^{-1}\psi_*\left( [S^{[1, 2]}] \cap \ch(\mathcal{L^{\vee}})\cdot \varphi^*(x) \cdot \rho^*(\alpha)\right) \\[1ex]
		& & -PD^{-1}\psi_*\left( [S^{[1, 2]}] \cap \ch(\mathcal{L}^{\vee}) \cdot \sigma^*(\ch(\Ol_{\Delta})) \cdot \varphi^*(x) \cdot \rho^*(\alpha)\right) \\[1ex]
		& & -PD^{-1}\psi_*\left( [S^{[1, 2]}]\cap \varphi^*(x) \cdot \rho^*(\ch(2\Ol_S-\mathcal{T}_S))\right),
	\end{array}
\end{equation}
where $\mathcal{L}:=\Ol_{S^{[1, 2]}}(-N)$ and $N$ is the exceptional divisor of the blowing up $\sigma: \text{Bl}_{\Delta}(S^2) \rightarrow S^2$.
If we set $x:=\q_1(1)|0 \rangle$ and $\alpha:=1$, we can use formula (\ref{eq Chern character T2 Nakajima R1 R6}) to compute $c_2(S^{[2]})$ in terms of Nakajima operators. Recall that here the dual is defined by (\ref{eq defn dual}), while we denote by $F^*:=\text{Hom}(F, \Ol_X)$ the classical dual of a coherent sheaf $F$. We now study the duals appearing in the right-hand side of $(\ref{eq Chern character T2 Nakajima R1 R6})$.
\begin{lem} \label{lem duali formula EGL}
	Keep notation as above. Then:
	\begin{enumerate}[label=(\roman*)]
		\item The dual $\mathcal{L}^{\vee}$ is isomorphic to the dual $\mathcal{L}^*=\textnormal{Hom}(\mathcal{L}, \Ol_{S^{[1, 2]}})$.
		\item $\Ol_{\Delta}^{\vee}=\Ol_{\Delta}$.
	\end{enumerate}
\end{lem}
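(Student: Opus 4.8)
The plan is to prove the two parts separately; each is a short consequence of standard properties of the operation $(\ref{eq defn dual})$, and the only real geometric input is the triviality of the canonical bundle of a K3 surface.

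For part (i), I would simply note that $\mathcal{L}=\Ol_{S^{[1,2]}}(-N)$ is locally free of rank one, so $\Ext^{\,i}(\mathcal{L},\Ol_{S^{[1,2]}})=0$ for every $i>0$, while $\Ext^{\,0}(\mathcal{L},\Ol_{S^{[1,2]}})=\mathcal{L}^{*}$ by the definition of the classical dual. Substituting into $(\ref{eq defn dual})$ gives at once $\mathcal{L}^{\vee}=\mathcal{L}^{*}$, in fact already at the level of sheaves; for the record $\mathcal{L}^{*}\cong\Ol_{S^{[1,2]}}(N)$, although this will not be needed.

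For part (ii), the key observation is that the diagonal embedding $\tau_2\colon S\hookrightarrow S\times S$ identifies $\Delta$ with a smooth closed subvariety of codimension $2$ of the smooth variety $S\times S$, hence with a regular embedding (a local complete intersection). I would then invoke the fundamental local isomorphism: for a regular embedding $Z\hookrightarrow X$ of codimension $c$ with $X$ smooth one has $\Ext^{\,i}_{\Ol_X}(\Ol_Z,\Ol_X)=0$ for $i\neq c$, together with a canonical isomorphism $\Ext^{\,c}_{\Ol_X}(\Ol_Z,\Ol_X)\cong\det N_{Z/X}$, where $N_{Z/X}$ is the normal bundle; one may equally well check this by hand by dualising the length-$c$ Koszul resolution of $\Ol_Z$ by locally free $\Ol_X$-modules. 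Applying this with $Z=\Delta$, $X=S\times S$, $c=2$ and feeding the outcome into $(\ref{eq defn dual})$ yields $\Ol_{\Delta}^{\vee}=(-1)^{2}\det N_{\Delta/S\times S}=\det N_{\Delta/S\times S}$. Finally, the normal bundle of the diagonal is canonically isomorphic to the tangent bundle $\mathcal{T}_S$ via $\tau_2$, so $\det N_{\Delta/S\times S}\cong\det\mathcal{T}_S=\omega_S^{\vee}$, which is trivial since $S$ is a K3 surface. Therefore $\Ol_{\Delta}^{\vee}=\Ol_{\Delta}$.

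There is no genuine obstacle in either part; the one substantive ingredient is $\omega_S\cong\Ol_S$, which is exactly what forces the answer in (ii) to equal $\Ol_{\Delta}$ on the nose rather than a nontrivial line-bundle twist of it (for a general surface one would only get $\Ol_{\Delta}^{\vee}=\det N_{\Delta/S\times S}$). I would also state explicitly that these identities are understood in the Grothendieck group $K(S\times S)$ (respectively $K(S^{[1,2]})$), or equivalently at the level of Chern characters, which is all that is used in $(\ref{eq Chern character T2 Nakajima R1 R6})$.
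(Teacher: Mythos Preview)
Your proof is correct and follows essentially the same route as the paper: for (i) both arguments reduce to the vanishing of higher $\Ext$'s against a line bundle, and for (ii) both compute $\Ext^{\,i}(\Ol_\Delta,\Ol_{S\times S})$ via the fundamental local isomorphism for the codimension-$2$ regular embedding $\Delta\hookrightarrow S\times S$ (the paper cites a lemma of Schnell for this), then use $N_{\Delta/S\times S}\cong\mathcal{T}_S$ and triviality of $\omega_S$. Your closing remark that the identities are only needed in the Grothendieck group is a helpful clarification not made explicit in the paper.
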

\begin{proof}
	\begin{enumerate}[label=(\roman*)]
		\item By \cite[Proposition III.6.7]{hartshorne2013algebraic} we have $\Ext^{\,i}(\mathcal{L}, \Ol_{S^{[1, 2]}}) \cong \Ext^{\,i}(\Ol_{S^{[1, 2]}}, \Ol_{S^{[1, 2]}}) \otimes \mathcal{L}^*$. Moreover, by \cite[Proposition III.6.3]{hartshorne2013algebraic} we have that $\Ext^{\,i}(\Ol_{S^{[1, 2]}}, \Ol_{S^{[1, 2]}})$ is $0$ for $i>0$ and it is equal to $\Ol_{S^{[1, 2]}}$ if $i=0$. We conclude that $\sum_i(-1)^i\Ext^{\,i}(\mathcal{L}, \Ol_{S^{[1, 2]}}) \cong \mathcal{L}^*$.
		\item We apply \cite[Lemma 1]{schnellserre} with $X=S \times S$, $Z=\Delta$ and $\mathcal{L}=\Ol_{S \times S}$. We have $\Ext^{\,i}(\Ol_{\Delta}, \Ol_{S \times S})=0$ if $i \neq 2$ and $\Ext^{\,2}(\Ol_{\Delta}, \Ol_{S \times S})=\text{det}\,N_{\Delta| S \times S}$. Moreover, $N_{\Delta|S \times S}=\mathcal{T}_S$ and $\text{det}\,\mathcal{T}_S \cong \Ol_S$ since $S$ is a K3 surface. We conclude that $\sum_i(-1)^1\Ext^{\,i}(\Ol_{\Delta}, \Ol_{S \times S})=\Ol_{\Delta}$, as we wanted.
	\end{enumerate}
\end{proof}
We recall the computation of the Chern character of $\Ol_{\Delta}:=i_*\Ol_{\Delta}$, where $i: \Delta \hookrightarrow S \times S$ is the inclusion.
\begin{lem} \label{lem ch(O_Delta)}
	Let $S$ be a K3 surface and $\Delta \subset S \times S$ be the diagonal. Denote by $[\Delta] \in H^4(S \times S, \Z)$ the fundamental cohomological class of $\Delta$ in $S\times S$. Then $\ch(\Ol_{\Delta})=[\Delta]-2y$, where $y \in H^8(S \times S, \Q)$ is the class of a point in $S \times S$.
\end{lem}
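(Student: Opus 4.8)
The plan is to apply the Grothendieck--Riemann--Roch theorem to the closed immersion $i\colon\Delta\hookrightarrow S\times S$ (equivalently, the Riemann--Roch formula for a regular embedding). Since $\Delta$ is smooth and the structure sheaf of $\Delta$ has trivial Chern character on $\Delta$ itself, GRR together with the projection formula gives
\[
\ch(\Ol_\Delta)=i_*\!\left(\td(\mathcal{T}_\Delta)\cdot i^*\td(\mathcal{T}_{S\times S})^{-1}\right),
\]
so the statement reduces to evaluating the two Todd classes appearing on the right-hand side.

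First I would compute $\td(\mathcal{T}_\Delta)$. Identifying $\Delta$ with the K3 surface $S$, one has $c_1(\mathcal{T}_\Delta)=0$ and $c_2(\mathcal{T}_\Delta)=24\,[\mathrm{pt}_\Delta]$, the latter being the topological Euler number of a K3 surface, hence $\td(\mathcal{T}_\Delta)=1+\tfrac1{12}c_2(\mathcal{T}_\Delta)=1+2\,[\mathrm{pt}_\Delta]$, where $[\mathrm{pt}_\Delta]\in H^4(\Delta,\Z)$ denotes the class of a point. Next, since $\mathcal{T}_{S\times S}\cong p_1^*\mathcal{T}_S\oplus p_2^*\mathcal{T}_S$ and both projections restrict to the identity on $\Delta$, we get $i^*\mathcal{T}_{S\times S}\cong\mathcal{T}_\Delta\oplus\mathcal{T}_\Delta$, hence $i^*\td(\mathcal{T}_{S\times S})=\td(\mathcal{T}_\Delta)^2=(1+2[\mathrm{pt}_\Delta])^2=1+4[\mathrm{pt}_\Delta]$, using $[\mathrm{pt}_\Delta]^2=0$. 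Inverting in $H^*(\Delta,\Q)$ yields $i^*\td(\mathcal{T}_{S\times S})^{-1}=1-4[\mathrm{pt}_\Delta]$, so that $\td(\mathcal{T}_\Delta)\cdot i^*\td(\mathcal{T}_{S\times S})^{-1}=(1+2[\mathrm{pt}_\Delta])(1-4[\mathrm{pt}_\Delta])=1-2[\mathrm{pt}_\Delta]$.

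Finally, applying $i_*$ gives $i_*(1)=[\Delta]$ by definition of the fundamental cohomological class and $i_*([\mathrm{pt}_\Delta])=y$, the class of a point in $S\times S$; therefore $\ch(\Ol_\Delta)=[\Delta]-2y$. In particular there is no $H^6$-component, consistently with the vanishing of $c_1$ on both K3 factors. I do not expect any real obstacle here: this is a short Grothendieck--Riemann--Roch bookkeeping computation, and the only points requiring care are the identification of $\Delta$ with $S$ and the behaviour of point classes under $i_*$, together with the standard input $c_2(S)=24$ (equivalently $\chi(\Ol_S)=2$), which is what produces the coefficient $2$.
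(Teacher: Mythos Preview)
Your proof is correct and follows essentially the same route as the paper: both apply Grothendieck--Riemann--Roch to the closed immersion $i\colon\Delta\hookrightarrow S\times S$, reduce to computing $i_*\bigl(\td(S)\cdot i^*\td(S\times S)^{-1}\bigr)$, and use $\td(S)=1+2x$ together with $i^*\mathcal{T}_{S\times S}\cong\mathcal{T}_\Delta^{\oplus2}$. The only cosmetic difference is that the paper compresses your explicit product $(1+2[\mathrm{pt}_\Delta])(1-4[\mathrm{pt}_\Delta])$ into the single observation $\td(S)\cdot\td(S)^{-2}=\td(S)^{-1}$ before inverting.
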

\begin{proof}
	Let $i: \Delta \hookrightarrow S \times S$ be the inclusion. By Grothendieck--Riemann--Roch Theorem, see \cite[Theorem 1]{atiyah1959riemann} and \cite{hirzebruch1962riemann}, we have
	\begin{equation*}
		\begin{array}{lll}
			\ch(\Ol_{\Delta}) & = & i_*\left(\ch(\Ol_{\Delta}) \cdot \td(S)\right)\cdot \td(S \times S)^{-1} \\
			& = & i_*\left(\td(S) \cdot i^*\td(S \times S)^{-1}\right) \\
			& = & i_*\left(\td(S)^{-1}\right),
		\end{array}
	\end{equation*}
	where $i_*$ is the Gysin homomorphism and $\Ol_{\Delta}$ in the left-hand side is $i_*\Ol_{\Delta}$. Let $x \in H^4(S, \Q)$ be the class of a point of $S$. The Todd class of a K3 surface is $\text{td}(S)=1+2x$, hence we obtain $\ch(\Ol_{\Delta})=[\Delta]-2y$.
\end{proof}
Consider formula $(\ref{eq Chern character T2 Nakajima R1 R6})$. We introduce the following notation: 
\begin{equation*}
	\begin{array}{l}
		L1:=\ch(\mathcal{T}_2)\cdot \q_1(\alpha)(x), \\[0.8ex]
		R1:= PD^{-1}\psi_{*}\left([S^{[1, 2]}]\cap \varphi^* (\ch(\mathcal{T}_S)\cdot x)\cdot \rho^*(\alpha)\right), \\[0.8ex]
		R2:= PD^{-1}\psi_{*}\left([S^{[1, 2]}]\cap \ch(\mathcal{L})\cdot \varphi^* (x) \cdot \rho^*(\alpha)\right), \\[0.8ex]
		R3:= PD^{-1}\psi_{*}\left([S^{[1, 2]}] \cap \ch(\mathcal{L}) \cdot \sigma^*(\ch(\Ol^{\vee}_{\Delta}))\cdot \varphi^*(x)\cdot \rho^*(\alpha)\right), \\[0.8ex]
		R4:= PD^{-1}\psi_{*}\left([S^{[1, 2]}] \cap \ch(\mathcal{L}^{\vee})\cdot \varphi^*(x)\cdot \rho^*(\alpha)\right), \\[0.8ex]
		R5:= PD^{-1}\psi_{*}\left([S^{[1, 2]}]\cap \ch(\mathcal{L}^{\vee})\cdot \sigma^*(\ch(\Ol_{\Delta}))\cdot \varphi^*(x) \cdot \rho^*(\alpha)\right), \\[0.8ex]
		R6:= PD^{-1}\psi_{*}\left([S^{[1, 2]}]\cap \varphi^*(x)\cdot \rho^*(\ch(2\Ol_S-\mathcal{T}_S)\cdot x)\right).
	\end{array}
\end{equation*}
Recall that we have taken $x=\q_1(1)|0\rangle$ and $\alpha=1$. We can now compute $c_2(S^{[2]})$ in terms of Nakajima operators.
\begin{prop} \label{prop c2(X) con operatori Nakajima}
	Let $S$ be a projective K3 surface. Then the second Chern class $c_2(S^{[2]}) \in H^{2,2}(S^{[2]}, \Z)$ of $S^{[2]}$ in terms of Nakajima operators is
	\begin{equation} \label{eq c_2(X)}
		c_2(S^{[2]})=27 \q_1(1)\q_1(x)|0\rangle +3 \displaystyle\sum_{i<j}\mu_{i,j}\q_1(\alpha_i)\q_1(\alpha_j)|0\rangle + \frac{3}{2} \displaystyle\sum_i \mu_{i,i}\q_1(\alpha_i)^2|0\rangle,
	\end{equation}
	where $1 \in H^0(S, \Z)$ is the unit and $x \in H^4(S, \Z)$ is the class of a point, $\{\alpha_1, \dots , \alpha_{22}\}$ is the basis of the lattice $H^2(S, \Z)$ used in Lemma $\ref{lem calcolo mu_ij}$ and the $\mu_{i,j}$'s are given in Table $\ref{tabella mu}$.
\end{prop}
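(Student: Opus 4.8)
The plan is to specialise the operator identity (\ref{eq Chern character T2 Nakajima R1 R6}) to the input vector $1_S=\q_1(1)|0\rangle$ and $\alpha=1$ (as fixed just before the statement), to compute the six resulting classes $R1,\dots,R6\in H^*(S^{[2]},\Q)$ in terms of Nakajima operators, and then to read off the degree-$4$ component. Since $\q_1(1)(1_S)=\q_1(1)^2|0\rangle=2\cdot 1_{S^{[2]}}$ by (\ref{formula 1sn Nakajima}), the left-hand side of (\ref{eq Chern character T2 Nakajima R1 R6}) equals $2\,\ch(\mathcal{T}_2)$; and because $S^{[2]}$ has trivial canonical bundle, $c_1(\mathcal{T}_2)=0$, so the degree-$4$ part of $\ch(\mathcal{T}_2)$ equals $-c_2(S^{[2]})$. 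Hence $-2\,c_2(S^{[2]})=[R1+R2-R3+R4-R5-R6]_{\deg 4}$, and the task reduces to computing these six contributions.

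First I would handle the two terms not involving the exceptional divisor. On a K3 surface $\ch(\mathcal{T}_S)=2-24x$ and $\ch(2\Ol_S-\mathcal{T}_S)=24x$, where $x\in H^4(S,\Q)$ is the class of a point; using $\varphi^*(1_S)=1$, $\rho^*(1)=1$ and the defining formula $\q_1(\beta)(y)=\psi_*(\varphi^*(y)\cdot\rho^*(\beta))$, one reads off that the degree-$4$ part of $R1$ is $-24\,\q_1(1)\q_1(x)|0\rangle$ and that $R6=24\,\psi_*(\rho^*(x))=24\,\q_1(1)\q_1(x)|0\rangle$, which is already homogeneous of degree $4$ (the last equality also uses that the operators $\q_1$ commute, by Theorem \ref{thm Nakajima}).

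The terms built from $\mathcal{L}$ and $\mathcal{L}^\vee$ are next. By Lemma \ref{lem duali formula EGL} we have $\mathcal{L}^\vee\cong\mathcal{L}^*=\Ol_{S^{[1,2]}}(N)$ and $\Ol_\Delta^\vee=\Ol_\Delta$, so $\ch(\mathcal{L})=e^{-[N]}$, $\ch(\mathcal{L}^\vee)=e^{[N]}$, and $\ch(\Ol_\Delta)=[\Delta]-2y$ by Lemma \ref{lem ch(O_Delta)}. The blow-up structure $S^{[1,2]}\cong\mathrm{Bl}_\Delta(S^2)$ gives $\sigma^*[\Delta]=-[N]^2$, where the vanishing $c_1(\mathcal{T}_S)=0$ enters. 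Combining these, each of $R2,R3,R4,R5$ has degree-$4$ part a multiple of $\psi_*([N]^2)$: one finds $[R2]_{\deg 4}=[R4]_{\deg 4}=\frac12\psi_*([N]^2)$ and $[R3]_{\deg 4}=[R5]_{\deg 4}=-\psi_*([N]^2)$, whence $[R2-R3+R4-R5]_{\deg 4}=3\,\psi_*([N]^2)$. To evaluate $\psi_*([N]^2)$ I would use that $\psi\colon S^{[1,2]}\to S^{[2]}$ is the quotient by the factor-swapping involution of $\mathrm{Bl}_\Delta(S^2)$, ramified exactly along $N$ and with branch divisor the exceptional divisor of the Hilbert--Chow morphism, whose class is $2\delta$; then $\psi^*(2\delta)=2[N]$ and $\psi_*[N]=2\delta$, so the projection formula yields $2\,\psi_*([N]^2)=(2\delta)^2$, i.e.\ $\psi_*([N]^2)=2\delta^2$.

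Assembling all of this gives $c_2(S^{[2]})=24\,\q_1(1)\q_1(x)|0\rangle-3\,\delta^2$, and the last step is to substitute for $\delta^2=\left(\frac12\q_2(1)|0\rangle\right)^2$ the expression of Lemma \ref{lem cup products Lehn-Sorger}(iii), which yields exactly (\ref{eq c_2(X)}). The main obstacle is the computation of $\psi_*([N]^2)$: one must deal with the non-flat, ramified morphism $\psi$ and with the blow-up class $\sigma^*[\Delta]$ carefully, and it is precisely here --- via Lemma \ref{lem cup products Lehn-Sorger}(iii), hence Lemma \ref{lem calcolo mu_ij} --- that the diagonal of $S$ and the coefficients $\mu_{i,j}$ enter the final formula.
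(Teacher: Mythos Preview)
Your argument is correct and reaches the same intermediate identity as the paper, namely $c_2(S^{[2]})=24\,\q_1(1)\q_1(x)|0\rangle-3\,\delta^2$, before substituting Lemma~\ref{lem cup products Lehn-Sorger}\,(iii). The route you take to the terms $R2,\dots,R5$, however, is genuinely different from the paper's. The paper invokes Lehn's observation that the cycle $[S^{[1,2]}]\cap d^{\nu}$ induces the derived operator $\q_1^{(\nu)}$, and then evaluates $\q_1^{(\nu)}(1)\q_1(1)|0\rangle$ recursively via Definition~\ref{Defn boundary op} and Theorem~\ref{thm main Lehn}; in particular it never computes $\sigma^*[\Delta]$ or $\psi_*[N]^2$ explicitly, and the diagonal coefficients $\mu_{i,j}$ enter already in $R3_{\nu=0}$ through the K\"unneth expansion of $[\Delta]$. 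You instead use the blow-up relation $\sigma^*[\Delta]=-[N]^2$ (valid since $c_1(N_{\Delta/S\times S})=c_1(\mathcal{T}_S)=0$) together with the double-cover description of $\psi$ to get $\psi_*([N]^2)=2\delta^2$ via the projection formula; only at the final substitution do the $\mu_{i,j}$ appear. Your approach is shorter and more geometric for $n=2$, while the paper's operator-theoretic computation stays entirely within the Nakajima formalism and is closer to what would generalise to higher $S^{[n]}$. One small caution: the identity $\psi^*\delta=[N]$ and the degree-$2$ behaviour of $\psi_*$ rely on $\psi$ being the quotient by the involution of $\mathrm{Bl}_\Delta(S^2)$ with ramification divisor $N$; this is standard but deserves a reference in a full write-up.
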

Note that by Table $\ref{tabella mu}$ the integers $\mu_{i,i}$ are all even, so (\ref{eq c_2(X)}) is an element of $H^{2,2}(S^{[2]}, \Z)$.
\begin{proof}
	We make some computations on $L1, R1, \dots , R6$ introduced above.
	\begin{itemize}
		\item We have $L1=\ch(\mathcal{T}_2)\cdot \q_1(\alpha)(x)=\ch(\mathcal{T}_2)\cdot \q_1(1)\q_1(1)| 0\rangle$. By the definition of exponential Chern character and by $(\ref{formula 1sn Nakajima})$, which gives $\q_1(1)\q_1(1)|0\rangle =2\cdot 1_{S^{[2]}}$, we obtain
		\begin{equation*}
			L1=8\cdot 1_{S^{[2]}}-2c_2(S^{[2]})+\frac{1}{6}c_2(S^{[2]})^2-\frac{1}{3}c_4(S^{[2]}).
		\end{equation*}
		\item Since $S$ is a K3 surface we have $c_1(S)=0$ and $c_2(S)=24x$, where $x \in H^4(S, \Z)$ is the class of a point on $S$. Hence $\ch(\mathcal{T}_S)=2-24x$ and we obtain
		\begin{equation*}
			\begin{array}{lll}
				R1 & = & 2 \q_1(1)\q_1(1)|0\rangle - 24 \q_1(1)\q_1(x)| 0 \rangle \\[1ex]
				& = & 4\cdot 1_{S^{[2]}}-24\q_1(1)\q_1(x)|0\rangle,
			\end{array}
		\end{equation*}
		where the second equality comes from $(\ref{formula 1sn Nakajima})$.
		\item Let $d:=c_1(\mathcal{L})$. Then we have $\ch(\mathcal{L})=\sum_{\nu \ge 0} \frac{1}{\nu!}d^{\nu}$. Now, \cite[Lemma 3.9]{lehn1999chern} implies that the cycle $[S^{[1, 2]}] \cap d^{\nu}$ induces the operator $\q_1^{(\nu)}$, as observed in the proof of \cite[Lemma 4.2]{lehn1999chern}, hence we obtain
		\begin{equation*}
				R2 = \displaystyle\sum_{\nu \ge 0} \frac{1}{\nu!}\q_1^{(\nu)}(\alpha) \cdot x = \displaystyle\sum_{\nu \ge 0} \frac{1}{\nu!}\q_1^{(\nu)}(1)\q_1(1)|0 \rangle. 
		\end{equation*}
		We now compute $\q_1^{(\nu)}(1)\q_1(1)| 0 \rangle$ for every $\nu \ge 0$. If $\nu=0$, we have
		\begin{equation*}
			\q_1^{(0)}(1)\q_1(1)|0\rangle=\q_1(1)\q_1(1)|0\rangle=2\cdot 1_{S^{[2]}} \in H^0(S^{[2]}, \Z).
		\end{equation*} 
		If $\nu=1$, by Theorem $\ref{thm main Lehn}$ we have
		\begin{equation} \label{eq doppio tilda}
			\q_1^{\prime}(1)\q_1(1)|0\rangle = -\q_2(1)|0\rangle \in H^2(S^{[2]}, \Z).
		\end{equation} 
		If $\nu=2$, we have
		\begin{equation*}
			\begin{array}{lll}
				\q_1^{(2)}(1)\q_1(1)|0\rangle  & = & \left( \partial \q_1^{\prime}-\q_1^{\prime} \partial \right) (1)\q_1(1)|0 \rangle \\[1ex]
				& = & \partial \q_1^{\prime}(1)\q_1(1)|0\rangle -\q_1^{\prime}\partial(1)\q_1(1)|0\rangle.
			\end{array}
		\end{equation*}
		The boundary of $S$ is empty by Remark $\ref{rmk boundary empty}$, so $\q_1^{\prime}\partial(1)\q_1(1)|0 \rangle =0$. Moreover, using $(\ref{eq doppio tilda})$, we get $\partial\q_1^{\prime}(1)\q_1(1)|0\rangle =-\partial\q_2(1)|0\rangle$, and by Definition \ref{Defn boundary op} we obtain
		\begin{equation*}
			\q_1^{(2)}(1)\q_1(1)|0\rangle = \frac{1}{2}\q_2(1)|0\rangle \cdot \q_2(1)|0\rangle \in H^4(S^{[2]}, \Z).
		\end{equation*}
		Similarly for $\nu=3$ and $\nu=4$ we obtain the following:
		\begin{equation*}
			\begin{array}{lll}
				\q_1^{(3)}(1)\q_1(1)| 0 \rangle = -\frac{1}{2}\q_2(1)|0\rangle \cdot \frac{1}{2}\q_2(1)|0\rangle \cdot \q_2(1)|0\rangle \in H^6(S^{[2]}, \Z), \\[1ex]
				\q_1^{(4)}(1)\q_1(1)|0\rangle = \frac{1}{2}\q_2(1)|0\rangle \cdot \frac{1}{2}\q_2(1)|0\rangle \cdot \frac{1}{2} \q_2(1)|0\rangle \cdot \q_2(1)|0\rangle \in H^8(S^{[2]}, \Z).
			\end{array}
		\end{equation*}
		If $\nu \ge 5$, we obtain an element in $H^{2\nu}(S^{[2]}, \Z)=0$. We conclude that
		\begin{equation*}
			\begin{array}{lll}
				R2 & = & 2 \cdot 1_{S^{[2]}}-\q_2(1)|0\rangle +\frac{1}{2}\q_2(1)|0\rangle \cdot \frac{1}{2}\q_2(1)|0\rangle \\[1ex]
				& & - \frac{1}{3}\left( \frac{1}{2}\q_2(1)| 0 \rangle \cdot \frac{1}{2}\q_2(1)|0\rangle \cdot \frac{1}{2}\q_2(1)|0\rangle \right) \\[1ex]
				& & + \frac{1}{12}\left( \frac{1}{2}\q_2(1)|0\rangle \cdot \frac{1}{2}\q_2(1)|0\rangle \cdot \frac{1}{2}\q_2(1)|0\rangle \cdot \frac{1}{2}\q_2(1)|0\rangle \right).
			\end{array}
		\end{equation*}
		\item As before, we set $d:=c_1(\mathcal{L})$. By Lemma $\ref{lem duali formula EGL}$ we have $\ch(\Ol_{\Delta})=\ch(\Ol_{\Delta}^{\vee})$, and by Lemma $\ref{lem ch(O_Delta)}$ we have $\ch(\Ol_{\Delta})=[\Delta]-2y$, where $y \in H^8(S \times S, \Z)$ is the class of a point. Moreover, $[\Delta]=\sum_{i, j} \mu_{i,j}\alpha_i \otimes \alpha_j + 1 \otimes x + x \otimes 1$ by Lemma $\ref{lem calcolo mu_ij}$, where $\{\alpha_1, \dots , \alpha_{22}\}$ is the basis of $H^2(S, \Z)$ used in Lemma $\ref{lem calcolo mu_ij}$ and the $\mu_{i,j}$'s are given in Table $\ref{tabella mu}$. Recall the notation of diagram $(\ref{diagram EGL n=1})$. By the Künneth theorem we have $y=x \otimes x$. Then
		\begin{equation*}
			\begin{array}{lll}
				\sigma^*(\ch(\Ol_{\Delta})) & = & \sigma^*\left( \displaystyle\sum_{i, j} \mu_{i,j}\alpha_i \otimes \alpha_j + 1 \otimes x + x \otimes 1 - 2 (x \otimes x) \right) \\[1.5em]
				& = & \displaystyle\sum_{i,j}\mu_{i,j} \varphi^*(\alpha_i) \cdot \rho^*(\alpha_j) + \varphi^*(1)\cdot \rho^*(x)+\varphi^*(x)\cdot \rho^*(1)-2\left(\varphi^*(x)\cdot \rho^*(x)\right).
			\end{array}
		\end{equation*}
		Proceeding as for $R2$ we get
		\begin{equation} \label{eq R3}
			\begin{array}{lll}
				R3 & = & \displaystyle\sum_{i,j} \mu_{i,j} \displaystyle\sum_{\nu \ge 0} \frac{1}{\nu!}\q_1^{(\nu)}(\alpha_i)\q_1(\alpha_j)|0 \rangle + \displaystyle\sum_{\nu \ge 0} \frac{1}{\nu!}\q_1^{(\nu)}(x)\q_1(1)|0 \rangle\\[1.5em]
				& & +\displaystyle\sum_{\nu \ge 0} \frac{1}{\nu!}\q_1^{(\nu)}(1)\q_1(x)|0\rangle -2\displaystyle\sum_{\nu \ge 0} \frac{1}{\nu!}\q_1^{(\nu)}(x)\q_1(x)|0\rangle.
			\end{array}
		\end{equation}
		We call $R3_{\nu=i}$ the component of $R3$ obtained by putting $\nu=i$ in $(\ref{eq R3})$ for $i \ge 0$. Using the commutativity rule given by Theorem $\ref{thm Nakajima}$ we have
		\begin{equation*}
			R3_{\nu=0}=\displaystyle\sum_{i,j}\mu_{i,j}\q_1(\alpha_i)\q_1(\alpha_j)|0\rangle +2\q_1(1)\q_1(x)|0 \rangle -2\q_1(x)\q_1(x)|0 \rangle. 
		\end{equation*}
		Note that $\q_1(\alpha_i)\q_1(\alpha_j)|0\rangle \in H^4(S^{[2]}, \Z)$ and $\q_1(x)\q_1(x)|0\rangle \in H^8(S^{[2]}, \Z)$. If $\nu \ge 3$ we obtain elements in $H^i(S^{[2]}, \Q)$ with $i \ge 10$, so these are equal to zero.
		We do not compute explicitly $R3_{\nu=1}$: we will see that this is not necessary. If $\nu=2$, using Definition \ref{Defn boundary op} we obtain, after some computations,
		\begin{equation*}
				R3_{\nu=2} = \displaystyle\sum_{i,j}\mu_{i,j} \frac{1}{2}\left( \frac{1}{2}\q_2(1)|0\rangle \cdot \frac{1}{2}\q_2(1)|0\rangle \right) \cdot \q_1(\alpha_i)\q_1(\alpha_j)|0\rangle + \frac{1}{2}\q_2(1)|0\rangle \cdot \frac{1}{2}\q_2(1)|0\rangle \cdot \q_1(1)\q_1(x)|0\rangle,
		\end{equation*}
		which is an element of $H^8(S^{[2]}, \Z)$. We conclude that
		\begin{equation*}
			\begin{array}{lll}
				R3 & = & \displaystyle\sum_{i,j} \mu_{i,j} \q_1(\alpha_i)\q_1(\alpha_j)|0\rangle +2\q_1(1)\q_1(x)|0 \rangle -2\q_1(x)\q_1(x)|0\rangle + R3_{\nu=1}\\[1.3em]
				& & +\frac{1}{2}\displaystyle\sum_{i,j}\mu_{i,j} \frac{1}{2}\q_2(1)|0\rangle \cdot \frac{1}{2}\q_2(1)|0\rangle \cdot  \q_1(\alpha_i)\q_1(\alpha_j)|0\rangle \\[1.3em]
				& & +\frac{1}{2}\q_2(1)|0\rangle \cdot \frac{1}{2}\q_2(1)|0\rangle \cdot \q_1(1)\q_1(x)|0\rangle.
			\end{array}
		\end{equation*}
		\item By Lemma $\ref{lem duali formula EGL}$ we have $\mathcal{L}^{\vee} \cong \text{Hom}(\mathcal{L}, \Ol_{S^{[1, 2]}})$. Hence if $d:=c_1(\mathcal{L})$ we have $\ch(\mathcal{L}^{\vee})=\sum_{\nu \ge 0} \frac{(-1)^{\nu}}{\nu!}d^{\nu}$, so\,\,$R4$ is computed in the same way as $R2$, with a change of sign for the components obtained when $\nu=1$ and $\nu=3$. We obtain
		\begin{equation*}
			\begin{array}{lll}
				R4 & = & 2 \cdot 1_{S^{[2]}}+\q_2(1)|0\rangle +\frac{1}{2}\q_2(1)|0\rangle \cdot \frac{1}{2}\q_2(1)|0\rangle \\[1ex]
				& & + \frac{1}{3}\left( \frac{1}{2}\q_2(1)| 0 \rangle \cdot \frac{1}{2}\q_2(1)|0\rangle \cdot \frac{1}{2}\q_2(1)|0\rangle \right) \\[1ex]
				& & + \frac{1}{12}\left( \frac{1}{2}\q_2(1)|0\rangle \cdot \frac{1}{2}\q_2(1)|0\rangle \cdot \frac{1}{2}\q_2(1)|0\rangle \cdot \frac{1}{2}\q_2(1)|0\rangle \right).
			\end{array}
		\end{equation*}
		\item By Lemma \ref{lem duali formula EGL} we have $\mathcal{L}^{\vee}\cong\text{Hom}(\mathcal{L}, \Ol_{S^{[1, 2]}})$ and $\Ol_{\Delta}^{\vee}=\Ol_{\Delta}$, so $R5$ is computed in the same way as $R3$, with a change of sign for the component $R3_{\nu=1}$, so we obtain
		\begin{equation*}
			\begin{array}{lll}
				R5 & = & \displaystyle\sum_{i,j} \mu_{i,j} \q_1(\alpha_i)\q_1(\alpha_j)|0\rangle +2\q_1(1)\q_1(x)|0 \rangle -2\q_1(x)\q_1(x)|0\rangle - R3_{\nu=1} \\[1.3em]
				& & +\frac{1}{2}\displaystyle\sum_{i,j}\mu_{i,j} \frac{1}{2}\q_2(1)|0\rangle \cdot \frac{1}{2}\q_2(1)|0\rangle \cdot \q_1(\alpha_i)\q_1(\alpha_j)|0\rangle \\[1.3em]
				& & +\frac{1}{2}\q_2(1)|0\rangle \cdot \frac{1}{2}\q_2(1)|0\rangle \cdot \q_1(1)\q_1(x)|0\rangle.
			\end{array}
		\end{equation*}
		\item Since $S$ is a K3 surface, we have $\ch(2\Ol_S-\mathcal{T}_S)=24x$, where $x \in H^4(S, \Z)$ is the class of a point. Then
		\begin{equation*}
			R6=24\q_1(1)\q_1(x)| 0 \rangle ,
		\end{equation*}
		where we have used $\q_1(x)\q_1(1)|0\rangle=\q_1(1)\q_1(x)|0\rangle$ from Theorem $\ref{thm Nakajima}$.
	\end{itemize}
	Thus formula $(\ref{eq Chern character T2 Nakajima R1 R6})$ with $x=\q_1(1)|0\rangle$ and $\alpha=1$ gives
	\begin{equation} \label{eq L1=}
		\begin{array}{lll}
			L1 & = & 8 \cdot 1_{S^{[2]}} \\[1ex]
			& & -52\q_1(1)\q_1(x)|0\rangle +2\left( \frac{1}{2}\q_2(1)|0\rangle \cdot \frac{1}{2}\q_2(1)|0\rangle \right) -2\displaystyle\sum_{i, j}\mu_{i,j} \q_1(\alpha_i)\q_1(\alpha_j)|0\rangle \\[1ex]
			& & +\frac{1}{6}\left( \frac{1}{2}\q_2(1)|0\rangle \cdot \frac{1}{2}\q_2(1)|0\rangle \cdot \frac{1}{2}\q_2(1)|0\rangle \cdot \frac{1}{2}\q_2(1)|0\rangle \right) \\[1ex]
			& &+4\q_1(x)\q_1(x)|0\rangle \\[1ex]
			& & -\displaystyle\sum_{i,j}\mu_{i,j} \frac{1}{2}\q_2(1)|0\rangle \cdot \frac{1}{2}\q_2(1)|0\rangle \cdot \q_1(\alpha_i)\q_1(\alpha_j)|0\rangle \\[1.3em] 
			& & -2 \left( \frac{1}{2}\q_2(1)|0\rangle \cdot \frac{1}{2}\q_2(1)|0\rangle \cdot \q_1(1)\q_1(x)|0\rangle \right), \\
		\end{array}
	\end{equation} 
	where 
	\begin{equation} \label{eq L1}
		L1=8\cdot 1_S^{[2]}-2c_2(S^{[2]})+\frac{1}{6}c_2(S^{[2]})^2-\frac{1}{3}c_4(S^{[2]}).
	\end{equation}
	We now impose equalities between elements belonging to $H^4(S^{[2]}, \Z)$ in the right-hand side of $(\ref{eq L1=})$ and $(\ref{eq L1})$. We obtain
	\begin{equation*}
		c_2(S^{[2]})=26\q_1(1)\q_1(x)|0\rangle -\frac{1}{2}\q_2(1)|0\rangle \cdot \frac{1}{2}\q_2(1)|0\rangle + \displaystyle\sum_{i,j} \mu_{i,j} \q_1(\alpha_i)\q_1(\alpha_j)|0\rangle.
	\end{equation*}
	Using the commutativity rule of Theorem $\ref{thm Nakajima}$ and Lemma $\ref{lem cup products Lehn-Sorger 3}$, we get
	\begin{equation*}
		c_2(S^{[2]})=27\q_1(1)\q_1(x)|0\rangle +3\displaystyle\sum_{i<j}\mu_{i,j} \q_1(\alpha_i)\q_1(\alpha_j)|0\rangle + \frac{3}{2} \displaystyle\sum_i \mu_{i,i} \q_1(\alpha_i)^2 |0\rangle, 
	\end{equation*}
	and we are done.
\end{proof}
	\section{Integral Hodge classes of type $(2,2)$ on Hilbert squares of any K3 surface} \label{section integral Hodge}
	In this section we compute a basis for the lattice $H^{2,2}(S^{[2]}, \Z)$ of integral Hodge classes of type $(2, 2)$ on the Hilbert square of a general projective K3 surface $S$ whose Picard group is known. Proposition \ref{prop c2(X) con operatori Nakajima} implies the following.
\begin{thm} \label{thm 2/5q con op Nakajima}
	Let $S$ be a projective K3 surface and $X=S^{[2]}$ be its Hilbert square. Consider $q_X^{\vee} \in H^{2,2}(X, \Q)$, the dual of the BBF quadratic form. Then
	\begin{equation} \label{eq 2/5qvee con op Nakajima}
		\frac{2}{5}q_X^{\vee} = 9\q_1(1)\q_1(x)|0\rangle + \displaystyle\sum_{i<j}\mu_{i,j}\q_1(\alpha_i)\q_1(\alpha_j)|0\rangle+\frac{1}{2}\displaystyle\sum_i\mu_{i,i}\q_1(\alpha_i)^2|0\rangle \in H^{2,2}(X, \Z),
	\end{equation}
	where $1 \in H^0(S, \Z)$ is the unit, $x \in H^4(S, \Z)$ is the class of a point, $\{\alpha_1, \dots , \alpha_{22}\}$ is the basis of $H^2(S,\Z)$ used in Lemma $\ref{lem calcolo mu_ij}$ and the $\mu_{i,j}$'s are the integers given in Table $\ref{tabella mu}$. Moreover, $\frac{2}{5}q_X^{\vee}$ is indivisible in $H^{2,2}(X, \Z)$ and
	\begin{equation*} 
		\frac{1}{8}\left( \delta^2+\frac{2}{5}q_X^{\vee} \right) = \q_1(1)\q_1(x)|0\rangle \in H^{2,2}(X, \Z).
	\end{equation*}
\end{thm}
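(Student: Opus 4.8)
The plan is to obtain all three assertions as short consequences of Proposition~\ref{prop c2(X) con operatori Nakajima}, Proposition~\ref{prop O'Grady c_2(X)}, Lemma~\ref{lem cup products Lehn-Sorger}, and the Qin--Wang integral basis of $H^4(X,\Z)$ recalled in Theorem~\ref{thm Qin--Wang}. For the formula~(\ref{eq 2/5qvee con op Nakajima}) I would start from $\tfrac{6}{5}q_X^{\vee}=c_2(X)$ (Proposition~\ref{prop O'Grady c_2(X)}), which gives $\tfrac{2}{5}q_X^{\vee}=\tfrac{1}{3}c_2(X)$, and then divide the identity of Proposition~\ref{prop c2(X) con operatori Nakajima} by $3$: the right-hand side becomes exactly that of~(\ref{eq 2/5qvee con op Nakajima}). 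That the class lies in $H^{2,2}(X,\Z)$ is already contained in Proposition~\ref{prop O'Grady c_2(X)}; it is also visible directly, since every $\mu_{i,i}$ is even by Table~\ref{tabella mu}, so all coefficients in the formula are integers while $\q_1(1)\q_1(x)|0\rangle$, $\q_1(\alpha_i)\q_1(\alpha_j)|0\rangle$ and $\q_1(\alpha_i)^2|0\rangle$ are integral classes.

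For the indivisibility I would rewrite $\tfrac{2}{5}q_X^{\vee}$ in the Qin--Wang $\Z$-basis of $H^4(X,\Z)$ of Theorem~\ref{thm Qin--Wang}. Using $\m_{1,1}(\alpha_i)|0\rangle=\tfrac{1}{2}(\q_1(\alpha_i)^2-\q_2(\alpha_i))|0\rangle$ one replaces $\tfrac{1}{2}\q_1(\alpha_i)^2|0\rangle$ by $\m_{1,1}(\alpha_i)|0\rangle+\tfrac{1}{2}\q_2(\alpha_i)|0\rangle$, and since $\tfrac{1}{2}\mu_{i,i}\in\Z$ this yields integer coordinates for $\tfrac{2}{5}q_X^{\vee}$ with respect to that basis. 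The coordinate along $\q_1(\alpha_{17})\q_1(\alpha_{18})|0\rangle$ is $\mu_{17,18}=1$ by Table~\ref{tabella mu}, so $\tfrac{2}{5}q_X^{\vee}$ is primitive in $H^4(X,\Z)$; since $H^{2,2}(X,\Z)\subseteq H^4(X,\Z)$, a class of $H^4(X,\Z)$ primitive there is a fortiori indivisible in $H^{2,2}(X,\Z)$.

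For the last displayed identity, Remark~\ref{rmk delta and other line bundles in terms of Nakajima op} gives $\delta=\tfrac{1}{2}\q_2(1)|0\rangle$, so Lemma~\ref{lem cup products Lehn-Sorger 3} reads
\[
\delta^2=-\sum_{i<j}\mu_{i,j}\q_1(\alpha_i)\q_1(\alpha_j)|0\rangle-\frac{1}{2}\sum_i\mu_{i,i}\q_1(\alpha_i)^2|0\rangle-\q_1(1)\q_1(x)|0\rangle .
\]
Adding this to~(\ref{eq 2/5qvee con op Nakajima}), the $\q_1(\alpha_i)\q_1(\alpha_j)|0\rangle$ and $\q_1(\alpha_i)^2|0\rangle$ terms cancel and the $\q_1(1)\q_1(x)|0\rangle$ terms combine with coefficient $9-1=8$, giving $\delta^2+\tfrac{2}{5}q_X^{\vee}=8\,\q_1(1)\q_1(x)|0\rangle$; dividing by $8$ yields the claim. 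The class $\q_1(1)\q_1(x)|0\rangle$ lies in $H^{2,2}(X,\Z)$ because it is a Qin--Wang basis vector of $H^4(X,\Z)$ and it is of Hodge type $(2,2)$ (it is the image of $1\otimes x+x\otimes 1$, of shifted bidegree $(0,0)$).

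I do not expect a real obstacle: once Proposition~\ref{prop c2(X) con operatori Nakajima} is available, everything reduces to elementary manipulations. The one place that needs a little care is the indivisibility step --- checking that the rewriting via $\m_{1,1}(\alpha_i)$ keeps all coordinates integral (this uses evenness of the $\mu_{i,i}$) and spotting the unit coordinate $\mu_{17,18}=1$ --- after which primitivity in $H^4(X,\Z)$, hence indivisibility in $H^{2,2}(X,\Z)$, is immediate.
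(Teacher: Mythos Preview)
Your proposal is correct and follows essentially the same route as the paper: deduce~(\ref{eq 2/5qvee con op Nakajima}) from $\tfrac{6}{5}q_X^{\vee}=c_2(X)$ and Proposition~\ref{prop c2(X) con operatori Nakajima}, add the expression for $\delta^2$ from Lemma~\ref{lem cup products Lehn-Sorger 3} to get $\delta^2+\tfrac{2}{5}q_X^{\vee}=8\,\q_1(1)\q_1(x)|0\rangle$, and check indivisibility via the Qin--Wang basis. Your indivisibility argument is in fact slightly cleaner than the paper's (which just says to find a $\mu_{i,j}$ coprime to $9$): you observe directly that the coordinate along the basis vector $\q_1(\alpha_{17})\q_1(\alpha_{18})|0\rangle$ equals $\mu_{17,18}=1$, which settles primitivity at once.
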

\begin{proof}
	By Proposition $\ref{prop O'Grady c_2(X)}$ we have $\frac{6}{5}q_X^{\vee}=c_2(X)$, so Proposition $\ref{prop c2(X) con operatori Nakajima}$ implies $(\ref{eq 2/5qvee con op Nakajima})$. Moreover, taking the basis of $H^4(X, \Z)$ given by Theorem $\ref{thm Qin--Wang}$, we see that $\frac{2}{5}q_X^{\vee}$ is indivisible in $H^4(X, \Z)$, i.e., there is no $\alpha \in H^4(X, \Z)$ such that $n\alpha=\frac{2}{5}q_X^{\vee}$ for some integer $n \in \Z_{>1}$: by Theorem \ref{thm Qin--Wang} it suffices to find some $\mu_{i,j}$ which are coprime with $9$, the coefficient of $\q_1(1)\q_1(x)|0\rangle$ in $(\ref{eq 2/5qvee con op Nakajima})$, in Table $\ref{tabella mu}$.
	This implies that $\frac{2}{5}q_X^{\vee}$ is indivisible also in $H^{2,2}(X, \Z)$. Recall that by Lemma $\ref{lem cup products Lehn-Sorger}$ we have
	\begin{equation} \label{eq delta 2 nella dim}
		\delta^2=-\displaystyle\sum_{i<j}\mu_{i,j}\q_1(\alpha_i)\q_1(\alpha_j)|0\rangle -\frac{1}{2}\displaystyle\sum_i \mu_{i,i}\q_1(\alpha_i)^2|0\rangle - \q_1(1)\q_1(x)|0\rangle,
	\end{equation}
	thus from $(\ref{eq 2/5qvee con op Nakajima})$ and $(\ref{eq delta 2 nella dim})$ we obtain $\delta^2+\frac{2}{5}q_X^{\vee}=8\q_1(1)\q_1(x)|0\rangle \in H^{2,2}(X, \Z)$, which implies
	\begin{equation*}
		\frac{1}{8}\left( \delta^2+\frac{2}{5}q_X^{\vee}\right) =\q_1(1)\q_1(x)|0\rangle \in H^{2,2}(X, \Z).
	\end{equation*}
\end{proof}
Note that the element $\frac{1}{8}\left( \delta^2+\frac{2}{5}q_X^{\vee}\right)$ appears also in \cite[Lemma 4.3]{shen2016hyperkahler}. It is possible to verify that Theorem\,\,$\ref{thm 2/5q con op Nakajima}$ is consistent with the computation of $\ch(S^{[2]})$ given by the \textsc{Maude} program in \cite[$\S 11$]{boissiere2007generating}, based on results obtained by Boissière in \cite{boissiere2005chern}, see \cite[$\S 3.4.3$]{novario2021ths} for details.
\begin{rmk} \label{rmk odd lattice}
	If $S$ is a projective K3 surface, the lattice $H^{2,2}(S^{[2]}, \Z)$ is always an odd lattice: this follows from the product $\langle \frac{1}{8}\left(\delta^2+\frac{2}{5}q_X^{\vee}\right), \frac{1}{8}\left(\delta^2+\frac{2}{5}q_X^{\vee}\right) \rangle = 1$.
\end{rmk}
We can now pass to the main result of this paper. Let $S$ be a general projective K3 surface. In Theorem \ref{thm dim H2,2(X, Q) per K3 qualsiasi} we have given a basis of the vector space $H^{2,2}(S^{[2]}, \Q)$ for a general projective K3 surface $S$. Then in Theorem \ref{thm H^22(X, Z) prima versione} we have described a basis of the lattice $H^{2,2}(S^{[2]}, \Z)$ when $S$ is generic, i.e., general with Picard group of rank $r=1$. We now present a basis of the lattice $H^{2,2}(S^{[2]}, \Z)$ for \emph{any general} projective K3 surface $S$ with Picard group of rank $r$, where $1 \le r \le 19$, since $h^{1,1}(S)=20$ for a K3 surface and, as remarked in Section \ref{intro}, a K3 surface of Picard rank $20$ is not general. We will give both a basis in terms of Nakajima operators and a basis which does not depend on Nakajima operators. In the particular case of general K3 surfaces, the next theorem will prove the conjecture in Section \ref{section second Chern}. We will use results obtained in Section \ref{section second Chern} and Theorem \ref{thm 2/5q con op Nakajima}. 
\begin{thm} \label{thm H^2,2(X, Z) generico}
	Let $S$ be a general projective K3 surface and let $\{b_1, \dots , b_r\}$ be a basis of $\Pic(S)$. Then:
	\begin{enumerate}[label=(\roman*)]
		\item $\rk(H^{2,2}(S^{[2]}, \Z))=\frac{(r+1)r}{2}+r+2$.
		\item The lattice $H^{2,2}(S^{[2]}, \Z)$ is odd and a basis is given by the following elements:
		\begin{itemize}
			\item $\q_2(b_i)|0\rangle$, for $i=1, \dots , r$,
			\item $\q_1(1)\q_1(x)|0\rangle$, where $1 \in H^0(S, \Z)$ is the unit and $x \in H^4(S, \Z)$ is the class of a point.
			\item $\frac{1}{2}\left( \q_1(b_i)^2-\q_2(b_i)\right)|0\rangle$, for $i=1, \dots, r$,
			\item $\q_1(b_i)\q_1(b_j)|0\rangle$, for $1 \le i < j \le r$,
			\item $-\displaystyle\sum_{i<j}\mu_{i,j}\q_1(\alpha_i)\q_1(\alpha_j)|0\rangle-\frac{1}{2}\displaystyle\sum_i\mu_{i,i}\q_1(\alpha_i)^2|0\rangle-\q_1(1)\q_1(x)|0\rangle$, where $\{\alpha_1, \dots , \alpha_{22}\}$ is the basis of $H^2(S, \Z)$ used in Lemma $\ref{lem calcolo mu_ij}$ and the $\mu_{i,j}$'s are given in Table $\ref{tabella mu}$.
		\end{itemize}
	Equivalently, the following is a basis of $H^{2,2}(S^{[2]}, \Z)$:
	\begin{equation} \label{eq basis H2,2}
		\left\{b_ib_j, \frac{b_i^2-b_i\delta}{2}, \frac{1}{8} \left( \delta^2 +\frac{2}{5}q_X^{\vee}\right), \delta^2\right\}_{1 \le i \le j \le r.}
	\end{equation}
	\end{enumerate}
In particular, if $S=S_{2t}$ is a generic K3 surface of degree $2t$, and $h \in \Pic(S^{[2]}_{2t})$ is the class induced by the ample generator of $\Pic(S_{2t})$, then
\begin{equation} \label{eq H^2,2 generic K3}
	H^{2,2}(S^{[2]}_{2t}, \Z)=\Z h^2 \oplus \Z \frac{h^2-h\delta}{2} \oplus \Z \frac{1}{8}\left( \delta^2+\frac{2}{5}q_X^{\vee}\right) \oplus \Z \delta^2.
\end{equation}
Moreover, $\textnormal{disc}(H^{2,2}(S^{[2]}_{2t}, \Z))=84t^3$ and the Gram matrix in the basis given above is the following:
\begin{equation*}
	\begin{pmatrix}
		12t^2 & 6t^2 & 2t & -4t \\
		6t^2 & t(3t-1) & t & -2t \\
		2t & t & 1 & -1 \\
		-4t & -2t & -1 & 12
	\end{pmatrix}.
\end{equation*}
\end{thm}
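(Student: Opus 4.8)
The strategy is to prove the statement for the Nakajima-operator basis $\mathcal{B}_0$ appearing in Theorem~\ref{thm dim H2,2(X, Q) per K3 qualsiasi}(ii), and then to transfer it to the geometric basis~(\ref{eq basis H2,2}). Part (i) is Theorem~\ref{thm dim H2,2(X, Q) per K3 qualsiasi}(i), and the oddness of $H^{2,2}(S^{[2]},\Z)$ is Remark~\ref{rmk odd lattice}. The transfer is formal: by Remark~\ref{rmk delta and other line bundles in terms of Nakajima op} and Lemma~\ref{lem cup products Lehn-Sorger} one has $b_i\delta=\q_2(b_i)|0\rangle$, $\,b_ib_j=(b_i\cdot b_j)\,\q_1(1)\q_1(x)|0\rangle+\q_1(b_i)\q_1(b_j)|0\rangle$ and $\,\tfrac{b_i^2-b_i\delta}{2}=\tfrac{b_i\cdot b_i}{2}\,\q_1(1)\q_1(x)|0\rangle+\m_{1,1}(b_i)|0\rangle$ (integral because $H^2(S,\Z)$ is even), while $\tfrac18(\delta^2+\tfrac25 q_X^{\vee})=\q_1(1)\q_1(x)|0\rangle$ by Theorem~\ref{thm 2/5q con op Nakajima}; these identities express~(\ref{eq basis H2,2}) in terms of $\mathcal{B}_0$ by a triangular integral matrix with $1$'s on the diagonal, so it is enough to prove that $\mathcal{B}_0$ is a $\Z$-basis of $H^{2,2}(S^{[2]},\Z)$.

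Each element of $\mathcal{B}_0$ is a Hodge class of type $(2,2)$ by Theorem~\ref{thm dim H2,2(X, Q) per K3 qualsiasi}. For integrality I would use that $\Pic(S)=\NS(S)$ is a primitive sublattice of $H^2(S,\Z)$ to extend $\{b_1,\dots,b_r\}$ to a $\Z$-basis $\{v_1,\dots,v_{22}\}$ of $H^2(S,\Z)$ (so $v_i=b_i$ for $i\le r$), and then apply Theorem~\ref{thm Qin--Wang}(ii) to it. The resulting $\Z$-basis $\mathcal{B}$ of $H^4(S^{[2]},\Z)$ contains $\q_1(1)\q_1(x)|0\rangle$ together with all $\q_2(b_i)|0\rangle$, $\m_{1,1}(b_i)|0\rangle$ for $i\le r$ and all $\q_1(b_i)\q_1(b_j)|0\rangle$ for $i<j\le r$; hence $\mathcal{S}:=\mathcal{B}_0\setminus\{\delta^2\}\subseteq\mathcal{B}$, and $\delta^2\in H^4(S^{[2]},\Z)$ is clear. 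What remains is that $\mathcal{B}_0$ spans $H^{2,2}(S^{[2]},\Z)$.

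This is the crux. Since $\mathcal{B}_0$ spans $H^{2,2}(S^{[2]},\Q)$ with the rank of $H^{2,2}(S^{[2]},\Z)$, and $H^{2,2}(S^{[2]},\Z)=H^4(S^{[2]},\Z)\cap H^{2,2}(S^{[2]},\Q)$, it suffices that $\mathcal{B}_0$ generate a saturated sublattice of $H^4(S^{[2]},\Z)$; as $\Z\langle\mathcal{S}\rangle$ is saturated with free quotient $H^4(S^{[2]},\Z)/\Z\langle\mathcal{S}\rangle\cong\Z\langle\mathcal{N}\rangle$ (where $\mathcal{N}:=\mathcal{B}\setminus\mathcal{S}$), this reduces to showing that the image $\overline{\delta^2}$ of $\delta^2$ in $\Z\langle\mathcal{N}\rangle$ is primitive. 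The computation of Lemmas~\ref{lem calcolo mu_ij} and~\ref{lem cup products Lehn-Sorger}(iii), performed with the basis $\{v_1,\dots,v_{22}\}$, gives $\delta^2=-\q_1(1)\q_1(x)|0\rangle-\sum_{k<l}\nu_{kl}\q_1(v_k)\q_1(v_l)|0\rangle-\tfrac12\sum_k\nu_{kk}\q_1(v_k)^2|0\rangle$, where $(\nu_{kl})=(G')^{-1}$ and $G'$ is the Gram matrix of $\{v_1,\dots,v_{22}\}$; because $H^2(S,\Z)$ is unimodular $(\nu_{kl})$ is integral, and because it is the Gram matrix of the dual basis (which lies in the even lattice $H^2(S,\Z)$) its diagonal is even. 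Substituting $\q_1(v_k)^2|0\rangle=2\m_{1,1}(v_k)|0\rangle+\q_2(v_k)|0\rangle$ and collecting the terms supported on $\mathcal{N}$ (the $\q_2(v_j)|0\rangle$ and $\m_{1,1}(v_j)|0\rangle$ with $j>r$, and the $\q_1(v_k)\q_1(v_l)|0\rangle$ with $\{k,l\}\not\subseteq\{1,\dots,r\}$), the coordinates of $\overline{\delta^2}$ are, up to sign, the $\nu_{kl}$ with $\{k,l\}\not\subseteq\{1,\dots,r\}$ together with the integers $\tfrac12\nu_{jj}$ for $j>r$. If a prime $p$ divided all of these, every entry of the last $22-r$ rows of the symmetric integral matrix $(\nu_{kl})$ would be divisible by $p$ (for $p=2$ one uses $4\mid\nu_{jj}$), whence $p^{22-r}\mid\det(\nu_{kl})=\pm1$ — impossible, since $22-r\ge3$. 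So $\overline{\delta^2}$ is primitive, $\mathcal{B}_0$ extends to a $\Z$-basis of $H^4(S^{[2]},\Z)$, and therefore $\mathcal{B}_0$, hence~(\ref{eq basis H2,2}), is a $\Z$-basis of $H^{2,2}(S^{[2]},\Z)$; at $r=1$ this recovers Theorem~\ref{thm H^22(X, Z) prima versione}.

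For a generic K3 surface $S_{2t}$ of degree $2t$ one has $r=1$, $b_1=h$, and~(\ref{eq H^2,2 generic K3}) is~(\ref{eq basis H2,2}) made explicit. I would compute the Gram matrix in the basis $h^2,\ \tfrac{h^2-h\delta}{2},\ \tfrac18(\delta^2+\tfrac25q_X^{\vee}),\ \delta^2$ by expanding each vector in $\Sym^2H^2(S^{[2]},\Q)$ and using Proposition~\ref{prop rmk 2.1 OG} for products of classes coming from $H^2$ and Proposition~\ref{prop O'Grady int q_Xvee} for products involving $q_X^{\vee}$, with $q_X(h)=2t$, $q_X(\delta)=-2$, $q_X(h,\delta)=0$, $\langle q_X^{\vee},q_X^{\vee}\rangle=23\cdot25$ and $\langle\tfrac18(\delta^2+\tfrac25q_X^{\vee}),\tfrac18(\delta^2+\tfrac25q_X^{\vee})\rangle=1$; this produces the stated matrix, whose determinant is $84t^3$. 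The genuinely non-formal ingredient is the primitivity statement of the previous paragraph, and it is there that the unimodularity and evenness of $H^2(S,\Z)$ enter essentially.
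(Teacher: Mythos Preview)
Your argument is correct and cleaner at the crucial step than the paper's. Both proofs share the same scaffolding: part~(i) and oddness come from Theorem~\ref{thm dim H2,2(X, Q) per K3 qualsiasi} and Remark~\ref{rmk odd lattice}; one extends $\{b_1,\dots,b_r\}$ to a full $\Z$-basis of $H^2(S,\Z)$, invokes the Qin--Wang basis, and observes that all of $\mathcal{B}_0\setminus\{\delta^2\}$ already sits inside it; the transfer to the geometric basis~(\ref{eq basis H2,2}) and the rank-$1$ Gram matrix are handled identically.

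The divergence is in how one shows that adjoining $\delta^2$ to $\Z\langle\mathcal{S}\rangle$ yields a saturated sublattice. The paper argues indirectly: it uses the pairing $\langle\delta^2,xy\rangle$ with $x\in T(S)$ and $\int_Sxy=1$ (via Proposition~\ref{prop rmk 2.1 OG} and Theorem~\ref{thm 2/5q con op Nakajima}) to produce a nonzero off-diagonal coefficient $\sigma_{l,k}$ with $k>r$, then swaps $\q_1(b_l)\q_1(b_k)|0\rangle$ for $\delta^2$ to build an auxiliary lattice $L$, and finally runs a second pairing argument to show $L\cap H^{2,2}(S^{[2]},\Q)=H^{2,2}(S^{[2]},\Z)$. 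You bypass all of this by noting that the coefficient matrix $(\nu_{kl})$ is the inverse of a unimodular Gram matrix, hence unimodular itself, so a common prime divisor of all entries in its last $22-r\ge3$ rows would force $p^{22-r}\mid\det(\nu_{kl})=\pm1$. This is pure lattice-theoretic linear algebra and avoids the BBF-form computations the paper needs for this step; it also makes transparent why the evenness and unimodularity of $H^2(S,\Z)$ are exactly the inputs required. The paper's approach, on the other hand, isolates a single explicit ``bad'' basis vector $\q_1(b_l)\q_1(b_k)|0\rangle$ and shows it is not a Hodge class, which is a bit more geometric in flavour but also more laborious.
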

\begin{proof}
	By Theorem $\ref{thm dim H2,2(X, Q) per K3 qualsiasi}$ we have $\Dim(H^{2,2}(S^{[2]}, \Q))=\frac{(r+1)r}{2}+r+2$. Since by \cite[Theorem 1]{markman2007integral} the cohomology groups $H^i(S^{[2]}, \Z)$ are torsion free for $i \ge 0$, we obtain $\rk(H^{2,2}(S^{[2]}, \Z))=\frac{(r+1)r}{2}+r+2$. Remark \ref{rmk odd lattice} shows that $H^{2,2}(S^{[2]}, \Z)$ is an odd lattice. After a slight modification of the basis given in Theorem $\ref{thm dim H2,2(X, Q) per K3 qualsiasi}$, we have that the following is a basis of $H^{2,2}(S^{[2]}, \Q)$:
	\begin{equation} \label{base tesi}
		\begin{array}{l}
			\q_2(b_i)|0\rangle\,\,\,\text{for}\,\,i=1, \dots , r \\[1ex]
			\q_1(1)\q_1(x)|0\rangle \\[1ex]
			\frac{1}{2}\left( \q_1(b_i)^2-\q_2(b_i)\right) |0\rangle \,\,\,\text{for}\,\,i=1, \dots , r \\[1ex]
			\q_1(b_i)\q_1(b_j)|0\rangle\,\,\,\text{for}\,\,1\le i<j\le r \\[1ex]
			-\displaystyle\sum_{i<j}\mu_{i,j}\q_1(\alpha_i)\q_1(\alpha_j)|0\rangle - \frac{1}{2}\displaystyle\sum_i \mu_{i,i} \q_1(\alpha_i)^2|0\rangle-\q_1(1)\q_1(x)|0\rangle.
		\end{array}
	\end{equation}
	The strategy of the proof is the following: we look for a sublattice $L$ of $H^4(S^{[2]}, \Z)$ of maximal rank such that $L \cap H^{2,2}(S^{[2]}, \Q)=H^{2,2}(S^{[2]}, \Z)$ and such that a basis of $L$ contains the elements in $(\ref{base tesi})$. Since the Picard group $\Pic(S)$ of $S$ can be primitively embedded in $H^2(S, \Z)$, there exists a basis of $H^2(S, \Z)$ of the form $\{b_1, \dots, b_r, b_{r+1}, \dots , b_{22}\}$ for some $b_{r+1}, \dots , b_{22} \in H^2(S, \Z)$. By Theorem $\ref{thm Qin--Wang}$ the following is a basis of $H^4(S^{[2]}, \Z)$:
	\begin{equation} \label{base QW b_i}
			\left\{\q_1(1)\q_1(x)|0\rangle, \q_2(b_i)|0\rangle, \q_1(b_i)\q_1(b_j)|0\rangle, \frac{1}{2}\left( \q_1(b_i)^2-\q_2(b_i)\right) |0\rangle\right\},
	\end{equation}
	where $i, j \in \{1, \dots , 22\}$ and $i<j$. Recall that by Lemma \ref{lem cup products Lehn-Sorger} we have
	\begin{equation} \label{eq wertyuiop}
		\delta^2=-\displaystyle\sum_{i<j}\mu_{i,j}\q_1(\alpha_i)\q_1(\alpha_j)|0\rangle -\frac{1}{2}\displaystyle\sum_i \mu_{i,i}\q_1(\alpha_i)^2|0\rangle -\q_1(1)\q_1(x)|0\rangle,
	\end{equation}
	where $\{\alpha_1, \dots, \alpha_{22}\}$ is the basis of the lattice $H^2(S, \Z)$ used in Lemma \ref{lem calcolo mu_ij} and the $\mu_{i,j}$'s are the integers given in Table \ref{tabella mu}. Using the same procedure of Lemma $\ref{lem calcolo mu_ij}$ and Lemma $\ref{lem cup products Lehn-Sorger}$ with the basis $\{b_1, \dots, b_{22}\}$, we obtain
	\begin{equation} \label{eq delta^2 triangolo}
		\delta^2=-\displaystyle\sum_{i<j}\sigma_{i,j}\q_1(b_i)\q_1(b_j)|0\rangle -\frac{1}{2}\displaystyle\sum_i \sigma_{i,i} \q_1(b_i)^2|0\rangle -\q_1(1)\q_1(x)|0\rangle
	\end{equation}
	for some integers $\sigma_{i, j}$. Then the $\mu_{i,j}$'s are associated to the basis $\{\alpha_1, \dots , \alpha_{22}\}$ by the description of $\delta^2$ in (\ref{eq wertyuiop}), and the $\sigma_{i,j}$'s are associated to the basis $\{b_1, \dots , b_{22}\}$ by the description of $\delta^2$ in (\ref{eq delta^2 triangolo}). 
	
	We show that there exist positive integers $l$ and $k$ with $l<k$ and $k \ge r+1$ such that $\sigma_{l, k} \neq 0$. Suppose by contradiction that $\sigma_{i,j}=0$ for every $(i, j)$ such that $i<j$ and $j \ge r+1$. Then $(\ref{eq delta^2 triangolo})$ becomes
	\begin{equation} \label{delta^2 eq tilde b_i}
		\delta^2=-\displaystyle\sum_{1 \le i < j \le r}\sigma_{i,j} \q_1(b_i)\q_1(b_j)|0\rangle-\frac{1}{2}\displaystyle\sum_{i=1}^{22} \sigma_{i, i} \q_1(b_i)^2|0\rangle -\q_1(1)\q_1(x)|0\rangle.
	\end{equation}
	Consider the transcendental lattice $T(S) \cong \NS(S)^{\perp}$. Let $x \in T(S)$. Since $H^2(S, \Z)$ is a unimodular lattice, there exists $y \in H^2(S, \Z)$ such that $\int_S xy=1$. By Proposition $\ref{prop rmk 2.1 OG}$ we have 
	\begin{equation} \label{eq 1 proof H^22 Z}
		\langle \delta^2, xy \rangle =-2.
	\end{equation}
	By $(\ref{delta^2 eq tilde b_i})$, Lemma $\ref{lem cup products Lehn-Sorger}$ and Theorem $\ref{eq 2/5qvee con op Nakajima}$ we have
	\begin{equation} \label{hhjjkk}
		\begin{array}{lll}
			\langle \delta^2, xy \rangle & = & \langle -\displaystyle\sum_{1\le i < j \le r}\sigma_{i, j}\left[ b_ib_j-\left( \displaystyle\int_Sb_ib_j\right)\left(\frac{1}{8}\delta^2+\frac{1}{20}q_X^{\vee}\right)\right] \\[1.3em]
			& & -\frac{1}{2}\displaystyle\sum_{i=1}^{22}\sigma_{i,i}\left[ b_i^2-\left(\displaystyle\int_Sb_i^2\right)\left( \frac{1}{8}\delta^2+\frac{1}{20}q_X^{\vee}\right)\right] \\[1.2em]
			& & -\left(\frac{1}{8}\delta^2+\frac{1}{20}q_X^{\vee}\right), xy \rangle.
		\end{array}
	\end{equation}
	Since $x \in T(S)$ we have $\int_S b_ix=0$ for $i=1, \dots , r$, hence the right-hand side of (\ref{hhjjkk}) is equal to
	\begin{equation*}
		\begin{array}{l}
			-\displaystyle\sum_{1 \le i < j \le r} \sigma_{i, j} \left[\displaystyle\int_Sb_ib_j+\frac{1}{4}\displaystyle\int_Sb_ib_j-\frac{5}{4}\displaystyle\int_Sb_ib_j\right] \\[1.3em]
			-\frac{1}{2}\displaystyle\sum_{i=1}^r\sigma_{i, i}\left[\displaystyle\int_Sb_i^2+\frac{1}{4}\displaystyle\int_Sb_i^2-\frac{5}{4}\displaystyle\int_Sb_i^2\right] \\[1.3em]
			-\frac{1}{2}\displaystyle\sum_{i=r+1}^{22}\sigma_{i,i}\left[\displaystyle\int_Sb_i^2+2\displaystyle\int_Sb_ix\displaystyle\int_Sb_iy+\frac{1}{4}\displaystyle\int_Sb_i^2-\frac{5}{4}\displaystyle\int_Sb_i^2\right].
		\end{array}
	\end{equation*}
	Note that $\int_Sb_ib_j+\frac{1}{4}\int_Sb_ib_j-\frac{5}{4}\int_Sb_ib_j=0$ and $\int_Sb_i^2+\frac{1}{4}\int_Sb_i^2-\frac{5}{4}\int_Sb_i^2=0$,
%	\begin{equation*}
%		\displaystyle\int_Sb_ib_j+\frac{1}{4}\displaystyle\int_Sb_ib_j-\frac{5}{4}\displaystyle\int_Sb_ib_j=0, \qquad \displaystyle\int_Sb_i^2+\frac{1}{4}\displaystyle\int_Sb_i^2-\frac{5}{4}\displaystyle\int_Sb_i^2=0,
%	\end{equation*}
	hence we finally obtain
	\begin{equation} \label{eq 2 proof H^22 Z}
		\langle \delta^2, xy \rangle = -\displaystyle\sum_{i=r+1}^{22}\sigma_{i,i} \displaystyle\int_Sb_ix\displaystyle\int_Sb_iy-1.
	\end{equation}
	Thus $(\ref{eq 1 proof H^22 Z})$ and $(\ref{eq 2 proof H^22 Z})$ imply 
	\begin{equation} \label{eq 3 proof H^22 Z}
		\displaystyle\sum_{i=r+1}^{22}\sigma_{i,i}\displaystyle\int_Sb_ix\displaystyle\int_Sb_iy=1.
	\end{equation}
	The $\sigma_{i,i}$'s are all even, otherwise by Theorem $\ref{thm Qin--Wang}$ the element $\delta^2$ in $(\ref{eq delta^2 triangolo})$ would not be integral, since a basis of $H^4(S^{[2]}, \Z)$ is given by (\ref{base QW b_i}). Hence the left-hand side of $(\ref{eq 3 proof H^22 Z})$ is even, so it cannot be equal to $1$. We get a contradiction, so there exist $l, k$ positive integers with $l < k$ and $k \ge r+1$ such that $\sigma_{l, k} \neq 0$. Let now $L$ be the sublattice of $H^4(S^{[2]}, \Z)$ with the following basis:
	\begin{equation} \label{basis eq proof H^2,2 Z doppio triangolo}
		\begin{array}{ll}
			(\text{i}) & \q_1(1)\q_1(x)|0\rangle, \\[1ex]
			(\text{ii}) & \q_1(b_i)\q_1(b_j)|0\rangle \,\,\,\text{with}\,\,1 \le i < j \le 22 \,\,\,\text{and}\,\,(i, j) \neq (l, k), \\[1ex]
			(\text{iii}) & \delta^2=-\displaystyle\sum_{i<j}\sigma_{i,j}\q_1(b_i)\q_1(b_j)|0\rangle -\frac{1}{2}\displaystyle\sum_i \sigma_{i,i}\q_1(b_i)^2|0\rangle - \q_1(1)\q_1(x)|0\rangle, \\[1.3em]
			(\text{iv}) & \q_2(b_i)|0\rangle\,\,\, \text{for}\,\,i=1, \dots , 22, \\[1ex]
			(\text{v}) & \frac{1}{2}\left( \q_1(b_i)^2-\q_2(b_i)\right)|0\rangle \,\,\,\text{for}\,\,i=1, \dots , 22.
		\end{array}
	\end{equation}
	Recall that by (\ref{eq delta^2 triangolo}) the element in (iii) is also equal to
	\begin{equation*}
		-\displaystyle\sum_{i<j}\mu_{i,j}\q_1(\alpha_i)\q_1(\alpha_j)|0\rangle -\frac{1}{2}\displaystyle\sum_i \mu_{i,i}\q_1(\alpha_i)^2|0\rangle - \q_1(1)\q_1(x)|0\rangle.
	\end{equation*}
	Since $\sigma_{l, k} \neq 0$, the elements in $(\ref{basis eq proof H^2,2 Z doppio triangolo})$ give a basis for $H^4(S^{[2]}, \Q)$, thus $L$ is a sublattice of $H^4(S^{[2]}, \Z)$ of maximal rank. If $\sigma_{l,k}=\pm 1$, then $\q_1(b_l)\q_1(b_k)|0\rangle$ can be obtained as an integral linear combination of elements in $(\ref{basis eq proof H^2,2 Z doppio triangolo})$, hence every element in the basis $(\ref{base QW b_i})$ of $H^4(S^{[2]}, \Z)$ is in $L$, so $L=H^4(S^{[2]}, \Z)$ and (\ref{base tesi}) is a basis of $H^{2,2}(S^{[2]}, \Z)$: that is what happens in the proof of Theorem \ref{thm H^22(X, Z) prima versione}. If $\sigma_{l, k} \neq \pm 1$, then $L \neq H^4(S^{[2]}, \Z)$. More precisely, we have
	\begin{equation} \label{eq H4(X, Z)/L}
		\frac{H^4(S^{[2]}, \Z)}{L}\cong \frac{\Z}{|\sigma_{l, k}|\Z}
	\end{equation}
	generated by $\q_1(b_l)\q_1(b_k)|0\rangle$. We show that
	\begin{equation*}
		L \cap H^{2,2}(S^{[2]}, \Q)=H^{2,2}(S^{[2]}, \Z).
	\end{equation*}
	The inclusion $L \cap H^{2,2}(S^{[2]}, \Q)\subseteq H^{2,2}(S^{[2]}, \Z)$ is clear. We now prove the inclusion $L \cap H^{2,2}(S^{[2]}, \Q) \supseteq H^{2,2}(S^{[2]}, \Z)$ by showing that if $z \not \in L \cap H^{2,2}(S^{[2]}, \Q)$ then $z \not \in H^{2,2}(S^{[2]}, \Z)$. If $z \not \in H^{2,2}(S^{[2]}, \Q)$ we are done. Suppose now that $z \not \in L$. Clearly we have $H^4(S^{[2]}, \Z) \cap H^{2,2}(S^{[2]}, \Q) = H^{2,2}(S^{[2]}, \Z)$. Since the quotient in (\ref{eq H4(X, Z)/L}) is generated by $\q_1(b_l)\q_1(b_k)|0\rangle$, it suffices to show that $\q_1(b_l)\q_1(b_k)|0\rangle \not \in H^{2,2}(S^{[2]}, \Q)$ to get the inclusion. Suppose by contradiction that $\q_1(b_l)\q_1(b_k)|0\rangle \in H^{2,2}(S^{[2]}, \Q)$. Hence $q_1(b_l)\q_1(b_k)|0\rangle$ is a rational linear combination of elements in (\ref{base tesi}). Recall that the last element in (\ref{base tesi}) can be written as
	\begin{equation*}
		\delta^2=-\displaystyle\sum_{i<j}\sigma_{i,j} \q_1(b_i)\q_1(b_j)|0\rangle-\frac{1}{2}\displaystyle\sum_{i} \sigma_{i, i} \q_1(b_i)^2|0\rangle -\q_1(1)\q_1(x)|0\rangle
	\end{equation*}
	Since $\q_1(b_l)\q_1(b_k)|0\rangle$ appears only in $\delta^2$ among the elements in (\ref{base tesi}), we see that $\q_1(b_l)\q_1(b_k)|0\rangle \in H^{2,2}(S^{[2]}, \Q)$ only if
	\begin{equation} \label{eq cond sui sigma}
		\sigma_{i,j}=0\,\,\text{for}\,\,i\le j, j \ge r+1 \,\,\text{and}\,\,(i, j) \neq (l, k).
	\end{equation}
	Let again $x \in T(S)$ and $y \in H^2(S, \Z)$ such that $\int_S xy=1$. Similarly to $(\ref{eq 2 proof H^22 Z})$ we have
	\begin{equation} \label{eq cond sui sigma parte 2}
		\langle \delta^2, xy \rangle= -\displaystyle\sum\limits_{\substack{i<j \\ j \ge r+1}}\sigma_{i,j} \left( \displaystyle\int_S b_ix \displaystyle\int_S b_jy+\displaystyle\int_S b_jx\displaystyle\int_S b_iy \right)-\frac{1}{2}\displaystyle\sum_{i=r+1}^{22}\sigma_{i,i}\left( 2\displaystyle\int_S b_ix \displaystyle\int_S b_iy \right) -1,
	\end{equation}
	which implies by $(\ref{eq 1 proof H^22 Z})$ the following:
	\begin{equation*} 
		\displaystyle\sum\limits_{\substack{i<j \\ j \ge r+1}} \sigma_{i, j}\left(\displaystyle\int_S b_ix \displaystyle\int_S b_jy+\displaystyle\int_S b_jx\displaystyle\int_S b_iy \right)+\displaystyle\sum_{i=r+1}^{22}\sigma_{i,i}\left( \displaystyle\int_S b_ix \displaystyle\int_S b_iy \right)=1.
	\end{equation*}
	Thus we see that (\ref{eq cond sui sigma}) is not true, otherwise (\ref{eq cond sui sigma parte 2}) becomes $\sigma_{l, k}\left(\int_S b_lx\int_S b_ky + \int_S b_kx  \int_S b_l y \right) =1$, which is false since by assumption $\sigma_{l, k} \neq \pm 1$. Hence there exists $(\alpha, \beta) \neq (l, k)$ with $\alpha \le \beta$ and $\beta \ge r+1$ such that $\sigma_{\alpha, \beta} \neq 0$. As remarked above, this shows that $\q_1(b_l)\q_1(b_k)|0\rangle \not \in H^{2,2}(S^{[2]}, \Q)$. Then
	\begin{equation*}
		L \cap H^{2,2}(S^{[2]}, \Q)=H^{2,2}(S^{[2]}, \Z).
	\end{equation*}
	We conclude that (\ref{base tesi}) is a basis of $H^{2,2}(S^{[2]}, \Z)$. Using Remark \ref{rmk delta and other line bundles in terms of Nakajima op}, Lemma \ref{lem cup products Lehn-Sorger} and Theorem \ref{thm 2/5q con op Nakajima}, the elements in (\ref{base tesi}) can be written as:
	\begin{equation*}
		\begin{array}{l}
			b_i\delta \qquad \text{for}\,\,i=1, \dots , r, \\[1ex]
			\frac{1}{8}\left( \delta^2 +\frac{2}{5}q_X^{\vee} \right), \\[1ex]
			\frac{1}{2}\left( b_i^2 -\displaystyle\int_S b_i^2 \cdot \frac{1}{8}\left( \delta^2 +\frac{2}{5}q_X^{\vee} \right) -b_i\delta\right) \qquad \text{for}\,\,i=1, \dots , r, \\[1.2em]
			b_ib_j-\displaystyle\int_S b_ib_j \cdot \frac{1}{8}\left( \delta^2+\frac{2}{5}q_X^{\vee} \right) \qquad \text{for}\,\,1 \le i < j \le r, \\[1ex]
			\delta^2.
		\end{array}
	\end{equation*}
We write $b_i$ both for the element in $\Pic(S)$ and for the line bundle that this induces on $\Pic(S^{[2]})$. By Remark \ref{rmk delta and other line bundles in terms of Nakajima op} we have $b_i=\q_1(1)\q_1(b_i)|0\rangle$ and $\delta=\frac{1}{2}\q_2(1)|0\rangle$ in $H^4(S^{[2]}, \Z)$. Then, starting from the basis of $H^{2,2}(S^{[2]}, \Z)$ found before, using Lemma \ref{lem cup products Lehn-Sorger} one can show that (\ref{eq basis H2,2}) is a basis of $H^{2,2}(S^{[2]}, \Z)$. In particular when $S=S_{2t}$ is a generic K3 surface of degree $2t$ we have (\ref{eq H^2,2 generic K3}): in this case the discriminant of $H^{2,2}(S^{[2]}_{2t}, \Z)$ and the Gram matrix can be computed using Proposition\,\,\ref{prop O'Grady int q_Xvee}.
	\end{proof}

	\newcommand{\etalchar}[1]{$^{#1}$}
	
\end{document}